\newtheorem{thm}[subsubsection]{Theorem}
\newtheorem{lem}[subsubsection]{Lemma}
\newtheorem{prp}[subsubsection]{Proposition}
\newtheorem*{Lem}{Lemma}
\newtheorem*{Prp}{Proposition}
\theoremstyle{definition}
           \newtheorem{dfn}[subsubsection]{Definition}
           \newtheorem{rem}[subsubsection]{Remark}
           \newtheorem*{Rem}{Remark}
\newcommand{\VH}[1]{{\tt{\color{blue}{#1}}}}
\newcommand{\IM}[1]{{\tt{\color{red}{#1}}}}
\newcommand{\act}{\mathrm{act}}
\newcommand{\Aut}{\operatorname{Aut}}
\newcommand{\Cat}{\mathtt{Cat}}
\newcommand{\colim}{\operatorname{colim}}
\newcommand{\Coop}{\mathtt{Coop}}
\newcommand{\CS}{\mathtt{CS}} 
\newcommand{\dSet}{\mathtt{dSet}}
\newcommand{\eq}{\mathrm{eq}}
\newcommand{\Fin}{\mathit{Fin}}
\newcommand{\Fun}{\operatorname{Fun}}
\newcommand{\Hom}{\mathrm{Hom}} 
\newcommand{\Ho}{\operatorname{Ho}}
\newcommand{\shom}{\mathcal{H}\!\mathit{om}} 
\newcommand{\id}{\mathrm{id}}
\newcommand{\inn}{\mathit{in}}
\newcommand{\Left}{\mathtt{Left}}
\newcommand{\Map}{\operatorname{Map}}
\newcommand{\odSet}{\mathtt{odSet}}
\newcommand{\op}{\mathrm{op}}
\newcommand{\Op}{\mathtt{Op}}
\newcommand{\DOp}{\mathtt{DOp}}
\newcommand{\LOp}{\mathtt{LOp}}
\newcommand{\out}{\mathit{out}}
\newcommand{\rlarrows}{\substack{\longrightarrow\\ \longleftarrow}}
\newcommand{\SM}{\mathtt{SM}}
\newcommand{\Seg}{\mathtt{Seg}}
\newcommand{\Set}{\mathtt{Set}}
\newcommand{\sSet}{\mathtt{sSet}}
\newcommand{\Triv}{{\mathcal{T}\mathrm{riv}}}
\newcommand{\Tw}{\mathtt{Tw}}
\newcommand{\wt}{\widetilde}
\newcommand{\Alg}{\mathtt{Alg}}
\newcommand{\bF}{\mathbb{F}}
\newcommand{\cC}{\mathcal{C}}
\newcommand{\ccD}{\mathcal{D}}
\newcommand{\cO}{\mathcal{O}}
\newcommand{\cP}{\mathcal{P}}
\newcommand{\cQ}{\mathcal{Q}}
\newcommand{\cS}{\mathcal{S}}
\newcommand{\cX}{\mathcal{X}}
\newcommand{\F}{\mathbb{F}}
\begin{document}

\title[]{On the equivalence of Lurie's $\infty$-operads and dendroidal 
$\infty$-operads}
\author{Vladimir Hinich}
\address{Department of Mathematics, University of Haifa,
Mount Carmel, Haifa 3498838,  Israel}
\email{hinich@math.haifa.ac.il}
\author{Ieke Moerdijk}
\address{Department of Mathematics, University of Utrecht,
Budapestlaan 8, Utrecht,  Netherlands}
\email{i.moerdijk@uu.nl}

\begin{abstract}
In this paper we prove the equivalence of two symmetric monoidal 
$\infty$-categories of $\infty$-operads, the one defined in 
Lurie~\cite{L.HA} and the one based on dendroidal spaces.
\end{abstract}
\maketitle
	
\section{Introduction}
\subsection{}



In this paper we return to the question of the comparison of various 
notions of $\infty$-operad occurring in the literature. One such notion 
is the one defined by Lurie in terms of simplicial sets over the nerve 
of the category of finite pointed sets, see~\cite{L.HA}, Sect.~2; 
another is the one defined in terms of dendroidal sets, or dendroidal 
spaces~\cite{CM}. Lurie's $\infty$-category and the dendroidal one are 
in fact symmetric monoidal $\infty$-categories where the monoidal structures resemble the Boardman-Vogt tensor product of operads. The $\infty$-categories themselves are underlying 
Quillen model structures. The dendroidal model category has been shown in 
\cite{CM} to be Quillen equivalent to the model category of classical 
simplicial or topological operads.  

A first such comparison was made in~\cite{HHM}, where it was shown that 
if one restricts oneself to operads without constants, the Lurie model 
and the dendroidal one are (Quillen) equivalent at the level of model 
categories. Moreover, this equivalence respects the monoidal structure 
of the associated homotopy categories, which is a shadow of the much 
richer structure of symmetric monoidal $\infty$-category.

In a long paper~\cite{B}, Barwick constructs another $\infty$-category based on his notion of operator category, and   proves this $\infty$-category to be equivalent to Lurie's 
version mentioned above. A next comparison was studied in~\cite{CHH}, where the 
dendroidal model was shown to be equivalent to Barwick's version at the 
level of $\infty$-categories. Combined with Barwick's equivalence, this 
gives a composed equivalence between Lurie's $\infty$-category and the 
dendroidal one, now avoiding the condition on the absence of constants 
of~\cite{HHM}. However, the comparison of~\cite{CHH} does not address 
the question of equivalence as symmetric monoidal $\infty$-categories.

The goal of this paper is to prove a relatively direct and explicit 
equivalence between two symmetric monoidal $\infty$-categories. One is 
the $\infty$-category $\LOp$ (short for ``Lurie operads'') underlying 
Lurie's model category, the other is the $\infty$-category $\DOp$ 
(for ``dendroidal operads'') underlying the dendroidal model category. 
To give the reader a rough idea already at this stage, we remark that 
our proof is based on a functor from level forests to forests, 
denoted

$$\omega: \bF \to\Phi,$$ 
see~\ref{sss:Phi} and \ref{ss:main} for the notation.
$\LOp$ is an $\infty$-category of presheaves on $\bF$ and $\DOp$ is one 
on $\Phi$, and the equivalence is simply realized by the functors
\footnote{As a mnemonic aid, $\delta$ stands for ``dendrification'',
$\lambda$ for ``Luriefication''.
}
\begin{align*}
\lambda:\DOp\to\LOp,&\  \ \lambda(D)(A) = \Map_\DOp(\omega(A),D),\\
\delta:\LOp\to\DOp, & \ \ \delta(L)(F)  = \Map_\LOp(i(F), L).
\end{align*}

Here $A, D, F$ and $L$ are objects of $\bF$, $\DOp$, $\Phi$ and 
$\LOp$ respectively, and $i$ denotes the embedding of objects of 
$\Phi$ as free operads in $\LOp$; see \ref{ss:main} below for detailed definitions. Our main theorem can then be stated as follows:

\begin{thm}
The functors $\lambda$ and $\delta$ define an equivalence of symmetric monoidal $\infty$-categories
$$
\lambda:\DOp\rlarrows\LOp:\delta.
$$
\end{thm}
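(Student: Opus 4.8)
The plan is to identify $\lambda$ and $\delta$ as restriction functors and to promote the adjunction they form into an equivalence by testing on generators. Since $\LOp$ and $\DOp$ are reflective localizations of $\operatorname{PSh}(\bF)=\Fun(\bF^\op,\Spc)$ and $\operatorname{PSh}(\Phi)$, and every $D\in\DOp$ is local, Yoneda gives $\lambda(D)(A)=\Map_\DOp(\omega(A),D)=D(\omega(A))$; hence $\lambda$ is restriction $\omega^*$ along $\omega$, and likewise $\delta$ is restriction $i^*$ along $i\colon\Phi\to\LOp$. The first step is to check that $\lambda$ is well defined, i.e. that $\omega^*$ carries dendroidal Segal operads to Lurie operads. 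This follows because $\omega$ takes the Segal-core decompositions of level forests to those of forests, so the Lurie Segal conditions for $\omega^*D$ reduce to the dendroidal ones for $D$.

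The hinge is the identity $\lambda(h_F)\simeq i(F)$, where $h_F$ denotes the representable on $F\in\Phi$: the free Lurie operad on a forest is the $\lambda$-image of that forest (this is essentially the definition of $i$, or an immediate computation). Granting it, one gets $\lambda\dashv\delta$, since on a representable $\Map_\LOp(\lambda(h_F),L)=\Map_\LOp(i(F),L)=\delta(L)(F)=\Map_\DOp(h_F,\delta(L))$, and the general case follows because the $h_F$ generate $\DOp$ under colimits while $\lambda=\omega^*$ preserves colimits (it is a left adjoint). I would then prove the theorem by showing the unit $\id\to\delta\lambda$ and counit $\lambda\delta\to\id$ are equivalences. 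As both composites preserve colimits and $\DOp,\LOp$ are generated under colimits by the $h_F$, respectively the $i(F)$, it suffices to evaluate on these generators.

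On generators the two conditions collapse to a single combinatorial statement: $\delta(i(F))\simeq h_F$, i.e. $\Map_\LOp(i(F'),i(F))\simeq\Map_\Phi(F',F)$, the full faithfulness of the free-operad embedding $i$. Indeed $\delta(i(F))(F')=\Map_\LOp(i(F'),i(F))$ and $h_F(F')=\Map_\Phi(F',F)$, and combined with $\lambda(h_F)\simeq i(F)$ this makes both unit and counit equivalences on generators. This is the main obstacle. To prove it I would exhibit $i(F)$ as the colimit, over the category of level forests $A$ equipped with a map $\omega(A)\to F$, of the corepresentable level-forest operads $\lambda(h_{\omega(A)})$, matching the dendroidal Segal decomposition of $F$; testing $\Map_\LOp(i(F'),i(F))$ against this decomposition then returns $\Map_\Phi(F',F)$. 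The decisive point is that the comparison between the two indexing diagrams — level structures on $F$ versus the Segal cover of $F$ — is cofinal, and this is exactly where the explicit combinatorics of $\omega\colon\bF\to\Phi$, namely how a forest is filtered by levels, must be used. I expect this cofinality argument, together with the well-definedness of $\lambda$, to carry essentially all the weight of the proof.

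Finally, for the symmetric monoidal statement I would use that both tensor products arise by (localized) Day convolution from the Boardman--Vogt-type tensors on $\bF$ and $\Phi$, for which $\omega$ is a symmetric monoidal functor. Then $\omega_!$ is strong monoidal and its right adjoint $\lambda=\omega^*$ is lax symmetric monoidal; one checks its structure maps are equivalences on the generators $h_F$, hence everywhere, so $\lambda$ is symmetric monoidal. An equivalence of underlying $\infty$-categories carrying a symmetric monoidal structure is automatically a symmetric monoidal equivalence, with symmetric monoidal inverse, so applying this to $\lambda$ with inverse $\delta$ yields the desired equivalence of symmetric monoidal $\infty$-categories.
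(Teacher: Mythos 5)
Your plan founders in two places, and in both the difficulty sits exactly where you declared it soft. First, the reduction of the unit and counit to generators is unjustified: colimits in $\DOp$ and $\LOp$ are \emph{not} computed in the ambient presheaf categories (both are Bousfield localizations), $\delta(L)(F)=\Map_\LOp(i(F),L)$ is a nerve-type right adjoint, and $\lambda$ is a priori only the restriction of $\omega^*$ to full subcategories, so neither $\delta\lambda$ nor $\lambda\delta$ can be assumed cocontinuous. Moreover you have inverted where the weight lies. The identity $\Map_\LOp(i(F'),i(F))\simeq\Map_\Phi(F',F)$, which you call the main obstacle, is essentially free: $\ell$ embeds conventional operads fully faithfully into $\LOp$ and $o$ is a full embedding by the very definition of $\Phi$. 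The unavoidable content appears when one evaluates the counit objectwise: $(\lambda\delta L)(A)=\Map_\LOp(i(\omega(A)),L)$, so what must be proved is that the canonical map $A\to j(\omega(A))$ induces an equivalence $\Map_\LOp(i\circ\omega(A),L)\to\Map_{P(\bF)}(A,L)$ for every fibrous $L$ --- that is, the free operad on the level forest $\omega(A)$ is the \emph{operadic localization} of the simplex $A$ in $\Cat^+_{/\Fin_*^\natural}$ (Proposition~\ref{prp:op-eq}). A cofinality comparison of indexing diagrams cannot deliver this; the paper proves it by an entirely different mechanism, namely the reconstruction theorem (Theorem~\ref{thm:reconstruction}: a color-surjective map of operads inducing an equivalence on $\cS$-valued algebra categories is an equivalence), Pavlov--Scholbach rectification, and a pruning induction on simplices with Lemma~\ref{lem:correctfp} controlling the homotopy fiber products. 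You also never verify that $\delta$ lands in $\DOp$, which requires the pushout $i(T)\simeq i(T^b)\sqcup^{i(b)}i(T_b)$ in $\LOp$ (Proposition~\ref{prp:cuttingtree}, proved by the same algebraic method), and well-definedness of $\lambda$ needs the fibrous conditions (Fib1)--(Fib3), in particular cocartesian lifts of inerts, not only Segal conditions.

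Second, the symmetric monoidal step rests on a false premise: there is no symmetric monoidal structure on $\Phi$ (or on $\bF$) for which $\omega$ could be strong monoidal, because $o(F_1)\otimes_{BV}o(F_2)$ is a union of shuffles and is not representable by a forest. Plain Day convolution is therefore unavailable on either side, and Lurie's tensor product on $\LOp$ is not presented as one. The paper instead encodes the Boardman--Vogt structure as an \emph{anti-operad} $\boldsymbol\Phi$, obtains only a \emph{colax} symmetric monoidal structure on $P(\Phi)$, and needs the shuffle combinatorics of the Appendix (Propositions~\ref{prp:app-segal} and~\ref{prp:app-asso}) to show this descends to a genuine symmetric monoidal structure on $\DOp$; the compatibility of $\lambda$ with the two tensor products then hinges on the nontrivial Lemma~\ref{thm:BVL}, that $\ell(\cP)\otimes\ell(\cQ)\simeq\ell(\cP\otimes_{BV}\cQ)$ for $\cP$ free on a forest, proved once more through algebra categories and an induction on corollas. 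Your closing observation --- that a symmetric monoidal functor which is an equivalence of underlying categories is automatically a symmetric monoidal equivalence --- is correct and is how the paper concludes as well, but everything feeding into it is missing from your argument.
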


The other comparison proofs mentioned above are also based on the same 
functor from level forests to forests, but there are several important 
differences. First of all, our result is an equivalence of symmetric 
monoidal $\infty$-categories, not just of $\infty$-categories. To prove 
this sharper result, we use a colax symmetric monoidal structure on the 
category of copresheaves on an operad~\footnote{Or, as it appears in 
our paper, the category of presheaves on an anti-operad.}, which may be of 
independent interest. Secondly, our proof uses the category of algebras 
of an operad in two essential ways. We use the comparison theorem 
of Pavlov and Scholbach~\cite{PS} which states that the $\infty$-category underlying the 
category of simplicial algebras over a $\Sigma$-free  
operad $P$ in  sets is equivalent to the $\infty$-category of algebras over the 
associated 
$\infty$-operad $\ell(P)$ in the $\infty$-category of spaces, and 
similarly for algebras in a symmetric monoidal model category $\cC$ and 
its underlying $\infty$-category $\cC_\infty$. We state this result 
somewhat cryptically as
$$
\Alg_P(\cC)_\infty = \Alg_{\ell(P)}(\cC_\infty),
$$
see~\ref{sss:PSrectification} for a precise 
formulation. (This result is analogous to an earlier result 
for linear operads proved in~\cite{H.R}.) Secondly, we prove and use the 
following reconstruction theorem for $\infty$-operads, stating that a 
map $\cP\to\cQ$ between $\infty$-operads which is essentially surjective 
on colors is an equivalence whenever it induces an equivalence between 
the associated $\infty$-categories of algebras in the $\infty$-category 
of spaces; see Theorem~\ref{thm:reconstruction} below. (By the result of \cite{PS} just mentioned this  implies the analogous known result 
for $\Sigma$-cofibrant simplicial operads, 
see~\cite{CG}.)

\subsection{}

To conclude this introduction, let us briefly sketch the contents of 
this paper. In Section~\ref{sec:preliminary}, we fix some conventions 
about our use of $\infty$-categorical language, and introduce the 
$\infty$-categories $\DOp$ and $\LOp$ featuring in our main theorem above. In Section~\ref{sec:equivalenceLOPDOP}, we 
state and prove a weaker form of the main theorem, ignoring the 
symmetric monoidal structure for the moment. The proof uses a lemma 
which is based on the reconstruction theorem, which we postpone until 
Section~\ref{sec:opalgebras} where we discuss algebras over an 
$\infty$-operad. In the
final Section~\ref{sec:monoidal} we address the different symmetric monoidal structures 
involved, and prove that they are respected by the functors $\delta$
and $\lambda$. The structure on $\DOp$ is defined in terms of shuffles of trees about which we explain some basic facts in the Appendix.

\subsection{Acknowledgements}

A large part of the paper was written during V.H.'s (pre-pandemic) visits to University of Utrecht. We thank Rune Haugseng for  helpful correspondence and Matija Basi\'c for help with the pictures.  We are very grateful to the referees whose remarks
helped to improve the exposition.
The work of V.H. was supported by ISF 786/19 grant.


\section{Preliminary definitions}
\label{sec:preliminary}

\subsection{$\infty$-categorical conventions}
\label{ss:conventions}

 We present here some basic notation and discuss a few standard recipes 
for working with $\infty$-categories. 

In what follows the word ``category'' means $\infty$-category, operad means $\infty$-operad, functor means $\infty$-functor and so on.
If we wish to emphasize that an $\infty$-category (or an 
$\infty$-operad) can be modelled by a {\sl strict} inner Kan complex,
we sometimes refer to it as a {\sl conventional} category (or operad).

The most basic category is the category of spaces $\cS$
underlying the Quillen model category of simplicial sets. Given two categories $\cC$, $\ccD$, there is a category of functors $\Fun(\cC,\ccD)$
satisfying the standard equivalence
$$
\Map(\cX,\Fun(\cC,\ccD))=\Map(\cX\times\cC,\ccD).
$$

The category of categories $\Cat$ can be realized as a full subcategory
of the category of simplicial spaces $P(\Delta)=\Fun(\Delta^\op,\cS)$,
spanned by the simplicial presheaves $X:\Delta^\op\to\cS$ 
satisfying
the $\infty$-categorical variant of the Segal and completeness 
properties, see~\cite{R}:
\begin{itemize}
\item[1.] For any $n$ the natural map
$$
X_n\to X_1\times_{X_0}\cdots_{X_0}\times X_1
$$
is an equivalence.
\item[2.]  The map $\Map(J,X)\to\Map(*,X)$
induced by a map $*\to J$, is an equivalence. ($J\in P(\Delta)$ is the presheaf corresponding to the category having two objects and a unique isomorphism between them.) 
\end{itemize}

This approach allows one to define the opposite of a category
defined by a simplicial space $\cC:\Delta^\op\to\cS$ as the composition
of $\cC$ with the functor $\op:\Delta\to\Delta$ reversing the ordering of a finite totally ordered set.

\subsubsection{}

Another endofunctor of $\Delta$, carrying $[n]$ to the join
$[n]^\op\star[n]=[2n+1]$, gives the construction of $\Tw(\cC)$, the category of twisted arrows in $\cC$. The canonical projection
$$\Tw(\cC)\to\cC^\op\times\cC$$
is a left fibration; that is, it is classified by a functor
$$
\wt Y:\cC^\op\times\cC\to\cS.
$$
This functor can be rewritten as the Yoneda embedding $Y:\cC\to P(\cC)=
\Fun(\cC^\op,\cS)$.

Dealing with $\infty$-categories requires extra care when writing formulas. It is in general not allowed to define functors by
describing them on objects and arrows as there is no way to describe all
required compatibilities. However, some standard formulas
do define functors. For instance, given a functor $f:\cC\to\ccD$,
one has a functor $\wt f:\ccD^\op\times\cC\to\cS$ defined by the 
formula $\wt f(d,c)=\Map_\ccD(d,f(c))$. This formula just means that
$\wt f$ is defined as the composition 
$$
\ccD^\op\times\cC\to\ccD^\op\times\ccD\stackrel{\wt Y}{\to}\cS.
$$
\subsubsection{}

A map of spaces $f:X\to Y$ (for example, modelled by Kan simplicial sets) exhibits $X$ as {\sl a  subspace of $Y$} if $f$ induces an equivalence of $X$ with a union of a subset of connected components of $Y$. This
notion generalizes to any category $\cC$: an arrow $f:c\to d$ is mono,
if  for any $x\in \cC$ the induced map of spaces $\Map_\cC(x,c)\to\Map_\cC(x,d)$
is an inclusion of a subspace. In the case when $\cC=\Cat$, we get the notion of
subcategory: it is defined by a subspace of objects, and a subspace
of morphisms for each pair of objects.

\subsubsection{Subfunctor} 
\label{sss:subfunctor}
The notion of subspace mentioned above allows one to construct
a subfunctor of a given functor.
In this context the following elementary result is useful
(see~\cite{H.Lec}, 9.2.3). 

\begin{Prp} 
Let $F:\cC\to\ccD$ be a functor. Let, for each $x\in \cC$, a subobject $G_x$ of $F(x)$ be given, so that for each $a:x\to y$ the composition $G_x\to F(x)\to F(y)$ factors through $G_y$. Then the collection of subobjects $G_y$ uniquely glues into a subfunctor
$G:\cC\to\ccD$.
\end{Prp}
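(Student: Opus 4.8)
The plan is to translate the statement into one about space-valued functors, where a subfunctor is literally a cosieve on the category of elements and therefore carries no higher coherence data. To this end I would first pass to mapping spaces: associated to $F$ is the functor $\wt F:\ccD^\op\times\cC\to\cS$ with $\wt F(Z,x)=\Map_\ccD(Z,F(x))$, which under the equivalence $\Fun(\ccD^\op\times\cC,\cS)\simeq\Fun(\cC,P(\ccD))$ is nothing but the composite $Y\circ F$ of $F$ with the Yoneda embedding. Each monomorphism $G_x\to F(x)$ induces, by Yoneda, a subspace $\Map_\ccD(Z,G_x)\subseteq\Map_\ccD(Z,F(x))$ depending functorially (and contravariantly) on $Z$, so the hypothesis provides a pointwise family of subobjects of $\wt F$.

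The key step is to identify the subobjects of $\wt F$ in $\Fun(\cB,\cS)$, where $\cB=\ccD^\op\times\cC$. Writing $E=\int\wt F$ for the total space of the left fibration over $\cB$ classified by $\wt F$, one has the slice equivalence $\Fun(\cB,\cS)_{/\wt F}\simeq\Fun(E,\cS)$. Since a monomorphism into $\wt F$ is exactly a $(-1)$-truncated object of this slice, and truncation in a functor category is computed pointwise, the poset of subobjects of $\wt F$ is identified with $\Fun(E,\cS_{\leq -1})=\Fun(E,\{0<1\})$, i.e. with the set of cosieves on $E$: subsets $U$ of the objects of $E$ such that $e\in U$ and $e\to e'$ an arrow of $E$ force $e'\in U$. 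The given subspaces single out the subset $U$ of those objects $(Z,x,s)$ of $E$ for which $s:Z\to F(x)$ factors through $G_x$; closure of $U$ in the $\ccD^\op$-direction is automatic from $G_x$ being a subobject, while closure in the $\cC$-direction — that $G_x\to F(x)\xrightarrow{F(a)}F(y)$ factors through $G_y$ for every $a:x\to y$ — is precisely the hypothesis. Hence there is a unique subobject $\wt G\subseteq\wt F$ restricting to the prescribed subspaces, which already yields the uniqueness asserted in the proposition.

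It remains to descend $\wt G$ back to $\ccD$. For each fixed $x$ the presheaf $\wt G(-,x)=\Map_\ccD(-,G_x)$ is representable, so the functor $\cC\to P(\ccD)$ corresponding to $\wt G$ takes values in the essential image of the Yoneda embedding; as the latter is a full subcategory and $Y$ is fully faithful, this functor factors uniquely as $Y\circ G$ for a functor $G:\cC\to\ccD$ with $G(x)=G_x$. The inclusion $\wt G\hookrightarrow\wt F$ then corresponds to a natural transformation $G\to F$ that is objectwise a monomorphism (Yoneda reflects monomorphisms), that is, to the desired subfunctor. I expect the middle step to be the main obstacle: making precise that the subobjects of a space-valued functor are exactly the cosieves on its category of elements, so that the arrow-level compatibility in the hypothesis automatically supplies all higher coherences. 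This is where the $(-1)$-truncatedness of monomorphisms — the fact that being a subobject is a property and not additional structure — does the essential work.
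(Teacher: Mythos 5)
Your proof is correct and is essentially the intended argument: the paper offers no proof of its own beyond the citation to \cite{H.Lec}, 9.2.3, and the mechanism there is exactly your reduction via Yoneda to space-valued functors followed by the identification of subobjects of $\wt F$ with cosieves on the total category $E$ of the classifying left fibration, where $(-1)$-truncatedness makes the gluing a property rather than structure, so the objectwise compatibility hypothesis supplies all higher coherences for free. Note in passing that the slice equivalence $\Fun(\ccD^\op\times\cC,\cS)_{/\wt F}\simeq\Fun(E,\cS)$ you invoke is the paper's own Lemma~\ref{lem:presheaves-eq} in copresheaf form, so your argument stays entirely within the paper's toolkit.
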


\subsubsection{} 
\label{sss:loc}
A marked category is a pair 
$\cC^\natural=(\cC,\cC^\circ)$ where $\cC$ is a category
and $\cC^\circ$ is a subcategory of $\cC$ containing $\cC^\eq$, the maximal subspace of $\cC$. 
The category $\Cat^+$ of marked categories is defined
as the full subcategory of $\Fun([1],\Cat)$ spanned by the embeddings
$\cC^\circ\to\cC$. 
We denote by $\cC^\flat=(\cC,\cC^\eq)$ the category $\cC$ endowed with the minimal marking. The embedding
$
\Cat\to\Cat^+
$
carrying $\cC$ to $\cC^\flat$ has a left adjoint called localization 
and denoted
\begin{equation}
\label{eq:loc}
L:\Cat^+\to\Cat.
\end{equation}
For marked categories represented by a pair of simplicial categories,
the localization is represented by the Dwyer-Kan construction.

\subsubsection{}
\label{sss:bousfield}
Many important $\infty$-categories appear as the ones {\sl underlying} 
model categories. One such is the standard model structure on 
simplicial sets modelling $\cS$. Another one is the complete Segal model 
for the $\infty$-category of $\infty$-categories $\Cat$ that has already been mentioned above. 

The $\infty$-category of a model
category is obtained by a general localization construction as described in ~\ref{sss:loc} which does not enjoy very nice properties. 
Fortunately, one can often present the $\infty$-category underlying
a model category as a Bousfield localization~\footnote{A localization
that has a fully faithful right adjoint} of a certain $\infty$-category 
of presheaves of spaces. For instance, 
$\Cat$ can be presented as a Bousfield localization of the 
$\infty$-category $P(\Delta)$ of simplicial spaces. As explained above,
this is an $\infty$-categorical reformulation of the fact \cite{R}
that the model category of  complete Segal spaces is obtained by
a Bousfield localization  (in the sense of model categories) from the 
Reedy model structure on bisimplicial sets. Below we will present the 
$\infty$-categories $\DOp$ and $\LOp$ in a similar way, see \ref{sss:dop} and \ref{sss:lop-bousfield}.

Note that by a result of Dugger~\cite{D}, any $\infty$-category 
underlying a combinatorial model category is in fact equivalent to 
such a localization of a category $P(\cC)$ of simplicial presheaves.

Let $\CS\subset\Seg\subset P(\Delta)$ denote the full subcategories
of $P(\Delta)$ spanned by the complete Segal spaces and by all Segal spaces, respectively. The following easy observation  will be used below.

\begin{lem}
\label{lem:completeness-fiber}
Let $f:X\to B$ be an arrow in $\Seg$ with $B\in\CS$. Then $X\in\CS$
iff the fibers of $f$ are in $\CS$.
\end{lem}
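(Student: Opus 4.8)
The plan is to detect completeness by a single corepresentability condition and then to exploit that mapping spaces commute with the pullbacks that define the fibers.

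Recall from Rezk (see~\cite{R}) that a Segal space $W$ lies in $\CS$ if and only if the restriction map
\[
\mathrm{ev}\colon \Map(E,W)\longrightarrow \Map(\Delta^0,W)=W_0
\]
along the inclusion $\Delta^0\to E$ of an object is an equivalence, where $E$ is the nerve of the contractible groupoid $\{0\cong 1\}$ (the ``walking isomorphism''). First I would record that $\Seg$ is closed under limits in $P(\Delta)$, being a reflective subcategory; consequently, for every object $b\in B_0$, viewed as a map $b\colon\Delta^0\to B$, the fiber $X_b:=X\times_B\Delta^0$ is again a Segal space, so the criterion above applies to it as well.

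Next I would contemplate the commuting square
\[
\begin{CD}
\Map(E,X) @>{\alpha}>> X_0\\
@V{p}VV @VV{q}V\\
\Map(E,B) @>{\beta}>> B_0
\end{CD}
\]
induced by $f$, whose horizontal maps are the evaluation maps above. Since $B\in\CS$, the bottom map $\beta$ is an equivalence; by functoriality the square commutes ($q\circ\alpha=\beta\circ p$), so $\alpha$ is canonically a morphism of spaces over $B_0$. Now the functors $\Map(E,-)$ and $(-)_0=\Map(\Delta^0,-)$ preserve limits (mapping spaces preserve limits in the target), hence carry the pullback $X_b=X\times_B\Delta^0$ to pullbacks. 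Using $\beta$ to match $b\in B_0$ with the point $b^E\in\Map(E,B)$, this identifies the fiber of $\alpha$ over $b$ with the evaluation map $\Map(E,X_b)\to(X_b)_0$, i.e. with the completeness criterion for $X_b$.

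Finally I would invoke straightening, $\cS_{/B_0}\simeq\Fun(B_0,\cS)$, under which a morphism over $B_0$ is an equivalence if and only if it is a fiberwise equivalence. Applied to $\alpha$, this yields the chain $X\in\CS\iff\alpha$ is an equivalence $\iff$ $\alpha_b$ is an equivalence for all $b\in B_0$ $\iff$ $X_b\in\CS$ for all $b$, which is the assertion. The one delicate point, and the step I would take the most care over, is the middle identification: that the fiber of $\alpha$ over $b$ is exactly the completeness map of $X_b$, and nothing larger. This rests entirely on the limit-preservation of the corepresentable functors $\Map(E,-)$ and $\Map(\Delta^0,-)$ together with the equivalence $\beta$; the remaining steps are formal.
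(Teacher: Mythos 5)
Your proof is correct and takes essentially the same route as the paper's: both reduce completeness to the corepresentability criterion along $*\to J$ (your $E$ is the paper's $J$), form the same commutative square of mapping spaces over $B$, use that the bottom map is an equivalence since $B\in\CS$, and detect the top map on its fibers over points $b\in B_0$, which identify with the completeness maps $\Map(J,X_b)\to \Map(*,X_b)$ of the fibers. Your additional justifications (limit-preservation of $\Map(E,-)$, closure of $\Seg$ under pullback, straightening to detect equivalences fiberwise) simply make explicit the steps the paper compresses into ``it is obviously so.''
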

\begin{proof}
Recall from \ref{ss:conventions} that a Segal space $X$ is complete iff the map $\Map(J,X)\to\Map(*,X)$
induced by a map $*\to J$, is an equivalence.
We have a commutative diagram of spaces
$$
\xymatrix{
&\Map(J,X)\ar[r]\ar[d]&\Map(*,X)\ar[d]\\
&\Map(J,B)\ar^\sim[r]&\Map(*,B)
}
$$
and we need to show that the top horizontal arrow is an
equivalence. It is obviously so iff the map of fibers at any $b\in\Map(*,B)$ is an equivalence. These fibers identify with
$\Map(J,X_b)$ and $\Map(*,X_b)$ where $X_b$ denotes the fiber of $f$ 
at $b\in B$.
\end{proof}

\subsection{Dendroidal $\infty$-operads}
\label{ss:dop}

\subsubsection{}
\label{sss:dop}
We begin by recalling the definition of the category $\Omega$ introduced in \cite{MW} and discussed in detail in \cite{HM}, Section 3.2,
see also the Appendix below.
The objects of $\Omega$ are finite trees, allowed to have ``external''
edges attached to just one vertex. One of these external edges is
specified as the {\sl root} of the tree, the other external edges are called leaves. The edges of a tree connected to two vertices are called {\sl internal} or {\sl inner} edges. The category $\Omega$ includes the object $\eta$ consisting of just one edge that is at the same time the root and the leaf. The choice of a root defines an orientation of each edge in the tree, towards the root. This specifies for each vertex $v$ an outgoing edge $\out(v)$ and
a set  $\inn(v)$ of incoming edges. The cardinality of $\inn(v)$
is called the {\sl valence} of the vertex $v$, and it is allowed to be zero. Vertices of valence zero are called {\sl stumps}.
To define the morphisms of the category $\Omega$, we observe that each such tree $T$ defines a (symmetric) colored operad
$o(T)$. The colors of $o(T)$ are the edges of $T$ and its operations are generated by the vertices, each vertex $v$ defining an operation from the set $\inn(v)$ to $\out(v)$. The morphisms $S\to T$ in $\Omega$ are now defined to be the operad maps $o(S)\to o(T)$. In particular, this makes $o$ into a full embedding of $\Omega$ into the category $\Op(\Set)$ of operads in sets.

Alternatively, one may define the morphisms $S\to T$ in $\Omega$ as generated by ``elementary morphisms'': these are 
\begin{itemize}
\item[(a)] isomorphisms $S\stackrel{\sim}{\to}T$;
\item[(b)] degeneracies $S\twoheadrightarrow T$ where $S$ is obtained from $T$ by putting a new vertex in the middle of an edge of $T$;
\item[(c)] inner face maps $S\rightarrowtail T$ where $S$ is obtained from $T$ by contracting an inner edge of $T$; and
\item[(d)] outer face maps where $S$ is obtained by chopping off
an external vertex from $T$ (i.e., a vertex attached to just one inner edge). In addition, if $T$ is a corolla, i.e. a tree
with just one vertex, each edge of $T$ defines an elementary morphism $\eta\to T$.
\end{itemize}

We observe now that there is a full embedding $\iota:\Delta\hookrightarrow\Omega$ of the simplex category $\Delta$ into $\Omega$,
which assigns to each object $[n]$ of $\Delta$ the linear tree
$\iota[n]$ with $n$ vertices (all of valence $1$) and $n+1$ edges. Under this identification of $\Delta$ with a subcategory of $\Omega$, degeneracies, inner and outer faces have their usual meaning.

\subsubsection{}
\label{sss:Phi}
It is convenient to extend the category $\Omega$ to include disjoint unions of trees. To this end we define the
category of forests $\Phi$ similar to $\Omega$, as the full subcategory
of $\Op(\Set)$, spanned by $o(F)$ where $F$ is a disjoint union of trees and $o(F)$ is defined as the coproduct of the operads $o(T_i)$ where $T_i$ are the components of $F$. We will denote by $o:\Phi\to\Op(\Set)$ the full embedding described above, see diagram 
(\ref{eq:thediagram}). The category $\Phi$ can be alternatively defined as the category
obtained from $\Omega$ by formally adjoining  finite coproducts.
(Note that our definition of $\Phi$ is different from that of
\cite{HHM}. First, our category $\Phi$ has an empty forest; and, secondly,
the version of \cite{HHM} has fewer arrows (independence property) than
our version.)

\subsubsection{}
\label{sss:dop}
The ($\infty$-) category of dendroidal operads $\DOp$ is defined as the full subcategory 
of the category $P(\Omega)$ of presheaves (of spaces) spanned by the presheaves
satisfying Segal and completeness properties:
\begin{itemize}
\item[(D1)] For an inner edge $b$ in $T$ let $T_b$ and $T^b$ be the upper and the lower part
of $T$ obtained by cutting $T$ at $b$. Then $X(T)\to X(T_b)\times_{X(b)}X(T^b)$
is an equivalence.
\item[(D2)]  Completeness: $\iota^*(X)\in P(\Delta)$ is a complete
Segal space.
\end{itemize}

It will be convenient for us to realize $\DOp$ as a full subcategory in $P(\Phi)$ spanned by the presheaves
satisfying the above two properties, as well as the  extra (also Segal-type) property.
\begin{itemize}
\item[(D3)] The natural map $X(F)\to\prod X(T_i)$ for a forest $F$ consisting of the trees $T_i$, is an equivalence.
In particular, $X(\emptyset)$ is contractible.
\end{itemize}
It is a standard fact that the category $\DOp$ is equivalent to the one underlying
the model category of dendroidal complete Segal spaces,  or the equivalent one of dendroidal sets,
see~\cite{CM} or \cite{HM}.

\subsection{Lurie $\infty$-operads}
\label{ss:lop}

In what follows $\Fin_*$ denotes the conventional category of finite pointed sets. We denote by $I_*$ the finite pointed set $I\sqcup\{*\}$
and put $\langle n\rangle=\{1,\ldots,n\}_*$.
An arrow $f:I_*\to J_*$ is called {\sl inert} if for any $j\in J$
the set $f^{-1}(j)$ consists of one element.

A functor $p:\cP\to\Fin_*$ is called {\sl fibrous} if the following
conditions are satisfied. In what follows we denote by $\cP_n$ (or
$\cP_I$) the fiber of $p$ at $\langle n\rangle$ (or at $I_*$).
\begin{itemize}
\item[(Fib1)] Any inert arrow $f:I_*\to J_*$ has a cocartesian lifting.
As a result, a functor $f_!:\cP_I\to \cP_J$ is defined
(uniquely up to equivalence).
\item[(Fib2)] For the collection of standard inerts 
$\rho^i:\langle n\rangle\to\langle 1\rangle$ defined by
$(\rho^i)^{-1}(1)=\{i\}$, the maps
$$
\cP_n\stackrel{\rho^i_!}{\to}\cP_1
$$
form a product diagram.
\item[(Fib3)] Let $f:\langle m\rangle\to\langle n\rangle$ be an arrow in 
$\Fin_*$, $x\in\cP_n$ and $r^i:x\to x_i$ the cocartesian liftings of
$\rho^i:\langle n\rangle\to \langle 1\rangle$. Then the natural
map
$$
\Map^f(y,x)\to\prod_i\Map^{\rho^i\circ f}(y,x_i)
$$
is an equivalence for any $y\in\cP_m$.
\end{itemize}

\subsubsection{}
\label{sss:lop}
Cocartesian liftings of inerts in $\Fin_*$ are called inerts in $\cP$.
The category of Lurie operads $\LOp$ is the subcategory of 
$\Cat_{/\Fin_*}$ consisting of fibrous objects and morphisms preserving inerts, see~\cite{L.HA}, 2.3.3.28 or \cite{H.EY}, 2.6.3.
For example, any conventional (colored) operad $P$ in $\Set$ with 
the set of colors $[P]$ defines an object $\ell(P)$ of $\LOp$, see~\cite{L.HA}, 
2.1.1.7 and 2.1.1.22. Its morphisms over a map $\alpha:I_*\to J_*$ in 
$\Fin_*$ are triples $(c,d,p)$ where $c:I\to[P]$, $d:J\to[P]$, and 
$p=(p_j:j\in J)$ where $p_j\in P(c|_{\alpha^{-1}(j)},d(j))$. 
These formulas define a functor $\ell:\Op(\Set)\to\LOp$ identifying $\Op(\Set)$
with the full subcategory of $\LOp$ spanned by fibrous maps 
$p:C\to\Fin_*$ with $C$ a conventional category, see diagram~(\ref{eq:thediagram}).

\subsubsection{}
\label{sss:smlop}

The category $\LOp$ has a symmetric monoidal structure that is induced
from the smash product operation on $\Fin_*$.
Following Lurie~\cite{L.HA}, 2.2.5.9, we will say that a functor
$F:(\Fin_*)^n\to\Fin_*$ is a smash product functor if 
$F(\langle 1\rangle,\dots,\langle 1\rangle)\approx\langle 1\rangle$
and $F$ preserves coproducts in each argument. A smash product functor 
is unique up to a unique isomorphism. An {\sl operad multifunctor}
$(P_1,\ldots,P_n)\to Q$ is defined as a commutative diagram
$$
\xymatrix{
&P_1\times\dots\times P_n\ar[d]\ar^{\quad\quad f}[r] & Q\ar[d] \\
&\Fin_*\times\dots\times\Fin_*\ar^{\quad\quad F}[r]&\Fin_*
}
$$
where $f$ carries $n$-tuples of inerts in $P_1\times\dots\times P_n$ 
to inerts in $Q$ and $F$ is a smash product.
The notion of operad multifunctor defines on $\LOp$ the structure of an 
operad that turns out to be a symmetric monoidal category, 
where the multiple tensor product is defined as the
target of the universal operad multifunctor out of $P_1,\dots,P_n$,
see~\cite{L.HA}, 2.2.5.13.

\subsubsection{}

As explained before, the category $\Cat$ identifies with the full subcategory of $P(\Delta)$
spanned by the complete Segal objects. 
In particular, the (conventional) category $\Fin_*$ can be viewed as an object of $P(\Delta)$, which we still denote by $\Fin_*$.
This identifies
$\Cat_{/\Fin_*}$ with a full subcategory of $P(\Delta)_{/\Fin_*}$.

The following general fact allows one to identify the latter with
$P(\Delta_{/\Fin_*})$.

Let $Y:\cC\to P(\cC)$
be the Yoneda embedding and let $F\in P(\cC)$. We define
\begin{equation}
\label{eq:overcat}
\cC_{/F}=\cC\times_{P(\cC)}P(\cC)_{/F}
\end{equation}
and denote by $p:\cC_{/F}\to P(\cC)_{/F}$ the natural projection.
Then $p$ is fully faithful and we denote by
\begin{equation}
\label{eq:pshriek}
p_!:P(\cC_{/F})\to P(\cC)_{/F}
\end{equation}
the extension of $p$ to a colimit-preserving functor.

\begin{lem}
\label{lem:presheaves-eq}
$p_!$ is an equivalence.
\end{lem}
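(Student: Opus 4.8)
The plan is to prove that $p_!:P(\cC_{/F})\to P(\cC)_{/F}$ is an equivalence by exhibiting an explicit inverse and checking that the two composites are equivalent to the identity. The key structural fact I would exploit is that both sides are presentable $\infty$-categories and $p_!$ is by construction the unique colimit-preserving extension of the fully faithful functor $p:\cC_{/F}\to P(\cC)_{/F}$ along the Yoneda embedding $Y:\cC_{/F}\to P(\cC_{/F})$. Thus $p_!$ is a left adjoint, and it suffices to show that its right adjoint $p^*$, given on objects by $G\mapsto \Map_{P(\cC)_{/F}}(p(-),G)$, is an inverse.

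First I would observe that $\cC_{/F}$, as defined in (\ref{eq:overcat}), is precisely the category whose objects are pairs $(c,s)$ with $c\in\cC$ and $s:Y(c)\to F$ a point of $F(c)$; this is the \emph{category of elements} of $F$. The classical statement I am really aiming at is the equivalence $P(\int_\cC F)\simeq P(\cC)_{/F}$ between presheaves on the category of elements and presheaves over $F$. I would therefore reduce the claim to this well-known fibrational description: an object of $P(\cC)_{/F}$ is a presheaf $G$ together with a map $G\to F$, and unwinding the over-category at each $c\in\cC$ and each point $s\in F(c)$ gives exactly the data of a presheaf on $\cC_{/F}$. Concretely, for $G\to F$ in $P(\cC)_{/F}$ the formula $(c,s)\mapsto \{\text{fiber of } G(c)\to F(c) \text{ at } s\}$ produces a presheaf on $\cC_{/F}$, and I would identify this fiberwise assignment with the right adjoint $p^*$.

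The main steps, in order, are: (i) identify $\cC_{/F}$ with the category of elements and verify that $p$ sends $(c,s)$ to the representable $Y(c)$ equipped with the structure map $s:Y(c)\to F$; (ii) since $p$ is fully faithful (as asserted in the excerpt) and $P(\cC_{/F})$ is generated under colimits by representables, it is enough by a standard argument to check that $p_!$ is fully faithful and essentially surjective; (iii) full faithfulness follows from computing $\Map_{P(\cC)_{/F}}(p Y(c,s), p Y(c',s'))$ via the formula for mapping spaces in an over-category as the fiber of $\Map_{P(\cC)}(Y(c),Y(c'))\to\Map_{P(\cC)}(Y(c),F)$ over $s$, and matching this against $\Map_{\cC_{/F}}((c,s),(c',s'))$ using the Yoneda lemma; (iv) essential surjectivity follows because any $G\to F$ is a colimit of representables over $F$, and $p_!$ preserves colimits and hits every representable object of $P(\cC)_{/F}$ equipped with a map to $F$.

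The hard part will be step (iii)–(iv) done \emph{coherently}, i.e.\ phrasing the fiberwise decomposition so that it assembles into an honest equivalence of $\infty$-categories rather than a pointwise-on-objects statement; the warning in the excerpt about not defining functors object-by-object applies here. The cleanest route around this obstacle is to avoid building the inverse by hand and instead invoke the universal property: $p_!$ is the left Kan extension of the fully faithful $p$ along $Y$, and for such an extension $p_!$ is fully faithful precisely when $p^*p_!\simeq\id$, which by the coend/colimit computation reduces to the full faithfulness of $p$ together with the fact that $P(\cC)_{/F}$ is generated under colimits by the image $p(\cC_{/F})$. That last generation statement is the real content, and I would prove it by noting that every object of $P(\cC)_{/F}$ is canonically a colimit of its own category of elements, whose terms lie in the image of $p$.
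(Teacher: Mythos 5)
Your outline reproduces most of the paper's argument, which is a direct application of Lurie's criterion (HTT, 5.1.6.11): the paper checks that $p$ is fully faithful, that the objects $Y(x)\to F$ generate $P(\cC)_{/F}$ under colimits, and---crucially---that each $Y(x)\to F$ is \emph{absolutely (completely) compact}, i.e.\ that $\Map_{P(\cC)_{/F}}(Y(x)\to F,\,-)$ preserves colimits. This third condition is exactly what your proposal omits, and its omission is a genuine gap: your final reduction, asserting that full faithfulness of $p_!$ ``reduces to the full faithfulness of $p$ together with the fact that $P(\cC)_{/F}$ is generated under colimits by the image,'' is false as a general principle. Full faithfulness of $p$ only identifies the unit $X\to p^*p_!X$ on representables; to propagate this to an arbitrary $X=\colim Y(x_\alpha)$ you must move $\Map(p(x),-)$ past the colimit computing $p_!X$, and that is precisely complete compactness, not generation (generation only upgrades a fully faithful $p_!$ to an equivalence, via conservativity of $p^*$). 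A counterexample to your claimed reduction: the full subcategory of $\cS$ on the single object $S^1$ is fully faithfully embedded and generates $\cS$ under colimits (the point is a retract of $S^1$, and retracts are colimits), yet presheaves on this one-object category are not equivalent to $\cS$, because $\Map(S^1,-)$ does not preserve colimits. Your step (iii) likewise only establishes full faithfulness of $p_!$ \emph{on representables}, which is just the already-known full faithfulness of $p$.

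The missing ingredient is, fortunately, a one-line check in this situation, and it is the line the paper supplies: colimits in $P(\cC)_{/F}$ are detected by colimits in $P(\cC)$, and $\Map_{P(\cC)_{/F}}(Y(x)\to F,\,G\to F)$ is the fiber of the evaluation $G(x)\to F(x)$ over the structure point; evaluation at $x$ preserves colimits, and passing to fibers preserves colimits because colimits in $\cS$ are universal. With this inserted, your steps (i)--(iv) assemble into the paper's proof. Your alternative fiberwise-inverse sketch in the second paragraph (sending $G\to F$ to $(c,s)\mapsto \mathrm{fib}_s(G(c)\to F(c))$) is a legitimate different route---it is essentially how one states HTT 5.1.6.12---but you correctly flagged that making it coherent is the hard part, and then fell back on the universal-property argument without the compactness hypothesis that makes it work.
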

\begin{proof}
We apply Corollary \cite{L.T}, 5.1.6.11,
to $p_!$. Its restriction $p$ is fully faithful, and the image of any
$\phi: Y(x)\to F$ is absolutely compact as colimits in $P(\cC)_{/F}$ are 
detected by colimits in $P(\cC)$. Finally, it is clear that 
the objects $Y(x)\to F$ generate $P(\cC)_{/F}$ under colimits.
\end{proof}

\begin{Rem}
Note that, by definition (\ref{eq:overcat}), 
$\Delta_{/\Fin_*}=\Delta\times_{P(\Delta)}P(\Delta)_{/\Fin_*}$. Since
$\Fin_*$ and all $[n]$ are conventional categories and since the images 
of conventional categories form a full subcategory of $P(\Delta)$, 
$\Delta_{/\Fin_*}$ is equivalent to the conventional category of 
simplices in $\Fin_*$. 

\end{Rem}
In this paper we denote
\begin{equation}
\label{eq:bF}
\bF=\Delta_{/\Fin_*}.
\end{equation}
 Thus $\LOp$ can be identified with a (non-full) subcategory of $P(\F)$.
 
\subsubsection{}
\label{sss:lop-bousfield}
Here is another presentation of $\LOp$, this time as a Bousfield localization. 
The category $\Fin_*$ has a marking
defined by the collection of inert arrows. This marked category is 
denoted by $\Fin_*^\natural$. There is a fully faithful functor 
$\LOp\to\Cat^+_{/\Fin_*^\natural}$ carrying a fibrous
$p:\cO\to\Fin_*$ to the marked category $\cO^\natural$ over 
$\Fin_*^\natural$, with the marking on $\cO$ defined by the inerts. 
By \cite{H.EY}, 2.6.4 (based on \cite{L.HA}, B.0.20),
$\LOp$ is the Bousfield localization with respect to the class of
{\sl operadic equivalences}. An operadic equivalence
is defined as a map $f:X\to Y$ in 
$\Cat^+_{/\Fin_*^\natural}$ inducing an equivalence
$\Map(Y,\cO)\to\Map(X,\cO)$ for any fibrous $\cO$.

\subsubsection{Cocartesian arrows}
\label{sss:cocartesian}
Let $F\in\Cat$. As before, we will identify $F$ with the corresponding
complete Segal space in $P(\Delta)$.
Lemma~\ref{lem:presheaves-eq} defines a full embedding of 
$\Cat_{/F}$ into $P(\Delta_{/F})$. Let $p:X\to F$ be a category over $F$
and let $\cX\in P(\Delta_{/F})$ be the corresponding presheaf. By definition, for $A:[n]\to F$, 
$$\cX(A)=\Map([n],X)\times_{\Map([n],F)}\{A\}.$$
Fix $\alpha:[1]\to F$ and let $a\in\cX(\alpha)$.
We denote by the same letters $a:x\to y$ and $\alpha:\bar x\to \bar y$
the arrows in $X$ and in $F$.
The following lemma is a direct reformulation of the cocartesian property 
of the arrow $a$ in our language.

\begin{Lem}
The arrow $a\in\cX(\alpha)$ is $p$-cocartesian iff for any 
$\sigma:[2]\to F$ with $d_2\sigma=\alpha$, $d_0\sigma=\beta$,
$d_1\sigma=\gamma$, the map 
$$
\{y\}\times_{\cX(\bar y)}\cX(\beta)\to
\{x\}\times_{\cX(\bar x)}\cX(\gamma)
$$
defined as a composition
$$
\{y\}\times_{\cX(\bar y)}\cX(\beta)\to
\{x\}\times_{\cX(\bar x)}\cX(\alpha)\times_{\cX(\bar y)}\cX(\beta)
\stackrel{\sim}{\leftarrow}\{x\}\times_{\cX(\bar x)}\cX(\sigma)
\stackrel{d_1}{\to}
\{x\}\times_{\cX(\bar x)}\cX(\gamma), 
$$
is an equivalence.
\end{Lem}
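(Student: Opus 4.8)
The plan is to match the abstract universal property of a $p$-cocartesian arrow with the condition stated here, by translating all mapping spaces and compositions in $X$ and $F$ into fibres of the presheaf $\cX$. Recall first that $a:x\to y$ lying over $\alpha:\bar x\to\bar y$ is $p$-cocartesian precisely when, for every $w\in X$ with image $\bar w=p(w)$, precomposition with $a$ and with $\alpha$ makes
$$
\Map_X(y,w)\to\Map_X(x,w)\times_{\Map_F(\bar x,\bar w)}\Map_F(\bar y,\bar w)
$$
an equivalence. Passing to the fibre over a point $\beta\in\Map_F(\bar y,\bar w)$, whose image in $\Map_F(\bar x,\bar w)$ is the composite $\gamma=\beta\circ\alpha$, this condition is equivalent to the following: for every $w$ and every $\beta:\bar y\to\bar w$ in $F$, precomposition with $a$ induces an equivalence between the space of edges of $X$ over $\beta$ with source $y$ and target $w$, and the space of edges over $\gamma$ with source $x$ and target $w$.

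Second, I would identify these fibrewise mapping spaces with the expressions appearing in the statement. For an edge $\beta:\bar y\to\bar z$ of $F$, the space $\{y\}\times_{\cX(\bar y)}\cX(\beta)$ is the space of edges of $X$ over $\beta$ with source $y$; sending such an edge to its target exhibits it as lying over the fibre $\cX(\bar z)=X_{\bar z}$, the fibre over $w\in X_{\bar z}$ being exactly the space of edges over $\beta$ from $y$ to $w$. Likewise $\{x\}\times_{\cX(\bar x)}\cX(\gamma)$ lies over $X_{\bar z}$ with fibre over $w$ the space of edges over $\gamma$ from $x$ to $w$. It remains to recognise the composite map of the statement as precomposition with $a$: the Segal property of $\cX$ (which holds because $X$ is a category) makes $(d_2,d_0)\colon\cX(\sigma)\to\cX(\alpha)\times_{\cX(\bar y)}\cX(\beta)$ an equivalence --- this is the arrow marked $\stackrel{\sim}{\leftarrow}$ --- while the first map inserts the chosen $a$ as the $\alpha$-edge and the final $d_1$ reads off the diagonal edge over $\gamma$. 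Hence the composite carries an edge $b$ over $\beta$ with source $y$ to the composite $b\circ a$ over $\gamma$ with source $x$, and in particular commutes with the target projections to $X_{\bar z}$.

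Finally, since the map of the statement lies over $X_{\bar z}$ and respects the target projections, it is an equivalence if and only if it is an equivalence on each fibre, that is, if and only if precomposition with $a$ induces an equivalence from edges over $\beta$ from $y$ to $w$ to edges over $\gamma$ from $x$ to $w$, for every $w\in X_{\bar z}$. As $\sigma$ runs over all $2$-simplices of $F$ with $d_2\sigma=\alpha$, the Segal property of the complete Segal space $F$ (with contractible space of fillers) makes the pair $(\beta,\gamma=\beta\circ\alpha)$ range over all edges $\beta$ out of $\bar y$; combining the two quantifiers recovers precisely the fibrewise reformulation of cocartesianness obtained above, which proves the lemma. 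I expect the only delicate point to be the bookkeeping in this translation --- matching the homotopy-pullback square of mapping spaces (computed in the complete-Segal-space model as fibres of $X_1\to X_0\times X_0$) with the fibre products of values of $\cX$, and checking that the Segal equivalence together with $d_1$ really computes composition --- but no genuine difficulty arises beyond keeping track of sources, targets and fibres.
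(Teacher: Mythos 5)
Your argument is correct, but note that the paper itself contains no argument at this point: its entire proof is the citation ``See \cite{H.EY}, 3.3.2'', plus the remark that the hypothesis there (that $F$ be a conventional category with no nontrivial automorphisms) is superfluous. What you have written is in effect the content of that citation, carried out directly in the complete Segal space model: you start from the model-independent mapping-space criterion for $p$-cocartesianness, namely that $\Map_X(y,w)\to\Map_X(x,w)\times_{\Map_F(\bar x,\bar w)}\Map_F(\bar y,\bar w)$ be an equivalence for all $w$; you pass to fibres over $\beta\in\Map_F(\bar y,\bar w)$; and you identify the resulting fibrewise precomposition maps with the fibres over $w\in\cX(\bar z)$ of the composite in the statement, using the relative Segal equivalence $\cX(\sigma)\stackrel{\sim}{\to}\cX(\alpha)\times_{\cX(\bar y)}\cX(\beta)$ and the fact that composition in a Segal space is by definition ``invert the Segal map, then apply $d_1$''. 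Three small points should be made explicit. First, the relative Segal equivalence uses the Segal property of both $X$ \emph{and} $F$ (it is the map on vertical fibres of a square whose horizontal arrows are the Segal maps of $X$ and of $F$), not only the fact that $X$ is a category, as your parenthesis suggests. Second, the two fibrewise-detection steps (over $\Map_F(\bar y,\bar w)$ and over $\cX(\bar z)$) require the maps being compared to commute with the projections \emph{as maps over the base}, not merely on points; this is automatic in your construction, since every constituent map is induced by simplicial operators fixing the last vertex, but it deserves a sentence. Third, in trading ``for all $\sigma$ with $d_2\sigma=\alpha$'' for ``for all edges $\beta$ out of $\bar y$'' you only need that $d_0:\{\alpha\}\times_{F_1}F_2\to\{\bar y\}\times_{F_0}F_1$ is an equivalence, which follows from the Segal property of $F$ alone; the completeness of $F$, which you invoke, plays no role here. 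With these caveats your proof is a complete, self-contained replacement for the citation, and it also substantiates the paper's remark that the extra hypotheses of \cite{H.EY}, 3.3.2 are unnecessary.
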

\qed

\section{Equivalence of $\LOp$ with $\DOp$} 
\label{sec:equivalenceLOPDOP}

\subsection{}
\label{ss:main}

In this section we will construct an equivalence of ($\infty$-) categories between $\LOp$ and $\DOp$. (It will be upgraded
to an equivalence of symmetric monoidal categories
after some more work.) The construction is based on a functor
$$
\omega:\Delta_{/\Fin_*}=\F\to\Phi,
$$
see diagram (\ref{eq:thediagram}),
which we will define first. The definition of $\omega$
is a variant of the one in \cite{HHM} which dealt with open trees and forests only.

\subsubsection{} 
Consider an object $A:[n]\to\Fin_*$ of $\F$, i.e. a sequence 
$$
A_{0*}\stackrel{\alpha_1}{\to}A_{1*}\to\ldots
\stackrel{\alpha_n}{\to}A_{n*}
$$
of maps between pointed sets. We write $\alpha_{ij}:A_{j*}\to A_{i*}$ for the composition 
$\alpha_i\circ\ldots\circ\alpha_{j+1}$ (for $i\geq j$).
The set of edges of the forest $\omega(A)$ is the disjoint union
$\coprod A_i$ of the sets $A_i$. This set carries a partial order defined for $a\in A_i$ and $b\in A_j$ by
$$
a\leq b\textrm{ iff } j\leq i\textrm{ and }\alpha_{ij}(b)=a.
$$
The roots of $\omega(A)$ are the edges minimal in the above order.
For each $a\in A_i$ in this set of edges with $i>0$, there is a unique
vertex $v_a$ in the forest $\omega(A)$ immediately above $a$. The edge
$a$ is the outgoing edge of $v_a$, while $\inn(v_a)=\alpha^{-1}_i(a)$.
In particular, the set of leaves in the forest can be identified with $A_0$. The set of roots of $\omega(A)$ consists of the elements of $A_n$
together with the elements of $A_i$ sent to the basepoint $*$ under $\alpha_{i+1}:A_i\to A_{i+1}$ for $i=0,\ldots,n-1$.

Here is an example
of the forest corresponding to the map 
$\langle 4\rangle\stackrel{\alpha_1}{\to}\langle 3\rangle\stackrel{\alpha_2}{\to}\langle 1\rangle$
with $\alpha_1(1)=1=\alpha_1(2)$, $\alpha_1(3)=3=\alpha_1(4)$,
$\alpha_2(1)=1=\alpha_2(2)$, $\alpha_2(3)=*$.

\[
\begin{tikzpicture} 
[level distance=10mm, 
every node/.style={fill, circle, minimum size=.1cm, inner sep=0pt}, 
level 1/.style={sibling distance=20mm}, 
level 2/.style={sibling distance=10mm}, 
level 3/.style={sibling distance=5mm}]

\node (lefttree)[style={color=white}] {} [grow'=up] 
child {node (level1) {} 
		child{node (level2) {} 
			child
			child
	}
	child{ node {}}	
};

\node (righttree)[style={color=white}, right = 1.5cm of lefttree] {};
\node (righttreestart)[style={color=white}, above = 0.88cm of righttree] {} [grow'=up] 
child {node {} 
	child
	child
};

\tikzstyle{every node}=[]

\draw[dashed] ($(level1) + (-1.5cm, -0.5)$) -- ($(level1) + (2.5cm, -0.5cm)$);
\draw[dashed] ($(level1) + (-1.5cm, 0.5cm)$) -- ($(level1) + (2.5cm, 0.5cm)$);
\draw[dashed] ($(level1) + (-1.5cm, 1.5cm)$) -- ($(level1) + (2.5cm, 1.5cm)$);

\node at ($(level1) + (-2cm, 1.5cm)$) {$\langle 4 \rangle$};
\node at ($(level1) + (-2cm, 0.5cm)$) {$\langle 3 \rangle$};
\node at ($(level1) + (-2cm, -.5cm)$) {$\langle 1 \rangle$};

\end{tikzpicture} 
\]

This defines $\omega:\F\to\Phi$ on objects. It extends to morphisms in 
the obvious way: a face map $d_iA\to A$ induces a morphism $\omega(d_iA)
\to\omega(A)$ which on each component tree is a composition of faces; and a degeneracy map $A\to s_iA$ induces a morphism $\omega(A)\to\omega(s_iA)$ which is a composition of degeneracies.

Note the following property of $\omega$.
\begin{lem}
\label{lem:retract}
Any forest $F\in\Phi$ is a retract of some $\omega(A)$ for some $A\in\F$.
\end{lem}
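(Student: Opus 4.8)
The plan is to realize $F$ as a \emph{level forest} $\omega(A)$ into which $F$ embeds by inserting unary vertices, and then to write down a collapsing map $r:\omega(A)\to F$ and a section $s:F\to\omega(A)$ with $r\circ s=\id_F$. The key simplification is that $\Phi$ is a \emph{full} subcategory of $\Op(\Set)$, so $r$ and $s$ may be taken to be arbitrary operad maps $o(\omega(A))\to o(F)$ and $o(F)\to o(\omega(A))$; I only have to exhibit them and compute the composite, without checking that either is a composite of elementary morphisms.

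First I would stratify $F$. Choose a level function $\ell$ on the vertices of $F$ with values in $\{1,\dots,n\}$ that strictly increases towards the roots, i.e. $\ell(w)<\ell(v)$ whenever $\out(w)\in\inn(v)$ (for instance $\ell(v)=1+$ the maximal number of vertices strictly above $v$ on a chain, with $n$ the largest value attained). This places each vertex $v$ at a definite level: the refined vertex $\bar v$ will have its output edge at level $\ell(v)$ and its input edges at level $\ell(v)-1$, exactly matching the shape of the vertices $v_a$ in the description of $\omega$. Each edge $e$ of $F$ then occupies a range of levels, from $\ell(v_{\mathrm{top}})$ (or from $0$ if $e$ is a leaf, since all leaves of $\omega(A)$ are forced to lie in $A_0$) down to $\ell(v_{\mathrm{bot}})-1$ (or to the basepoint, respectively to level $n$, if $e$ is a root), and this range is filled by a chain of edge-segments joined by unary vertices. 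Concretely I would let $A_i$ be the set of segments present at level $i$ and define $\alpha_i:A_{i-1,*}\to A_{i,*}$ by sending each segment to the segment directly below it, sending the bottom segment of each input edge of a vertex $v$ with $\ell(v)=i$ to the top segment of $\out(v)$ (this produces $\bar v$), and sending terminal segments of root edges to the basepoint. By construction $\omega(A)$ is $F$ with unary vertices inserted along the edges and its leaves extended down to level $0$, and no spurious leaves or roots are created.

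The collapsing map $r:\omega(A)\to F$ sends every segment of an edge $e$ to $e$, every inserted unary vertex to $\id_e$, and each $\bar v$ to $v$; this is a well-defined operad map because $o(\omega(A))$ is free on its vertices and all source/target colors match. For the section $s$ I would choose, for each edge $e$, a representative segment $s(e)$ in its chain, and send each vertex $v$ with output $e_0$ and inputs $e_1,\dots,e_k$ to the operation obtained from $\bar v$ by post-composing its output with the unary chain from the top segment of $e_0$ down to $s(e_0)$, and pre-composing each input slot with the unary chain from $s(e_j)$ down to the bottom segment of $e_j$. This yields a genuine operation $s(e_1),\dots,s(e_k)\to s(e_0)$, hence, by freeness of $o(F)$, a well-defined operad map $s:F\to\omega(A)$. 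Since $r$ collapses every unary vertex to an identity and restores $s(e)\mapsto e$, $\bar v\mapsto v$, we get $r\circ s=\id_F$.

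I expect the main obstacle to be the bookkeeping hidden in the section $s$. A single edge $e$ of $F$ is simultaneously the output of one vertex and an input of another, so the segment of $e$ adjacent to $\bar v_{\mathrm{top}}$ and the segment adjacent to $\bar v_{\mathrm{bot}}$ are generally different elements of the same chain; one must therefore interpolate with the appropriate unary operations to reconcile the single chosen representative $s(e)$ with the two colors demanded by the adjacent refined vertices. The point that makes this harmless is precisely that every interpolating unary chain is sent to an identity by $r$, so \emph{any} choice of representatives $s(e)$ gives $r\circ s=\id_F$; the only remaining verifications are the routine checks that $\omega(A)$ indeed has the vertices $v_a$ described above and that $r,s$ respect colors, which follow immediately from the construction of $\alpha_i$.
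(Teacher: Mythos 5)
Your proof is correct and follows essentially the same route as the paper: turn $F$ into a level forest $\omega(A)$ by inserting unary vertices, then retract by collapsing them, using that $\Phi$ is full in $\Op(\Set)$ so the section and retraction need only be operad maps. The paper streamlines the bookkeeping by choosing the canonical height function (number of vertices below an edge), which automatically has unit steps, so that unary chains need only be grafted on top of the leaves rather than inserted along internal edges as in your construction.
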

\begin{proof}
In order to present a forest $F$ as $\omega(A)$, one has to assign
a nonnegative number $h(a)$ to each edge $a$ so that $h(a)=h(b)-1$ for $a$ immediately under $b$ and so that the $a$ is a leaf precisely when
$h(a)=0$. The first condition is achieved easily; to achieve the second,
one may need to enlarge the forest $F$ slightly and construct a forest $F'$ by adjoining a sequence of unary edges on top of leaves of $F$. Then $F'$ is of the form $\omega(A)$ and $F$ is a retract of $F'$.

\end{proof}

\subsubsection{}
The functor $\omega$ defines an adjoint pair
$$
\omega_!:P(\bF)\rlarrows P(\Phi):\omega^*.
$$
The functor $\lambda:\DOp\to P(\bF)$ is defined as the restriction of 
$\omega^*$ to $\DOp$. This means that for $D\in\DOp$ and $A\in\bF$,
\begin{equation}
\lambda(D)(A)=\Map_{P(\Phi)}(\omega(A),D).
\end{equation}
Define $i:\Phi\to\LOp$
as the composition of $o:\Phi\to\Op(\Set)$
with the embedding $\ell:\Op(\Set)\to\LOp$ discussed in~\ref{sss:lop}.
The functor $i$ determines a functor $\LOp\times\Phi^\op\to\cS$
that yields $\delta:\LOp\to P(\Phi)$  by adjunction.
This means that, for $P\in\LOp$ and $F\in\Phi$,
one has
\begin{equation}
\label{eq:delta}
\delta(P)(F)=\Map_\LOp(i(F),P).
\end{equation}

\begin{thm}
\label{thm:equivalence}
The functors defined above give a pair of quasi-inverse functors
\begin{equation}
\label{eq:deltalambda}
\delta:\LOp\rlarrows\DOp:\lambda.
\end{equation}
\end{thm}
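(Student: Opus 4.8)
The plan is to show that the two composites $\lambda\delta$ and $\delta\lambda$ are naturally equivalent to the respective identities, after first checking that $\lambda$ and $\delta$ genuinely land in the claimed subcategories. For this well-definedness, I would verify that $\lambda(D)=\omega^*(D)$, viewed in $P(\bF)$, satisfies the fibrous conditions (Fib1)--(Fib3) together with completeness, translating the dendroidal Segal conditions (D1), (D3) through the combinatorics of $\omega$ (a level forest $\omega(A)$ decomposes into corollas and bare edges exactly as dictated by the pointed maps $\alpha_i$), while the completeness condition (D2) matches the completeness built into $\LOp\subset\Cat_{/\Fin_*}$. In the other direction I would check that $\delta(P)$ satisfies (D1)--(D3): the disjoint-union formula $i(\coprod T_i)=\coprod i(T_i)$ gives (D3), the decomposition of a tree $T$ at an inner edge $b$ into $T_b$ and $T^b$ realizes the Segal map of (D1) as a mapping space out of a gluing of free operads, and (D2) again comes from the fact that $P$ is a complete Segal space.

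With both functors well defined, I would reduce everything to a single natural equivalence. Unwinding the definitions,
\[
\lambda\delta(P)(A)=\delta(P)(\omega(A))=\Map_\LOp(i\omega(A),P),
\qquad
\delta\lambda(D)(F)=\Map_\LOp(i(F),\lambda(D)),
\]
so both composites are governed by the key formula
\begin{equation*}
\Map_\LOp(i\omega(A),P)\ \simeq\ P(A),
\tag{$\star$}
\end{equation*}
natural in $A\in\bF$ and $P\in\LOp$, where $P(A)$ denotes the value at $A$ of $P$ regarded as a presheaf on $\bF$. Granting $(\star)$, the identity $\lambda\delta\simeq\id_\LOp$ is immediate. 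For $\delta\lambda\simeq\id_\DOp$, the formula $(\star)$ applied to $P=\lambda(D)$ gives $\delta\lambda(D)(\omega(A))\simeq\lambda(D)(A)=D(\omega(A))$, i.e.\ an equivalence on all forests of the form $\omega(A)$; since every forest $F$ is a retract of some $\omega(A)$ by Lemma~\ref{lem:retract} and the comparison is a natural transformation of presheaves on $\Phi$, a retract of an equivalence of arrows is again an equivalence, and the statement propagates to every $F$.

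The crux, and the main obstacle, is the formula $(\star)$. The natural map $u_A\colon\Map_\LOp(i\omega(A),P)\to P(A)$ is restriction along the tautological $A$-simplex of the free operad $i\omega(A)$, and the content of $(\star)$ is that $u_A$ is an equivalence. On generators this is transparent: for $A=\langle n\rangle$ one recovers the Segal product of (Fib2), and for a one-step active simplex $\omega(A)$ is a single corolla, so that $u_A$ is exactly the identification of operations provided by (Fib3). The difficulty is to promote these pointwise identifications to a coherent natural equivalence. For this I would assemble the $u_A$ into a morphism $\lambda\delta(P)\to P$ in $\LOp$, which is an isomorphism on colors (the case $A=\langle 1\rangle$, where $\omega(A)=\eta$ and $i(\eta)$ is the unit), and then apply the reconstruction theorem, Theorem~\ref{thm:reconstruction}: it suffices to see that this map induces an equivalence on the $\infty$-categories of algebras in $\cS$.

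The heart of the argument is therefore the algebra computation: one must identify the algebras over $\lambda\delta(P)$ and over $P$ and check that the comparison is an equivalence. Here the free, conventional nature of the operads $i(F)$ is essential, so that the Pavlov--Scholbach rectification (see~\ref{sss:PSrectification}), $\Alg_P(\cC)_\infty=\Alg_{\ell(P)}(\cC_\infty)$, can be used to reduce the $\infty$-operadic algebra categories to strict algebras over the conventional forest operads $o(F)$, where the matching with the dendroidal side becomes combinatorial. I expect this reduction---rather than the formal bookkeeping that assembles the composites---to be where essentially all of the work lies.
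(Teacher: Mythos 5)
Your overall architecture agrees with the paper's: the well-definedness checks for $\delta$ and $\lambda$, the reduction of both composites to the single formula $(\star)$ (which is exactly the paper's Proposition~\ref{prp:op-eq}), and the retract argument via Lemma~\ref{lem:retract} for the unit (the paper records the identification $\lambda|_\Phi\simeq i$ separately as Proposition~\ref{prp:psiisi}, which you use implicitly when comparing $\Map_\DOp(F,D)$ with $\Map_\LOp(i(F),\lambda(D))$). The gap lies in how you deploy the reconstruction theorem to prove $(\star)$. You fix $P$, assemble the maps $u_A$ into a counit $\lambda\delta(P)\to P$, and propose to check that this single map induces an equivalence on algebra categories, reducing ``via Pavlov--Scholbach to strict algebras over the conventional forest operads $o(F)$''. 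But rectification (Theorem~\ref{thm:rect}) applies only to $\Sigma$-free conventional operads in sets, and neither $P$ nor $\lambda\delta(P)$ is of this form for a general Lurie operad $P$; there is no visible handle on $\Alg_{\lambda\delta(P)}(\cS)$ short of already knowing $\lambda\delta(P)\simeq P$, so the step you yourself identify as the heart of the argument stalls as stated.

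The correct slicing is the opposite one: fix $A$ and quantify over $P$. Since $u_A$ is natural in $P$, asking that $u_A$ be an equivalence for \emph{every} $P$ is precisely asking that the map $s'_A:A'\to i(\omega(A))$ be an equivalence in $\LOp$, where $A'$ is the operadic localization of $A^\flat$ (see \ref{sss:lop-bousfield}). This is a map between two concrete, computable operads: $\Alg_{A'}(\cS)=\Fun_{\Fin_*}(A,\cS)$ essentially by definition of operadic equivalence, while $\Alg_{i(\omega(A))}(\cS)$ is accessible to Pavlov--Scholbach because $o(\omega(A))$ \emph{is} a free conventional operad. This is exactly how the paper proves Proposition~\ref{prp:op-eq} in \ref{ss:proof-op-eq}: reconstruction applied to $s'_A$, followed by a pruning induction on the simplex ($A=B\sqcup^v C$), which in turn needs Lemma~\ref{lem:correctfp} to guarantee that the strict fiber products of simplicial categories of fibrant-cofibrant algebras compute the corresponding fiber products in $\Cat$ --- an ingredient absent from your sketch. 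Note also that the same reconstruction-plus-rectification mechanism is what proves Proposition~\ref{prp:cuttingtree}, i.e.\ $i(T)=i(T^b)\sqcup^{i(b)}i(T_b)$, which your verification of (D1) for $\delta(P)$ quietly assumes. With the reconstruction step re-aimed at $s'_A$ rather than at the counit, the rest of your plan goes through as in the paper.
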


\subsubsection{}
In Section~\ref{sec:monoidal} we will extend this equivalence to an equivalence of symmetric monoidal categories.
The proof of Theorem~\ref{thm:equivalence} is presented in 
\ref{ss:inDOp}--\ref{ss:phipsi} below.
We will first of all verify that $\delta(P)\in\DOp$ for any $P\in\LOp$
and that $\lambda$ carries $\DOp$ to $\LOp$.
Then we will construct equivalences $\lambda\circ\delta\to\id$ and
$\id\to\delta\circ\lambda$.

\subsection{The functor $\delta$ has image in $\DOp$}
\label{ss:inDOp}
The category $\DOp$ is a full subcategory of $P(\Phi)$, so we only
have to verify that, for $L\in\LOp$, the presheaf $\delta(L)$ satisfies the conditions (D1), (D2) and (D3). The functor $i$ carries 
a finite coproduct of forests  to the corresponding coproduct in $\LOp$
(since both $o$ and $\ell$ above preserve coproducts).
Also, for an inner edge $b$ of a tree $T$, Proposition~\ref{prp:cuttingtree} below claims that 
$i(T)$ is the colimit of the diagram $i(T^b)\leftarrow i(b)\to i(T_b)$,
where $T^b$ and $T_b$ are two halves of the tree $T$ obtained by cutting $T$ along $b$.
These two facts immediately prove the conditions (D1) and (D3). 
It remains to verify (D2). The simplicial space $\iota^*\circ\delta(L)$ is just the image of $L$ under the functor
$P(\Delta_{/\Fin_*})\to P(\Delta)$ defined by $\langle 1\rangle\in\Fin_*$.
It is complete as $L$ represents a category over $\Fin_*$.

\subsection{The functor $\lambda$ has image in $\LOp$.}
\subsubsection{}
Let us, first of all, verify that $\lambda(D)\in\Cat_{/\Fin_*}$ for any $D\in\DOp$.
We have to verify that $\lambda(D)$, considered as an object of
$P(\Delta_{/\Fin_*})$,  
satisfies the Segal condition and is complete.

For $A:[n]\to\Fin_*$ we denote by $A_i$ the composition
$\{i\}\to[n]\stackrel{A}{\to}\Fin_*$ and by $A_{i-1,i}$ the composition
$[1]\stackrel{\{i-1,i\}}{\longrightarrow}[n]\stackrel{A}{\to}\Fin_*$.
The Segal condition for $\lambda(D)$ means that the natural map 
$$
\lambda(D)(A)\to
\lambda(D)(A_{01})
\times_{\lambda(D)(A_1)}\ldots\times_{\lambda(D)(A_{n-1})}
\lambda(D)(A_{n-1,n})
$$ is an equivalence.
This easily follows from the Segal properties (D1) and (D3) for $D$
formulated in~\ref{sss:dop}.

By Lemma~\ref{lem:completeness-fiber} applied to $\Fin_*$ viewed as a
complete Segal space, completeness of 
$\lambda(D)$ means that for any $I_*\in\Fin_*$ 
and the map $\iota_I:\Delta\to\Delta_{/\Fin_*}$ 
carrying $[n]\in\Delta$ to $[n]\to[0]\stackrel{I_*}{\to}\Fin_*$, the map
$\iota_I^*:P(\Delta_{/\Fin_*})\to P(\Delta)$ carries $\lambda(D)$
to a complete Segal space. Denote $D_1=\iota^*(D)$, where
$\iota:\Delta\to\Phi$ is defined in~\ref{sss:dop}. This is the
complete Segal space representing the category underlying $D\in\DOp$.
Since $\iota_I^*(\lambda(D))=D_1^I$, it is a complete Segal space.
 Thus, $\lambda(D)$ is a category over 
$\Fin_*$. We
will denote it explicitly by $p:\lambda(D)\to \Fin_*$.

Let us now verify that $p:\lambda(D)\to\Fin_*$ is fibrous. The fiber of 
$p$ at $I_*$ is $\iota_I^*(\lambda(D))=D_1^I$.  

(Fib1) 
Given $\alpha:\langle m\rangle\to\langle n\rangle$ inert,
the base change $\lambda(D)_\alpha:=[1]\times_{\Fin_*}\lambda(D)$
is a category over $[1]$ with fibers $D_1^m$ and $D_1^n$ at $0$ and
$1$ respectively. This is obviously a cocartesian fibration classified
by the projection $p_\alpha:D_1^m\to D_1^n$ determined by the inert 
$\alpha$.
Therefore, $\alpha$ has a locally cocartesian lifting $a:x\to p_\alpha(x)$
for each object $x\in D_1^m$. It is now easy to verify the condition of 
Lemma~\ref{sss:cocartesian} that shows that any such $a$ is in fact
cocartesian.

(Fib2) The inert maps $\rho^i:\langle n\rangle\to\langle 1\rangle$
give rise to an equivalence $\lambda(D)_n\to\prod\lambda(D)_1$. This 
is straightforward.

(Fib3) 
It remains to verify the last property of fibrous objects. Fix
$A:[1]\to\Fin_*$ defined by an arrow 
$f:\langle m\rangle\to\langle n\rangle$.
 Given $x\in\lambda(D)_m$ and $y\in\lambda(D)_n$, the map space 
$\Map^f_{\lambda(D)}(x,y)$
can be expressed as the fiber of the natural map
$$
\lambda(D)(A)=D(\omega(A))\to\lambda(D)(m)\times\lambda(D)(n)
$$
at $(x,y)$.
Applying the axiom (D3) to the forest $\omega(A)$, we deduce the
required decomposition 
$$
\Map^f_{\lambda(D)}(x,y)\to\prod_i\Map^{\rho^i\circ f}_{\lambda(D)}(x,\rho^i(y)).
$$

\subsubsection{}
\label{sss:psionarrows}

$\LOp$ is not a full subcategory of $P(\Delta_{/\Fin_*})$. This means that we have to verify that, given a map $f:D\to D'$ with $D,D'\in\DOp$,
the induced map $\omega^*(f):\omega^*(D)\to\omega^*(D')$ preserves 
the inerts. This immediately follows from the description of inerts given above: if $f:\langle m\rangle\to\langle n\rangle$ is inert
and if $f:D\to D' $ is a map, it induces a commutative square
$$
\xymatrix{
&D_1^m\ar[r]\ar[d] &D_1^n\ar[d]\\
&D_1^{\prime m}\ar[r] &D_1^{\prime n}
}
$$
with the vertical arrows induced by $f$ and the horizontal arrows
being the projections determined by $f$.

\subsection{An equivalence $\lambda\circ\delta\to\id$}
\label{ss:lambdadelta}
$ $

In this subsection we construct an equivalence of functors
$\beta:\lambda\circ\delta\to\id$. 
The construction uses, for any $A\in\bF$, the 
 canonical section $s_A:A\to j(\omega(A))$
in $P(\bF)$, where $j:\Phi\to P(\bF)$ is the composition 
$\Phi\stackrel{i}{\to}\LOp\stackrel{}{\to}P(\bF)$, see diagram
(\ref{eq:thediagram}).
In more detail, for an operad
 $P$ in sets the corresponding object $\ell(P)$ in $\LOp$ can be viewed as a presheaf on $\F$
 via the embedding $\LOp\hookrightarrow P(\F)$. By the description
given in~\ref{ss:lop}, the value of
this presheaf  at $A\in\F$
is precisely the set of operad maps $o(\omega(A))\to P$.
This yields, for $P=o(\omega(A))$, a canonical section $s_A:A\to j(\omega(A))$.

We will deduce that $\beta$ is an equivalence from the following result
to be proven in \ref{ss:proof-op-eq}.
 
\begin{prp}
\label{prp:op-eq}
For  $A\in\bF$ and $L\in\LOp$ the canonical section
\begin{equation}
\label{eq:beta-ind}
s_A:A\to j(\omega(A))
\end{equation}
in $P(\bF)$
induces an equivalence
$$
\Map_\LOp(i\circ\omega(A),L)\to\Map_{P(\bF)}(A,L).
$$
\end{prp}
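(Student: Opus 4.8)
The plan is to identify both sides with the value $L(A)$ and run a d\'evissage on the forest $\omega(A)$. By the Yoneda lemma the right-hand side is simply $\Map_{P(\bF)}(A,L)=L(A)$, the value at $A$ of the presheaf on $\bF$ attached to $L\in\LOp$; recall that for $A:[n]\to\Fin_*$ this is the space of $n$-simplices of the total category of $L$ lying over $A$. The section $s_A$ is the universal element, namely the identity operad map $o(\omega(A))\to o(\omega(A))$ viewed as a point of $j(\omega(A))(A)$, so that $s_A^*$ sends $g:i\omega(A)\to L$ to its underlying $A$-simplex. It therefore suffices to produce, naturally in $A$, an identification $\Map_\LOp(i\omega(A),L)\simeq L(A)$ realized by this evaluation.

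First I would decompose $i\omega(A)$ inside $\LOp$. Since $i$ carries coproducts of forests to coproducts and, by Proposition~\ref{prp:cuttingtree}, carries the cutting of a tree along an inner edge $b$ to the pushout $i(T)=i(T_b)\amalg_{i(b)}i(T^b)$, the operad $i\omega(A)$ is an iterated pushout, along edges, of the free operads $i(C_v)$ on the corollas of its vertices and the free operads $i(\eta)$ on its edges. As $\Map_\LOp(-,L)$ turns these colimits into the corresponding limits, $\Map_\LOp(i\omega(A),L)$ becomes a limit of corolla- and edge-mapping spaces. The combinatorial point is that this diagram matches the Segal/fibrous description of $L(A)$. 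Formally I would induct on $n$: cutting $\omega(A)$ along the level-$(n-1)$ edges exhibits $i\omega(A)=i\omega(A')\amalg_{i\omega(A_{n-1})}i\omega(A_{n-1,n})$ in $\LOp$, with $A'=A|_{[n-1]}$, matching the Segal decomposition $L(A)=L(A')\times_{L(A_{n-1})}L(A_{n-1,n})$ valid for $L$; this reduces the claim to $n\le 1$. For $n=1$ given by $\alpha:I_*\to J_*$ one has $\omega(A)=\coprod_{j\in J}C_{v_j}\sqcup\coprod_{\alpha(b)=*}\eta_b$, and (Fib2)/(Fib3) split $L(A)$ accordingly, reducing to a single corolla and a single edge.

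This leaves two base cases, to be checked directly from the description of $\ell$ in~\ref{sss:lop}. For the edge, an inert-preserving map $i(\eta)=\ell(o(\eta))\to L$ is exactly a choice of a color of $L$, so $\Map_\LOp(i(\eta),L)\simeq\cP_1=L(\langle 1\rangle)$. For the corolla $C_m$, i.e. $A:[1]\to\Fin_*$ equal to $\langle m\rangle\to\langle 1\rangle$, the operad $o(C_m)$ is freely generated by one $m$-ary operation, so a map $i(C_m)\to L$ in $\LOp$ is a choice of $m+1$ colors together with an $m$-ary operation among them; this space is precisely $L(A)$, the space of arrows of $L$ over $\langle m\rangle\to\langle 1\rangle$. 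Assembling the base cases through the limit above yields the equivalence, and tracking the universal element $s_A$ through each pushout and coproduct shows that it is $s_A^*$ that implements it.

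The hard part will be the corolla base case together with the bookkeeping of the two superimposed decompositions: one must verify that mapping out of the free operad $\ell(o(C_m))$ inside the inert-preserving category $\LOp$ reproduces exactly the operation space $L(A)$, and not the larger space of arbitrary maps over $\Fin_*$; and that the forest-cutting colimit diagram is compatible, level by level and vertex by vertex, with the simplicial Segal and fibrous splittings of $L(A)$ in a way that is natural in $A$ and realized by the universal section $s_A$.
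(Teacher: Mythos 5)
Your reduction has the same skeleton as the paper's own argument (the paper calls it ``pruning'', see~\ref{sss:pruning}: cut $A$ into $B=A\circ d_n$ and $C=A\circ d_0^{n-1}$ glued over the vertex $A_{n-1}$, and induct down to $n\le 1$), transposed from categories of algebras to mapping spaces; that transposition is legitimate, since Proposition~\ref{prp:cuttingtree} and preservation of coproducts by $i$ do turn $\Map_\LOp(i\omega(A),L)$ into the limit matching the Segal decomposition of $L(A)$. But there is a genuine gap at exactly the point you flag as ``the hard part'': the corolla base case. The assertion that, because $o(C_m)$ is freely generated by one $m$-ary operation, a map $i(C_m)=\ell(o(C_m))\to L$ in $\LOp$ ``is a choice of $m+1$ colors together with an $m$-ary operation'' is not something one can check ``directly from the description of $\ell$ in~\ref{sss:lop}''. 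That description only tells you the simplices of $\ell(o(C_m))$ as a category over $\Fin_*$; the $1$-categorical freeness of $o(C_m)$ gives the universal property for maps into \emph{conventional} operads, not a computation of the space of inert-preserving functors into an arbitrary fibrous $L\to\Fin_*$. What you need is precisely that $A^\flat\to i(C_m)^\natural$ is an operadic equivalence in $\Cat^+_{/\Fin_*^\natural}$ (in the sense of~\ref{sss:lop-bousfield}) for $A$ the active edge $\langle m\rangle\to\langle 1\rangle$ --- which is the $n=1$ instance of the proposition itself, i.e.\ its entire $\infty$-categorical content. Your proposal is circular there: it proves the proposition assuming its crucial special case.

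The paper closes this gap with two nontrivial inputs that your sketch never invokes and offers no substitute for: the reconstruction Theorem~\ref{thm:reconstruction}, which reduces the claimed equivalence $A'\to i(\omega(A))$ (where $A^\flat \to A'$ is the operadic localization) to an equivalence of the categories of $\cS$-valued algebras, and the Pavlov--Scholbach rectification Theorem~\ref{thm:rect} (with Lemma~\ref{lem:correctfp} to make the strict fiber products of model categories compute fiber products in $\Cat$), which identifies $\Alg_{i(\omega(A))}(\cS)$ with the $\infty$-category underlying $\Alg_{o(\omega(A))}(\sSet)$, where the $n=1$ case becomes the elementary comparison of arrow categories $\prod_i X_i\to Y$. (Your edge case is fine: $\Map_\LOp(\Triv,L)\simeq L_1^{\eq}$ is available from \cite{L.HA}, 2.1.3.) Note also that you gain no economy by routing through Proposition~\ref{prp:cuttingtree}: its proof in the paper uses exactly the reconstruction-plus-rectification machinery, and moreover your level-$(n-1)$ cut of a forest along \emph{several} edges requires an (easy but unstated) iterated version of it. So either import the paper's two theorems to prove the corolla case --- at which point your argument essentially becomes the paper's proof rearranged --- or supply an independent combinatorial proof that $A^\flat\to i(C_m)^\natural$ is an operadic equivalence, which is known (cf.\ the analyses in \cite{HHM}, \cite{CHH}) to be genuinely laborious.
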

 
\subsubsection{}
\label{sss:342}
Just for now, let us write $g:\LOp\hookrightarrow P(\F)$ for the embedding functor.
We will first define a morphism of functors
$\beta':g\circ\lambda\circ\delta\to g$ from $\LOp$ to 
$P(\bF)$, 
and then will show that $\beta'$ factors through 
a $\beta:\lambda\circ\delta\to\id$.

Using the standard equivalence
$$
\Fun(A,\Fun(B,C))=\Fun(A\times B,C),
$$
we will define instead an equivalence 
$\wt\beta':\wt{g\circ\lambda\circ\delta}\to\wt{g}$ of functors from 
$\LOp\times\bF^\op$ to $\cS$.
The functor $\wt{g\circ\lambda\circ\delta}$ carries
$(L,A)\in\LOp\times\bF^\op$ to
$$\Map_\DOp(\omega(A),\delta(L))=\Map_\LOp(i\circ\omega(A),L)
\subset\Map_{P(\bF)}(j\circ\omega(A),g(L)),$$
whereas $\wt g$ carries $(L,A)$ to 
$\Map_{P(\bF)}(A,g(L))$.

The functor $\wt\beta'$ is now defined as the precomposition
with $s_A:A\to j\circ\omega(A)$. According to Proposition~\ref{prp:op-eq},
$\wt\beta'$, and, therefore, $\beta'$, is an equivalence.

Since both $g\circ\lambda\circ\delta(L)$ and $g(L)$ belong to
$\LOp\subset P(\bF)$, the natural equivalence 
$\beta'_L:g\lambda\delta(L)\to g(L)$ between 
them also belongs to $\LOp$, hence is the image under $g$ of a unique equivalence $\beta_L:\lambda\delta(L)\to L$.
(Note that the inclusion $\LOp\to P(\bF)$ is fully faithful on equivalences since equivalences automatically preserve cocartesian liftings of inerts.)

\subsubsection{}
Note, for further application, the following consequence of
Proposition~\ref{prp:op-eq} which relates two realizations of a forest
as an operad, one in $\DOp$ and the other in $\LOp$.
Define a morphism of functors $\theta:\lambda|_\Phi\to i$ from $\Phi$
to $\LOp$ so that its composition with $g:\LOp\to P(\bF)$
is given by the natural transformation of functors 
$\Phi\times\bF^\op\to\cS$ defined as in \ref{sss:342},
\begin{eqnarray}
\nonumber\Map_{P(\bF)}(A,\omega^*(F))= \Map_\Phi(\omega(A),F)=\Map_\LOp(i\circ\omega(A),i(F))\to \\
\nonumber\Map_{P(\bF)}(A,j(F)),
\end{eqnarray}
where $A\in\bF$ and $F\in\Phi$.

\begin{prp}
\label{prp:psiisi}
The morphism of functors
$\theta:\lambda|_\Phi\to i$
defined above, from the restriction of $\lambda:\DOp\to\LOp$ to $\Phi\hookrightarrow\DOp$ into 
$i:\Phi\hookrightarrow\LOp$,
is an equivalence.
\end{prp}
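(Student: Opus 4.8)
The plan is to deduce this directly from Proposition~\ref{prp:op-eq}, which I may assume. Write $g:\LOp\hookrightarrow P(\bF)$ for the embedding. First I would reduce the claim to showing that $g\circ\theta$ is an equivalence of functors $\Phi\to P(\bF)$. This reduction is legitimate: both $\lambda|_\Phi$ and $i$ take values in $\LOp$, and, exactly as in the argument of~\ref{ss:lambdadelta}, the functor $g$ reflects equivalences between objects of $\LOp$. Indeed $g$ is faithful, and on the maximal subgroupoids it is full, since an equivalence automatically preserves cocartesian liftings of inerts; thus a morphism of $\LOp$ whose image under $g$ is invertible already has its inverse in $\LOp$. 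Hence it suffices to prove that each component $g(\theta_F)$ is an equivalence in $P(\bF)$.

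Next I would read off $g\circ\theta$ from its very definition. Tested against $A\in\bF$, its value is the composite
\[
\Map_\Phi(\omega(A),F)=\Map_\LOp(i\circ\omega(A),i(F))\longrightarrow\Map_{P(\bF)}(A,i(F)),
\]
where the first identification is the full faithfulness of $i=\ell\circ o$, and the second arrow is precomposition with the canonical section $s_A:A\to j(\omega(A))=g(i(\omega(A)))$. The first map is an equivalence because $o:\Phi\to\Op(\Set)$ is a full embedding and $\ell$ identifies $\Op(\Set)$ with a full subcategory of $\LOp$ (see~\ref{sss:lop}), so that $i$ is fully faithful. The second map is literally the arrow of Proposition~\ref{prp:op-eq} specialized to $L=i(F)$, hence an equivalence. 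Composing, $g(\theta_F)$ is an equivalence for every $F$, and by the reduction above $\theta$ is an equivalence.

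In effect there is no genuine obstacle here once Proposition~\ref{prp:op-eq} is granted: the present statement is a direct corollary, with the whole substance carried by that proposition. The only points that require care are bookkeeping. One must confirm that the representable $Y(F)\in P(\Phi)$ (the dendroidal nerve of the operad $o(F)$) indeed lies in $\DOp$, so that the restriction $\lambda|_\Phi$ makes sense; this is the standard Segal and completeness property of operad nerves and is already built into the embedding $\Phi\hookrightarrow\DOp$. The other is to match the second leg of the composite defining $\theta$ with the map of Proposition~\ref{prp:op-eq}, by taking $L=i(F)$ and checking that both are given by restriction along $s_A$. Everything else is purely formal.
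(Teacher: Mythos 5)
Your proposal is correct and coincides with the paper's own (essentially immediate) argument: the paper defines $\theta$ precisely so that $g\circ\theta$ evaluated at $(F,A)$ is the composite of the full-faithfulness identification $\Map_\Phi(\omega(A),F)=\Map_\LOp(i\circ\omega(A),i(F))$ with restriction along $s_A$, and then invokes Proposition~\ref{prp:op-eq} with $L=i(F)$, together with the observation from~\ref{ss:lambdadelta} that the inclusion $\LOp\hookrightarrow P(\bF)$ reflects equivalences since inverses automatically preserve cocartesian liftings of inerts. Your two ``bookkeeping'' checks (representables of $\Phi$ lie in $\DOp$, and the second leg matches the map of Proposition~\ref{prp:op-eq}) are exactly the points the paper leaves implicit, so nothing is missing.
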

\qed

\subsection{An equivalence $\id\to\delta\circ\lambda$}
\label{ss:phipsi}
$  $

In this subsection we construct an equivalence of functors 
$\alpha:\id\to\delta\circ\lambda$. This will complete the proof of the 
equivalence of $\LOp$ with $\DOp$.

Let us temporarily write $G:\DOp\to P(\Phi)$ for the embedding.
Since this embedding is fully faithful, it is sufficient to construct an equivalence
$\alpha':G\to G\circ\delta\circ\lambda$ of functors
from $\DOp$ to $P(\Phi)$.
As in \ref{ss:lambdadelta}, we will construct instead an 
equivalence of functors 
$$
\wt\alpha':\wt G\to\wt{G\circ\delta\circ\lambda}
$$
from $\DOp\times\Phi^\op$ to $\cS$. The functor $\wt G$ carries
$(D,F)\in\DOp\times\Phi^\op$ to $\Map_\DOp(F,D)$ whereas
$\wt{G\circ\delta\circ\lambda}$ carries $(D,F)$ to 
$\Map_\LOp(i(F),\lambda(D))=\Map_\LOp(\lambda(F),\lambda(D))$, the last 
equivalence following from \ref{prp:psiisi}.

We define the morphism $\wt\alpha'$ simply as the morphism
\begin{equation}
\label{eq:theta}
\Map_\DOp(F,D)\to\Map_\LOp(\lambda(F),\lambda(D))
\end{equation}
induced by $\lambda$.  It remains to verify that (\ref{eq:theta}) is an equivalence. By Lemma~\ref{lem:retract} we can choose $A\in\bF$ so that $F$ is a retract of 
$\omega(A)$. Then the composition
\begin{eqnarray}
\nonumber\Map_\DOp(\omega(A),D)\to\Map_\LOp(\lambda(\omega(A)),\lambda(D))=\Map_\LOp(i\circ\omega(A),\lambda(D))\to\\
\nonumber\Map_{P(\bF)}(A,\omega^*(D)),
\end{eqnarray}
is an equivalence. The last map in the composition is also an equivalence by \ref{prp:op-eq}, so $\wt\alpha'$ is an equivalence
for $\omega(A)$, and, therefore, for $F$.

\section{Operadic algebras}
\label{sec:opalgebras}

\subsection{Reconstruction}

Recall the category of Lurie operads $\LOp$ is a Bousfield localization  of $\Cat^+_{/\Fin^\natural_*}$. The latter category is 
$\Cat$-enriched with the category of functors from $X$ to $Y$ defined
by the formula
$$
\Map_\Cat(K,\Fun^\natural(X,Y))=\Map_{\Cat^+_{/\Fin^\natural_*}}(X\times K^\flat,Y).
$$

This $\Cat$-enrichment is used in the definition of the category of operad algebras: given a pair $\cP,\cQ\in\LOp$, the category 
of $\cP$-algebras in $Q$, $\Alg_\cP(\cQ)$, is defined as 
$\Fun^\natural(\cP,\cQ)$. 
In this subsection we prove that
a Lurie operad $\cP\in\LOp$ can be reconstructed from the category
of $\cP$-algebras in $\cS$ (which is a symmetric monoidal category and therefore can be considered as an object 
in $\LOp$). More precisely, one has the following.

\begin{thm}
\label{thm:reconstruction}
Let $f:\cP\to\cQ$ be a morphism of operads which is essentially surjective on colors. Assume that the functor
$$
f^*:\Alg_\cQ(\cS)\to\Alg_\cP(\cS)
$$
is an equivalence. Then $f$ is an equivalence of operads.
\end{thm}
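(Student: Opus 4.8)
The plan is to prove the statement by showing that $f$ is \emph{fully faithful} on all spaces of multimorphisms, the essential surjectivity on colors being given. First I would recall the standard criterion for equivalences in $\LOp$ (cf.~\cite{L.HA}): a morphism of operads that is essentially surjective on colors is an equivalence precisely when, for every tuple of colors $x_1,\dots,x_n,y$ of $\cP$, the induced map $\operatorname{Mul}_\cP(x_1,\dots,x_n;y)\to\operatorname{Mul}_\cQ(f x_1,\dots,f x_n;f y)$ is an equivalence; the case $n=1$ together with essential surjectivity already yields the equivalence of the underlying color $\infty$-categories. So the whole problem reduces to deducing these equivalences of multimorphism spaces from the hypothesis on algebras.

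Next I would bring in the free/forgetful adjunction. Writing $U^\cP:\Alg_\cP(\cS)\to\Fun(\mathrm{Col}\,\cP,\cS)$ for the underlying-diagram functor and $\Free^\cP$ for its left adjoint, $U^\cP$ is monadic (Barr--Beck--Lurie) with monad $T^\cP=U^\cP\Free^\cP$, and similarly for $\cQ$. The equivalence $f^*$ fits in a square commuting with the restriction $r$ along $f:\mathrm{Col}\,\cP\to\mathrm{Col}\,\cQ$ on underlying diagrams; since $f^*$ is an equivalence, passing to left adjoints gives $\Free^\cP\simeq f^*\circ\Free^\cQ\circ r_!$, with $r_!$ the left Kan extension along $f$, and hence an identification of monads $T^\cP\simeq r\circ T^\cQ\circ r_!$. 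In particular $f^*\Free^\cQ_{f c}\simeq\Free^\cP_c$ for each color $c$, and because $f$ is essentially surjective on colors, $r$ is conservative and the family $\{\Free^\cQ_{f c}\}_c$ exhausts, up to equivalence, the free algebras $\{\Free^\cQ_d\}_d$ on single generators.

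It then remains to read the multimorphism spaces off this monadic data. The point is that $T^\cP$ is the \emph{analytic} (polynomial) monad associated to the symmetric sequence of operation spaces of $\cP$: evaluating $T^\cP$ on a coproduct of one-point generators placed at colors $x_1,\dots,x_n$ and extracting the summand of multidegree $(1,\dots,1)$ at the color $y$ recovers precisely $\operatorname{Mul}_\cP(x_1,\dots,x_n;y)$. Since this extraction is built only from the free and forgetful functors, the identification $T^\cP\simeq r\,T^\cQ\,r_!$ transports it to the corresponding extraction for $\cQ$; matching colors via essential surjectivity yields $\operatorname{Mul}_\cP(x_1,\dots,x_n;y)\simeq\operatorname{Mul}_\cQ(f x_1,\dots,f x_n;f y)$ for all tuples, which by the first paragraph completes the proof. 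Equivalently, one may phrase this as the statement that the equivalence of algebra categories, being compatible with free algebras and underlying diagrams, induces an equivalence of the associated analytic monads over matching colors, whence of the operads.

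The main obstacle I anticipate is exactly this extraction of operation spaces at the space level: isolating the multidegree-$(1,\dots,1)$ summand together with all its symmetric-group coherences, and verifying that this construction is genuinely invariant under the transport $T^\cP\simeq r\,T^\cQ\,r_!$, rather than only after some lossy linearization that would remember just the homology of $\operatorname{Mul}$. Concretely one wants the coefficients of an analytic functor back from the functor itself; I would handle this by working with \emph{weighted} free algebras, placing the generators in distinct weights of $\bN^n$ so that the multidegree $(1,\dots,1)$ part splits off as an honest summand, and checking that the weighted construction is still controlled by the hypothesis on $\cS$-algebras, or else by invoking the equivalence between $\infty$-operads and analytic monads, under which the monad identification becomes an equivalence of operads directly. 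Some care is also needed to ensure $f$ does not collapse colors; this is forced by the essential surjectivity of $f^*$ on algebras and is recorded precisely by the $n=1$ case of the extraction.
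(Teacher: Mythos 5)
Your overall skeleton is the same as the paper's: the paper also reduces the theorem to showing that $f$ induces equivalences on all multimorphism spaces, and also extracts these from the composite of the free and forgetful functors transported along the equivalence $f^*$. Two bookkeeping differences are worth noting. First, the paper sidesteps your $r_!$/left Kan extension entirely by fixing a \emph{recoloring} $r:I\to\cP_1$ from a set $I$ surjective on equivalence classes of colors; essential surjectivity of $f$ makes $f\circ r$ a recoloring of $\cQ$ with the \emph{same} $I$, so both forgetful functors land in one and the same $\cS^I$ and the identification becomes simply $G_r F_r\simeq G_{f\circ r}F_{f\circ r}$. Second, your description of $T^\cP$ as the analytic monad of the symmetric sequence of operation spaces is not assumed but proved there: the free algebra is computed as a colimit over the groupoid $\Fin^\eq_{/I}$ of terms $\cP(r\circ\gamma,d)\times_{\Aut_I(J)}\prod_{j\in J}X_{\gamma(j)}$, via the trivial operad $\Triv_X$ and a cofinality argument for active arrows.

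The genuine gap is exactly where you flagged it, and neither of your proposed repairs closes it. With one-point generators, the term indexed by a non-injective $\gamma$ (i.e.\ repeated colors) is the homotopy quotient $\cP(r\circ\gamma,d)_{h\Aut_I(J)}$, so the multidegree-$(1,\dots,1)$ piece is \emph{not} an honest summand, and an equivalence of such quotients does not yield an equivalence of the $\operatorname{Mul}$'s; your argument as written only produces $\operatorname{Mul}_\cP(\dots)_{h\Sigma}\simeq\operatorname{Mul}_\cQ(\dots)_{h\Sigma}$. The ``weights in $\N^n$'' device is not available: the hypothesis concerns algebras in plain $\cS$, and there is no grading on such algebras with which to split multidegrees --- ``checking that the weighted construction is still controlled by the hypothesis'' is precisely the missing argument. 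The fallback via the equivalence of $\infty$-operads with analytic monads is circular in the present context, since transporting the monad identification back to a statement about Lurie operads presupposes a comparison theorem of the very kind this reconstruction result is used to prove. The paper's resolution is elementary and stays inside the hypothesis: take the generators to be finite \emph{discrete} spaces with $|X_i|\geq|\gamma^{-1}(i)|$. Then injective labelings $J\hookrightarrow X$ over $I$ exist, the $\Aut_I(J)$-action on them is free, and hence $\cP(r\circ\gamma,d)$ splits off as an honest summand of $G_rF_r(X)(d)$ (indexed by a free orbit), compatibly on both sides; since the equivalence respects the coproduct decomposition over isomorphism classes of $\gamma$ and orbits of labelings, each map $\cP(r\circ\gamma,d)\to\cQ(r\circ\gamma,d)$ is an equivalence. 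Equivalently one may choose the recoloring $I$ with enough distinct indices over each color and place one-point generators at distinct indices --- this is the honest implementation of your ``distinct weights,'' with distinguishability supplied by the generating set rather than by a grading.
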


Note that the essential surjectivity condition cannot be dropped: the
embedding of a category into its Karoubian envelope induces an equivalence
of the categories of presheaves!
The proof of the theorem is given in \ref{sss:proof-reco}.

Note the following easy result.

\begin{lem}
\label{lem:opeq}
Let $\alpha:X\to Y$ be an operadic equivalence in 
$\Cat^+_{/\Fin^\natural_*}$. Then the map
$$
\Fun^\natural(Y,\cS)\to\Fun^\natural(X,\cS),
$$
where $\cS$ is considered as a Lurie operad, is an equivalence.
\end{lem}
\begin{proof}
Given $K\in\Cat$, the category $\cS^K=\Fun(K,\cS)$ has a cartesian symmetric monoidal structure, so it can be considered as an object of
$\LOp$. The operadic equivalence $\alpha:X\to Y$ induces
an equivalence
$$
\Map_{\Cat^+_{/\Fin^\natural_*}}(Y,\cS^K)\to
\Map_{\Cat^+_{/\Fin^\natural_*}}(X,\cS^K).
$$
Now the equivalence
$$
\Map_{\Cat^+_{/\Fin^\natural_*}}(X,\cS^K)=
\Map_{\Cat^+_{/\Fin^\natural_*}}(X\times K^\flat,\cS)
$$
yields an equivalence $\Map(K,\Fun^\natural(Y,\cS))=\Map(K,\Fun^\natural(X,\cS))$.
\end{proof}

\begin{rem}
Although Lemma~\ref{lem:???} is sufficient for our purposes, the following more general result can be proven in the same way.
Let $\cO$ be an arbitrary Lurie operad. Using a full embedding
of $\cO$ into a symmetric monoidal category $\hat\cO$, see~\ref{???} below, 
an operadic equivalence  $\alpha:X\to Y$ gives rise to an equivalence
$$
\Fun^\natural(Y,\cO)\to\Fun^\natural(X,\cO)
$$
for any $\cO$. Indeed, the induced symmetric monoidal structure on $\hat\cO^K$ defines an operad structure on the full subcategory
$\cO^K:=\Fun(K,\cO)\times_{\Fun(K,\Fin_*)}\Fin_*$ as in \ref{???}.
\end{rem}

\subsubsection{}

Let $I$ be a set and let $\cP$ be a Lurie operad.
A map $r:I\to\cP_1$ is called {\sl a recoloring} if it induces a surjective map on the equivalence classes of objects of $\cP_1$.
We define a {\sl recolored operad} as an operad $\cP$ endowed
with a recoloring $r:I\to\cP_1$.
Any map $r:I\to\cP_1$ defines a forgetful functor
$$G_r:\Alg_\cP(\cS)\to\cS^I.$$

A general theorem \cite{L.HA}, 3.1.3.5 implies that $G_r$ admits
a left adjoint functor of free $\cP$-algebra denoted 
$F_r:\cS^I\to\Alg_\cP(\cS)$.

We present below an explicit expression for the free algebra $F_r(X)$
where $p:X\to I$ is a map of sets, considered as a collection of 
(discrete) spaces 
$X_i=p^{-1}(i)\in\cS$. This is the free $\cP$-algebra generated by the 
set $X$ of objects such that the color of $x\in X$ is $r(p(x))$. 
Note that $\cP$-algebras with values in 
$\cS$ can be described by functors $A:\cP\to\cS$ that are {\sl monoid objects} in the sense of \cite{L.HA}, 2.4.2.1. 
Equivalently, this means that the left fibration $\cP_A\to\cP$ classified by $A$ is a left 
fibration of operads.

Following \cite{L.HA}, 2.1.1.20, we denote by $\Triv\subset\Fin_*$ the subcategory spanned by the inert arrows. This is the trivial operad on one color. 
For a given set $X$ we denote by $\Triv_X$ the coproduct of $X$ copies of the operad $\Triv$, so that $\Triv_X$ is the trivial operad on $X$ colors. 
The objects of $\Triv_X$ are finite sets over $X$ and the arrows are embeddings of these sets, considered as (inert) arrows in the opposite direction. 

The map $c:=r\circ p:X\to\cP_1$ extends to $c:\Triv_X\to\cP$, see
\cite{L.HA}, 2.1.3.6, and, therefore, gives rise to the functor
\begin{equation}
\bar c:\Triv_X^\op\to\Left(\cP)=\Fun(\cP,\cS)
\end{equation}
with values in the category of left fibrations over $\cP$, carrying 
$\alpha:U\to X$ in $\Triv_X$
to the left fibration $\cP_{c\alpha/}\to\cP$. We finally denote
\begin{equation}
\label{eq:Frx}
F_r(X)=\colim(\bar c)\in\Left(\cP).
\end{equation}
Rewriting (\ref{eq:Frx}) as a functor $F_r(X):\cP\to\cS$, we deduce the formula
\begin{equation}
\label{eq:Frc-2}
F_r(X)(d)=\colim(\bar c_d)
\end{equation}
where $\bar c_d:\Triv^\op_X\to\cS$ is the functor
carrying $\alpha\in\Triv_X$ to $\Map_\cP(c\circ\alpha,d)$.

This can be further rewritten as follows. Let $D_*$ be the image of $d\in\cP$ in $\Fin_*$. Define the category $\Triv_{X,D}$ whose objects
are the pairs $(\alpha:U\to X, \beta:U_*\to D_*)$ and morphisms
$(\alpha,\beta)\to(\alpha':U'\to X,\beta':U'_*\to D_*)$ defined by an 
embedding $U'\to U$ over $X$ so that the corresponding inert arrow
$U_*\to U'_*$ commutes with the $\beta$'s. One has an obvious forgetful functor $\phi:\Triv_{X,D}\to\Triv_X$ and a functor
$$
\bar c_{d,D}:\Triv^\op_{X,D}\to\cS
$$
carrying $(\alpha,\beta)$ to $\Map^\beta_\cP(c\circ\alpha,d)$.
The fibers of $\phi$ are discrete, so obviously $\bar c_d$ is a left
Kan extension of $\bar c_{d,D}$ along $\phi$. Therefore,
$F_r(X)(d)=\colim\bar c_d=\colim\bar c_{d,D}$.
The category $\Triv_{X,D}$ has a subcategory $\Triv^\act_{X,D}$
spanned by the pairs $(\alpha,\beta)$ with $\beta$ active. This is a groupoid. We denote by $\bar c_{d,D}^\act$ the restriction of
$\bar c_{d,D}$ to $(\Triv^\act_{X,D})^\op$. The embedding $\Triv^\act_{X,D}\to\Triv_{X,D}$ is cofinal
so that it induces an equivalence of colimits 
$$
\colim\bar c^\act_{d,D}\to\colim\bar c_{d,D}.
$$
We can finally reformulate the description of 
$F_r(X)(d)=\colim\bar c^\act_{d,D}$ as follows.
Let $\Triv_X^\eq$ be the maximal subgroupoid of $\Triv_X$
(this is just the groupoid of finite sets over $X$) and let $\Map_\cP^\act(x,y)$ denote
the space of active arrows in $\cP$ from $x$ to $y$. 
The forgetful functor $\Triv^\act_{X,D}\to\Triv^\eq_X$ having discrete fibers, the left Kan extension of $\bar c^\act_{d,D}$ along it
yields the functor
$$
\bar c_d^\act:(\Triv_X^\eq)^\op\to\cS
$$
assigning to $\alpha$ the space $\Map_\cP^\act(c\circ\alpha,d)$. We see that
\begin{equation}
\label{eq:Frc-3}
F_r(X)(d)=\colim(\bar c_d^\act).
\end{equation}
We need yet another version of the above formula.

The functor $\bar c_d^\act$ factors through
$\bar p:\Triv^\eq_X\to\Triv^\eq_I$
carrying $\alpha:U\to X$ to $p\circ\alpha$. Therefore, 
 the colimit of
$\bar c_d^\act$ can be rewritten as $\colim\bar X$
where $\bar X$ is the left Kan extension of $\bar c_d^\act$
with respect to $\bar p$. One easily sees that
$\bar X:(\Triv^\eq_I)^\op\to\cS$ is defined by the formula
\begin{equation}
\label{eq:barX}
\bar X(\gamma)=\Map^\act_\cP(r\circ\gamma,d)\times_{\Aut_I(U)}\Hom_I(U,X).
\end{equation}
for $\gamma:U\to I$.  
In the special case $d\in\cP_1$ this can be
rewritten as
\begin{equation}
\label{eq:barX-2}
\bar X(\gamma)=\cP(r\circ\gamma,d)\times_{\Aut_I(U)}\Hom_I(U,X).
\end{equation}

\begin{prp}
$F_r(X)$ is a free $\cP$-algebra generated by the set $X$.
\end{prp}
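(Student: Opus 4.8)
The plan is to verify directly that $F_r(X)=\colim(\bar c)$ satisfies the universal property of the free algebra, i.e. to produce a natural equivalence
\[
\Map_{\Alg_\cP(\cS)}(F_r(X),A)\simeq\Map_{\cS^I}(X,G_r(A))
\]
for every $A\in\Alg_\cP(\cS)$. Recall that $\bar c:\Triv_X^\op\to\Left(\cP)=\Fun(\cP,\cS)$ is the composition of $c^\op:\Triv_X^\op\to\cP^\op$ with the co-Yoneda embedding $x\mapsto\cP_{x/}=\Map_\cP(x,-)$. Since $\Alg_\cP(\cS)$ is a full subcategory of $\Left(\cP)$ (the monoid objects, in the sense of \cite{L.HA}, 2.4.2.1), I may compute the left-hand mapping space inside $\Left(\cP)$. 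By the co-Yoneda lemma $\Map_{\Left(\cP)}(\cP_{c\alpha/},A)\simeq A(c\alpha)$, so carrying the colimit out of the first variable turns it into a limit,
\[
\Map_{\Left(\cP)}(F_r(X),A)\simeq\lim_{\alpha\in\Triv_X}A(c\alpha).
\]

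First I would evaluate this limit. Because $A$ is a monoid object and $c:\Triv_X\to\cP$ is an operad map, the composite $A\circ c:\Triv_X\to\cS$ is a $\Triv_X$-monoid: for $\alpha:U\to X$ one has $A(c\alpha)\simeq\prod_{u\in U}A(rp(u))$, with the transition map attached to an embedding $U'\hookrightarrow U$ over $X$ being the corresponding projection. A compatible family over such a diagram of products and projections is determined by, and freely prescribed on, the singleton objects, so the limit is the product over singletons,
\[
\lim_{\alpha\in\Triv_X}A(c\alpha)\simeq\prod_{x\in X}A(rp(x))\simeq\prod_{i\in I}A(ri)^{X_i}\simeq\Map_{\cS^I}(X,G_r(A)),
\]
using that each $X_i$ is a discrete set. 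Rigorously one phrases this as cofinality of the full subgroupoid of singletons, equivalently as the statement that $A\circ c$ is right Kan extended from the colors. This already identifies the corepresented functor with $A\mapsto\Map_{\cS^I}(X,G_r A)$.

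It remains to check that $F_r(X)$, a priori only an object of $\Left(\cP)$, is in fact a monoid object, hence lies in $\Alg_\cP(\cS)$; this is the step I expect to be the main obstacle. Concretely one must verify the Segal condition $F_r(X)(d)\simeq\prod_{i}F_r(X)(\rho^i d)$ for $d\in\cP_n$. Pointwise in $\alpha$ the decomposition $\Map_\cP(c\alpha,d)\simeq\prod_i\Map_\cP(c\alpha,\rho^i d)$ is exactly the content of axiom (Fib3), but the defining colimit over $\Triv_X^\op$ must then be commuted past this finite product, and colimits do not commute with products in general. The remedy is the reformulation already prepared in the text: by \eqref{eq:Frc-3} and \eqref{eq:barX} the colimit can be rewritten over the \emph{groupoids} $(\Triv_X^\eq)^\op$ and $(\Triv_I^\eq)^\op$ using only \emph{active} arrows, so that it becomes a coproduct of homotopy orbits; combined with the active-arrow decomposition of $\Map^\act_\cP(-,d)$ along the partition of the inputs determined by $d$, the interchange with the product becomes valid and the Segal condition follows. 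Alternatively, one can avoid checking this by hand: $\colim(\bar c)$ is precisely the operadic left Kan extension along $c:\Triv_X\to\cP$ of the terminal $\Triv_X$-algebra, which by \cite{L.HA}, 3.1.3.5 and the accompanying colimit formula is automatically a $\cP$-algebra and is left adjoint to $c^*$; this reduces the proposition to matching the two colimit presentations and to the limit computation above.
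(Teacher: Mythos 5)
Your proposal is correct and follows essentially the same route as the paper: the paper's one-line proof is precisely your steps 1--2 in the language of left fibrations, computing for a left fibration of operads $q:\cQ\to\cP$ that $\Map_{\Cat_\cP}(F_r(X),\cQ)=\lim_{\alpha\in\Triv^\op_X}\cQ_{c\circ\alpha}=\Map_{\Cat_\cP}(X,\cQ)$, where the fiber $\cQ_{c\circ\alpha}$ is your co-Yoneda evaluation $A(c\alpha)$ and the last equality is your reduction of the limit to the singletons. Your third step --- verifying that $F_r(X)$ is itself a monoid object via the groupoid/active-arrow reformulation, or via Lurie's operadic left Kan extension --- is left implicit in the paper's proof, and you are right to flag it: without it, corepresenting the functor $A\mapsto\Map_{\cS^I}(X,G_r(A))$ on the full subcategory $\Alg_\cP(\cS)\subset\Left(\cP)$ would not by itself identify the colimit with the free algebra.
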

\begin{proof}
Let $q:\cQ\to\cP$ be a left fibration of operads. One has
$$
\Map_{\Cat_\cP}(F_r(X),\cQ)=\lim_{\alpha\in\Triv^\op_X}
\Map_{\Cat_\cP}(\cP_{c\circ\alpha/},\cQ)=
\lim_{\alpha\in\Triv^\op_X}\cQ_{c\circ\alpha}=\Map_{\Cat_\cP}(X,\cQ).
$$
\end{proof}

\subsubsection{Proof of \ref{thm:reconstruction}}
\label{sss:proof-reco}

Choose a  recoloring $r:I\to\cP_1$. It will automatically give a recoloring $f\circ r:I\to\cQ$.
This yields a commutative diagram
\begin{equation}
\label{eq:AlgPQ}
\xymatrix{
&\Alg_\cP\ar_{G_r}[rd]& &\Alg_\cQ\ar^{f^*}[ll]\ar^{G_{f\circ r}}[dl]\\
&&\cS^I&
}
\end{equation}

We denote by $f_!:\Alg_\cP\to\Alg_\cQ$ the functor left adjoint (and inverse)
to $f^*$, so we get an equivalence
$$F_{f\circ r}=f_!\circ F_r.$$
This yields an equivalence
\begin{equation}
\label{eq:freeeq}
G_r\circ F_r\to G_{f\circ r}\circ F_{f\circ r}.
\end{equation}
The source and the target of the above map are explicitly given as colimits, see formulas (\ref{eq:Frc-3}) and (\ref{eq:barX-2}). Thus, to yield an equivalence (\ref{eq:freeeq}), one should have, for any   $\gamma:U\to I$
in $\Triv^\eq_I=\Fin^\eq_{/I}$ and a map of sets $X\to I$, an equivalence
$$
\cP(r\circ\gamma,d)\times_{\Aut_I(U)}\Hom_I(U,X)\to
\cQ(r\circ\gamma,d)\times_{\Aut_I(U)}\Hom_I(U,X).
$$

Choosing $X\to I$ large enough for $\Aut_I(U)$ to have an orbit in $\Hom_I(U,X)$ with trivial stabilizer (for example, choosing $X\to I$ to be $U\to I$ itself),
 we deduce that the map
$\cP(r\circ\gamma,d)\to\cQ(r\circ\gamma,d)$ has to be an equivalence for 
all $\gamma:U\to I$ and $d\in\cP_1$. 
This implies that $f:\cP\to\cQ$ is an equivalence.
\qed

\subsection{Model structures on operad algebras}
\label{ss:models}
In this subsection we present standard results on model structures in categories of algebras and rectification results.

\subsubsection{}
\label{sss:PSrectification}
We will use a special case of the rectification theorem of 
Pavlov-Scholbach.
Let $\cC$ be a simplicial symmetric monoidal model category.
According to \cite{NS}, A.7, the underlying $\infty$-category 
$\cC_\infty$ inherits a symmetric monoidal structure so that the localization functor $\cC\to\cC_\infty$ is lax symmetric monoidal.
 
Let  $\cO$ be a  $\Sigma$-free and 
$\cC$-admissible operad in sets.~\footnote{Pavlov and Scholbach more generally consider simplicial operads.}
Then the category of algebras $\Alg_\cO(\cC)$ has a projective model 
structure.
One has a functor
$L':\Alg_\cO(\cC)^{cf}\to\Alg_{\ell(\cO)}(\cC_\infty)$
carrying weak equivalences of fibrant cofibrant algebras  to equivalences, hence
inducing a
functor
\begin{equation}
\label{eq:Phi}
L:\Alg_\cO(\cC)_\infty\to\Alg_{\ell(\cO)}(\cC_\infty)
\end{equation}
between the underlying $\infty$-categories.  

\begin{thm}[see~\cite{PS}, Theorem 7.11]
\label{thm:rect}
Let $\cC$ be a simplicial symmetric monoidal model category and let $\cO$ be a 
$\Sigma$-free $\cC$-admissible   operad as above. Then the functor 
$L$ in (\ref{eq:Phi}) is an equivalence.
\end{thm}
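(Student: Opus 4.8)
The plan is to prove that $L$ is an equivalence by a monadic comparison, isolating the single place where $\Sigma$-freeness enters: the coincidence of strict and homotopy orbits under symmetric groups. Write $[\cO]$ for the set of colors of $\cO$. Both the source and target of $L$ lie over $\cC_\infty^{[\cO]}$: on the strict side the right Quillen forgetful functor $\Alg_\cO(\cC)\to\cC^{[\cO]}$ descends to a right adjoint $\Alg_\cO(\cC)_\infty\to\cC_\infty^{[\cO]}$, and on the $\infty$-operadic side there is the forgetful functor $\Alg_{\ell(\cO)}(\cC_\infty)\to\cC_\infty^{[\cO]}$. Since $L$ is defined without altering underlying objects, it commutes with these two forgetful functors. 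Granting that both forgetful functors are monadic, $L$ will then be an equivalence precisely when the induced morphism between the two monads on $\cC_\infty^{[\cO]}$ is an equivalence, that is, when the two monads agree as endofunctors.

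First I would establish monadicity on each side using the $\infty$-categorical Barr--Beck theorem (\cite{L.HA}, 4.7.3.5). Each forgetful functor admits a left adjoint: on the strict side it is the left derived functor of the set-theoretic free $\cO$-algebra monad, available because $\cO$ is $\cC$-admissible; on the $\infty$-operadic side it is the operadic free functor supplied by \cite{L.HA}, 3.1.3.5. Conservativity holds on the strict side because weak equivalences in the projective model structure are detected on underlying objects, and on the $\infty$-operadic side by the standard properties of algebra forgetful functors (\cite{L.HA}, \S3.2). Preservation of geometric realizations of $U$-split simplicial objects is again checked on underlying objects, the free functors commuting with sifted colimits through the symmetric-power formula below. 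This reduces the theorem to an identification of the two induced monads as endofunctors of $\cC_\infty^{[\cO]}$.

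The heart of the proof is then the comparison of the two free-algebra functors. On a cofibrant $X$ the derived strict free functor is computed by $\coprod_n(\cO(n)\otimes X^{\otimes n})_{\Sigma_n}$, the coproduct of strict $\Sigma_n$-orbits of the copower of $X^{\otimes n}$ by the set $\cO(n)$ of $n$-ary operations (with the evident coloured bookkeeping), whereas the $\infty$-operadic free functor over $\ell(\cO)$ is the same expression with homotopy orbits, $\coprod_n(\cO(n)\otimes X^{\otimes n})_{h\Sigma_n}$; here the discrete set $\cO(n)$ is exactly the space of $n$-ary operations of $\ell(\cO)$. For cofibrant $X$ the tensor power $X^{\otimes n}$ is already homotopically meaningful by the pushout-product axiom, so the sole remaining discrepancy is between strict and homotopy $\Sigma_n$-orbits. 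As $\cO$ is $\Sigma$-free, $\Sigma_n$ acts freely on $\cO(n)$, hence diagonally freely on $\cO(n)\otimes X^{\otimes n}$, and for a free action the canonical comparison $(\cO(n)\otimes X^{\otimes n})_{h\Sigma_n}\stackrel{\sim}{\to}(\cO(n)\otimes X^{\otimes n})_{\Sigma_n}$ is an equivalence. Summing over $n$ identifies the two monads, completing the argument.

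I expect the main obstacle to be making this last identification rigorous at the model-categorical level rather than the informal equivalence above. One must guarantee that for cofibrant $X$ the object $X^{\otimes n}$ carries a $\Sigma_n$-action cofibrant enough that the strict quotient of the free $\Sigma_n$-object $\cO(n)\otimes X^{\otimes n}$ genuinely models its homotopy quotient, and that these equivalences are natural enough to assemble into a morphism of monads rather than a mere objectwise comparison. These are exactly the symmetric-flatness estimates carried out in \cite{PS}; the role of the $\Sigma$-freeness hypothesis is to collapse them to the clean statement that the orbits of a free group action compute homotopy orbits.
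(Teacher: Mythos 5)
The paper itself offers no proof of this theorem: it is imported verbatim from Pavlov--Scholbach (\cite{PS}, Theorem 7.11), with the \qed recording that fact. So the only meaningful comparison is with the proof in the cited source, and there your sketch is essentially faithful to it: Pavlov and Scholbach also argue via the Barr--Beck--Lurie theorem (\cite{L.HA}, 4.7.3.5), showing both forgetful functors to $\cC_\infty^{[\cO]}$ are monadic and reducing the equivalence to an identification of the induced monads, i.e., of the derived strict free-algebra functor with Lurie's operadic free functor; their symmetric-flatness hypothesis is exactly what makes the strict coproduct-of-orbits formula compute the homotopy-orbit formula, and for a $\Sigma$-free operad in \emph{sets} this collapses, as you say, to the observation that $\cO(n)\otimes X^{\otimes n}$ is a $\Sigma_n$-object induced from the trivial group (the action on the discrete indexing set $\cO(n)$ being free), so that strict and homotopy quotients agree on cofibrant input. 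This is precisely why the present paper restricts to $\Sigma$-free operads in sets. The two points you flag at the end are indeed where the technical work lies in \cite{PS} --- promoting the objectwise identification to a morphism of monads (equivalently, checking that $L$ intertwines the free functors, not just the forgetful ones), and the colored bookkeeping replacing the single-color formula $\coprod_n(\cO(n)\otimes X^{\otimes n})_{\Sigma_n}$ --- so your proposal is a correct outline of the cited proof rather than a gap-free argument, which is all that can be asked of a statement the paper itself delegates to the literature.
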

\qed

The following Lemma~\ref{lem:correctfp} is used as an inductive step in the proofs of Proposition~\ref{prp:op-eq} (see~\ref{ss:proof-op-eq}) and of Lemma~\ref{thm:BVL}.

\begin{lem}
\label{lem:correctfp} Let $A:[1]\to\Fin_*$ be presented by an
arrow $f:I_*\to J_*$ and let $\cO=o(\omega(A))$.
Let $\cC$ be a simplicial model category. We endow $\Alg_\cO(\cC)$ with the projective model structure. Then the forgetful functor
$\Alg_\cO(\cC)\to\cC^I$
defined by the source of $f$
induces a fibration of the simplicial categories of fibrant cofibrant
objects 
$$
p:\Alg_\cO(\cC)^{cf}_*\to(\cC^{cf}_*)^I.
$$
\end{lem}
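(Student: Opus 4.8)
The plan is to make the operad $\cO=o(\omega(A))$ completely explicit and then verify the two defining conditions of a fibration of Kan-enriched simplicial categories directly. First I would unpack $\omega(A)$ for $A:[1]\to\Fin_*$ given by $f:I_*\to J_*$: since $A$ has only the two levels $A_0=I$ and $A_1=J$, the forest $\omega(A)$ is a disjoint union of corollas, one corolla $C_j$ with output edge $j$ and $\inn(v_j)=f^{-1}(j)$ for each $j\in J$, together with one stray edge $\eta_i$ for each $i\in I$ with $f(i)=*$. Consequently $\cO$ is a coproduct $\coprod_{j\in J}\cO_j\amalg\coprod_{i\in f^{-1}(*)\cap I}\eta_i$, where $\cO_j=o(C_j)$ is the single-corolla operad on the colors $f^{-1}(j)\sqcup\{j\}$. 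A coproduct of operads gives a product of algebra categories, compatibly with the projective model structures (which are detected colorwise), so
\[
\Alg_\cO(\cC)=\prod_{j\in J}\Alg_{\cO_j}(\cC)\times\cC^{f^{-1}(*)\cap I},
\]
and $p$ is the product of the corresponding forgetful functors. The factors indexed by $f^{-1}(*)\cap I$ are identities, and fibrations are closed under products, so it suffices to treat a single corolla $\cO_j$.

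For a fixed $j$ write $S=f^{-1}(j)$. An $\cO_j$-algebra is precisely a tuple $(X_s)_{s\in S}$, an object $X_j$, and a structure map $\mu\colon\bigotimes_{s\in S}X_s\to X_j$; that is, $\Alg_{\cO_j}(\cC)$ is the comma category $\otimes\downarrow\cC$ of the functor $\otimes\colon\cC^S\to\cC$. Because the input colors $S\subseteq I$ are pairwise distinct, $\cO_j$ is $\Sigma$-free and $\mu$ is rigid, so the transferred projective model structure coincides with the usual comma/gluing model structure: weak equivalences and fibrations are detected colorwise, and a cofibrant object is one for which the $X_s$ are cofibrant and $\mu$ is a cofibration. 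One checks this on free algebras $F(Z)$, where $\mu$ is the summand inclusion $\bigotimes Z_s\hookrightarrow Z_j\amalg\bigotimes Z_s$, and then passes to cells and retracts. I regard this description of cofibrant algebras as the key technical input.

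Granting it, the local-fibration condition is immediate. For fibrant--cofibrant $X,Y$ the algebra mapping space sits in a pullback
\[
\Map_{\Alg_{\cO_j}}(X,Y)=\Big(\prod_{s\in S}\Map_\cC(X_s,Y_s)\Big)\times_{\Map_\cC(\bigotimes X_s,\,Y_j)}\Map_\cC(X_j,Y_j),
\]
so the projection $\Map_{\Alg_{\cO_j}}(X,Y)\to\prod_s\Map_\cC(X_s,Y_s)$ is a base change of $(-\circ\mu)\colon\Map_\cC(X_j,Y_j)\to\Map_\cC(\bigotimes X_s,Y_j)$; the latter is a Kan fibration by the simplicial model category axiom (SM7), since $\mu$ is a cofibration and $Y_j$ is fibrant. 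The remaining condition is that $p$ lift equivalences, i.e. is an isofibration on homotopy categories. Given a fibrant--cofibrant $X$ and a weak equivalence $e\colon(X_s)\to(X'_s)$ in $(\cC^{cf})^S$, form the pushout $P=X_j\amalg_{\bigotimes X_s}\bigotimes X'_s$ along the cofibration $\mu$. Since $\otimes$ is left Quillen, $\bigotimes e$ is a weak equivalence between cofibrant objects, whence $X_j\to P$ is a weak equivalence (left properness on cofibrant objects) and $\bigotimes X'_s\to P$ is a cofibration. A fibrant replacement $P\to X'_j$ then produces a fibrant--cofibrant algebra $X'=((X'_s),X'_j,\mu')$ together with a weak equivalence $\tilde e\colon X\to X'$ satisfying $p(\tilde e)=e$.

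Assembling the product over $j\in J$ with the identity factors completes the proof. The one point that genuinely requires care --- and which I expect to be the main obstacle to write out cleanly --- is the identification of cofibrant $\cO_j$-algebras, that is, the claim that the structure map $\mu$ of a cofibrant algebra is a cofibration with cofibrant source; everything afterward (the pullback for mapping spaces, the pushout and fibrant replacement for lifting equivalences) is a formal consequence of the simplicial model category axiom. Here the distinctness of the input colors, hence the $\Sigma$-freeness of $\cO$, is exactly what lets me bypass the more delicate analysis of cofibrant algebras over general $\Sigma$-cofibrant operads.
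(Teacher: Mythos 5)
Your reduction to a single corolla, the pullback description of the mapping spaces, and the SM7 argument for the local fibration condition all coincide with the paper's proof, and you are right that the key input is the description of cofibrant algebras (the paper states only the ``if'' direction --- $D_i$ cofibrant and $\alpha$ a cofibration imply cofibrancy --- while your application to arbitrary fibrant--cofibrant objects, like the paper's, tacitly also needs the ``only if'' direction, which your free-algebras/cells/retracts sketch is the right way to supply). However, your argument for the isofibration condition has a genuine gap. The lemma assumes only that $\cC$ is a simplicial model category; there is no monoidal structure $\otimes$, and algebras over the $\Set$-operad $\cO=o(\omega(A))$ are formed using the \emph{cartesian} product (the paper's proof writes the structure map as $\alpha:D_1\times\cdots\times D_n\to D_0$). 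The product functor $\cC^S\to\cC$ is \emph{right} Quillen (right adjoint to the diagonal), not left Quillen, so your claim that ``$\bigotimes e$ is a weak equivalence between cofibrant objects'' fails: $\prod_s X_s$ need not be cofibrant even when all $X_s$ are, and then ``left properness on cofibrant objects'' (the gluing lemma) does not apply to the pushout $P=X_j\amalg_{\prod X_s}\prod X'_s$. One cannot fall back on left properness of $\cC$ either, since it is not assumed --- and in the paper's main application of this lemma one takes $\cC=\Alg_\cQ(\sSet)$, which is neither cartesian monoidal model nor left proper in general. (The map $\prod e$ \emph{is} still a weak equivalence, because the $X_s,X'_s$ are fibrant and $\prod$ is right Quillen, but that does not rescue the pushout step.)

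The condition can be obtained without any properness, and this is presumably what the paper's ``this is straightforward'' refers to: since all objects involved are fibrant--cofibrant, the equivalence $e$ admits a homotopy inverse $e'$; factor the composite $\mu\circ\prod e':\prod_s X'_s\to X_j$ as a cofibration followed by a trivial fibration, $\prod_s X'_s\rightarrowtail X'_j\stackrel{\sim}{\twoheadrightarrow} X_j$. Then $X'=\bigl((X'_s),X'_j,\mu'\bigr)$ is fibrant--cofibrant and the pair $(\,(e'_s),\,X'_j\to X_j\,)$ is a colorwise weak equivalence of algebras $X'\to X$ lying over $(e'_s)$; its inverse in $\Ho\bigl(\Alg_{\cO_j}(\cC)^{cf}_*\bigr)$ is an isomorphism $X\to X'$ whose image under $p$ is $[(e'_s)]^{-1}=[(e_s)]$, as required. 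With this replacement your proof goes through; as it stands, the pushout-plus-properness step is the one point that would fail in the generality in which the lemma is stated and used.
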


\begin{proof}
The forest $\omega(A)$ consists of corollas numbered by $j\in J$
and trivial operads $\eta$ numbered by $f^{-1}(*)\setminus\{*\}\subset I$. The claim 
immediately reduces to the case when $\omega(A)$ is a single corolla
$C_n$. Thus, from now on we assume $\cO=o(C_n)$.

A $C_n$-algebra in $\cC$ is given by an arrow 
$\alpha:D_1\times\ldots \times D_n\to D_0$ in $\cC$. It is a fibrant 
cofibrant object if $D_i$ are fibrant cofibrant and $\alpha$ is a 
cofibration. Given two such objects,
$\alpha$ as above and $\beta:E_1\times\ldots\times E_n\to E_0$, the simplicial 
set $\shom(\alpha,\beta)$ is defined as the fiber product
$$
\prod_{i=1}^n\shom_\cC(D_i,E_i)\times_{\shom(\prod_{i=1}^n D_i,E_0)}\shom(D_0,E_0).
$$
The map $\shom(\alpha,\beta)\to\prod_{i=1}^n\shom_\cC(D_i,E_i)$ is a fibration because it is obtained by base change from
the map 
$
\shom_\cC(D_0,E_0)\to\shom(\prod_{i=1}^n(D_i,E_0),
$
which is itself a fibration because it is defined  by the composition with the cofibration $\alpha$. 
It remains to verify that the induced map of the homotopy categories
$$
\Ho(\Alg_{C_n}(\cC)^{cf}_*)\to\Ho(\cC_*^{cf})^n
$$
is an isofibration of conventional categories. This is straightforward.
\end{proof}

\subsection{Proof of~\ref{prp:op-eq}}
\label{ss:proof-op-eq}
\subsubsection{}
The map $s_A:A\to j(\omega(A))$ introduced at the beginning of Section~\ref{ss:lambdadelta},
induces a map $s'_A:A'\to i(\omega(A))$  in $\LOp$ 
where $A^\flat\to A'$ is an operadic equivalence in 
$\Cat^+_{/\Fin^\natural_*}$; see 
\ref{sss:lop-bousfield} for the notion of operadic equivalence.

By the reconstruction theorem~\ref{thm:reconstruction} it is sufficient to
verify that $s'_A$ induces an equivalence of the categories of algebras
with values in $\cS$,
$$
s^{\prime*}_A:\Alg_{i(\omega(A))}(\cS)\to\Alg_{A'}(\cS)
=\Fun_{\Cat_{/\Fin_*}}(A,\cS),
$$
where $A$ and $\cS$ on the right-hand side of the formula
are considered as  objects of $\Cat_{/\Fin_*}$.
(the equality in the last formula follows from the operadic 
equivalence $A^\flat\to A'$ and Lemma~\ref{lem:opeq}).

We will prove that $s^{\prime*}_A$ is an equivalence by induction,
based on Lemma~\ref{lem:correctfp}.

\subsubsection{Pruning a simplex}
\label{sss:pruning}
The following procedure
of pruning a simplex $A$ will be used.
Define $B:[n-1]\to\Fin_*$ and $C:[1]\to\Fin_*$ by the formulas
$B=A\circ d_n$, $C=A\circ d_0^{n-1}$. Let $v:[0]\to\Fin_*$ be
defined by $v=C\circ d_1$. The map $v$ is given by an object $V_*\in\Fin_*$. The decomposition $A=B\sqcup^vC$ in $\Cat_{/\Fin_*}$
gives rise to a commutative diagram
\begin{equation}
\label{eq: pruning-diag}
\xymatrix{
&{\Alg_{i(\omega(A))}(\cS)}\ar[r]\ar[d]
&{\Alg_{i(\omega(A))}(\cS)\times_{\cS^V}\Alg_{i(\omega(A))}(\cS)}\ar[d]\\
&{\Fun_{\Fin_*}(A,\cS)}\ar[r]&{\Fun_{\Fin_*}(B,\cS)\times_{\cS^V}
\Fun_{\Fin_*}(C,\cS)}
}
\end{equation}
so that the lower horizontal arrow is, obviously, an equivalence.
In \ref{lem:pruning-dec} we will verify that the upper horizontal arrow is also an equivalence.
This will reduce the claim of~\ref{prp:op-eq} that $s^{\prime*}_A$ 
to the case $n=1$ which is very easy.

\begin{lem}
\label{lem:pruning-dec}
The map
$
\Alg_{i(\omega(A))}(\cS)\to
\Alg_{i(\omega(A))}(\cS)\times_{\cS^V}\Alg_{i(\omega(A))}(\cS)
$
defined by the decomposition $A=B\sqcup^vC$ is an equivalence.
\end{lem}
\begin{proof}

Clearly, $o(\omega(A))=o(\omega(B))\sqcup^{\Triv_V} o(\omega(C))$ where 
$\Triv_V$ is the trivial operad on $V$ colors.  
The operad $o(\omega(A))$ is free as an operad in sets. The category
$\Alg_{i(\omega(A))}(\cS)$ is the $\infty$-category underlying
the simplicial model category $\Alg_{o(\omega(A))}(\sSet)$, where
the model structure is the projective model structure induced from 
the standard model structure on the simplicial sets.

The category $\Alg_{o(\omega(A))}(\sSet)$ is equivalent to
the fiber product
\begin{equation}
\label{eq:fiberproduct-mod}
\Alg_{o(\omega(B))}(\sSet)\times_{(\sSet)^V}
\Alg_{o(\omega(C))}(\sSet).
\end{equation}
Moreover, an arrow $f:X\to Y$ in $\Alg_{o(\omega(A))}(\sSet)$
is a fibration, cofibration or weak equivalence iff its components
satisfy the same property in the corresponding model categories
$\Alg_{o(\omega(B))}(\sSet)$ and $\Alg_{o(\omega(C))}(\sSet)$.

Note that one needs to be careful as the fiber product in  (\ref{eq:fiberproduct-mod}) is taken in the category of conventional categories, and not in $\Cat$. However, the same fiber product formula still holds for the underlying $\infty$-categories. Indeed, 
the $\infty$-category underlying $\Alg_{o(\omega(A))}(\sSet)$ is the
homotopy coherent nerve of the simplicial category of fibrant cofibrant 
objects which is the fiber product of the simplicial categories of 
fibrant cofibrant objects of $\Alg_{o(\omega(B))}(\sSet)$
and $\Alg_{o(\omega(C))}(\sSet)$.
By Lemma~\ref{lem:correctfp} this
fiber product calculates the fiber product of the corresponding 
$\infty$-categories in $\Cat$.

This completes the proof of Proposition~\ref{prp:op-eq}.
\end{proof}

The following result deals with a slightly different type of 
pruning; it was used in~\ref{ss:inDOp} and its proof
is very similar to that of \ref{lem:pruning-dec}.

\begin{prp}
\label{prp:cuttingtree}
Let $b$ be an inner edge of a tree $T$. Then $i(T)\in\LOp$ is a pushout
$$
i(T)=i(T^b)\sqcup^{i(b)}i(T_b).
$$
\end{prp}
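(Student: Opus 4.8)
The plan is to exhibit $i(T)$ as the pushout by comparing categories of algebras and then invoking the reconstruction theorem~\ref{thm:reconstruction}. First I would assemble the comparison map. The lower part $T^b$ and the upper part $T_b$ are subtrees of $T$ obtained by chopping off external vertices, so the subtree inclusions give morphisms $T^b\to T$ and $T_b\to T$ in $\Phi$, and the edge $b\cong\eta$ includes into both (as a leaf of $T^b$ and as the root of $T_b$) compatibly over $T$. Applying $i$ produces a cocone, hence a canonical map out of the pushout $P:=i(T^b)\sqcup^{i(b)}i(T_b)$ formed in $\LOp$:
$$
\phi:i(T^b)\sqcup^{i(b)}i(T_b)\to i(T).
$$
Since the colors of $P$ are the edges of $T^b$ and of $T_b$ glued along $b$, i.e.\ exactly the edges of $T$, the map $\phi$ is bijective, in particular essentially surjective, on colors.

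By the reconstruction theorem~\ref{thm:reconstruction} it then suffices to prove that $\phi^*:\Alg_{i(T)}(\cS)\to\Alg_{P}(\cS)$ is an equivalence. For the target this is formal: the functor $\cP\mapsto\Alg_\cP(\cS)$ carries colimits of operads to limits of $\infty$-categories, so $\Alg_{P}(\cS)$ is the fiber product $\Alg_{i(T^b)}(\cS)\times_{\Alg_{i(b)}(\cS)}\Alg_{i(T_b)}(\cS)$, and under this identification $\phi^*$ becomes the canonical restriction functor from $\Alg_{i(T)}(\cS)$ into that fiber product. Thus everything reduces to showing that this restriction functor is an equivalence.

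For the source I would argue exactly as in the pruning step~\ref{sss:pruning}. The operads $o(T)$, $o(T^b)$, $o(T_b)$ and $o(\eta)$ are free, hence $\Sigma$-free and $\sSet$-admissible, and at the level of conventional operads one has the strict pushout $o(T)=o(T^b)\sqcup^{o(\eta)}o(T_b)$, both sides being free on all the vertices of $T$ with colors the edges of $T$. Consequently the model category $\Alg_{o(T)}(\sSet)$ is the strict fiber product $\Alg_{o(T^b)}(\sSet)\times_{\sSet}\Alg_{o(T_b)}(\sSet)$, with fibrations, cofibrations and weak equivalences detected componentwise. Applying the rectification theorem~\ref{thm:rect} with $\cC=\sSet$ identifies the underlying $\infty$-category of each of these algebra model categories with the corresponding $\Alg_{i(-)}(\cS)$.

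The one delicate point, and the \emph{main obstacle}, is to pass from this strict fiber product of model categories to the fiber product of the underlying $\infty$-categories, which is a homotopy fiber product. For this I would use that the forgetful functor remembering the color $b$ is a fibration on fibrant--cofibrant objects; since $b$ is a leaf of $T^b$, this is precisely the tree version of Lemma~\ref{lem:correctfp}, which reduces to the corolla case already treated there. Given this fibrancy, the simplicial category of fibrant--cofibrant objects of $\Alg_{o(T)}(\sSet)$ is the honest fiber product of those of $\Alg_{o(T^b)}(\sSet)$ and $\Alg_{o(T_b)}(\sSet)$ over that of $\sSet$, and this computes the fiber product in $\Cat$. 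Combining with the previous paragraph identifies $\Alg_{i(T)}(\cS)$ with $\Alg_{i(T^b)}(\cS)\times_{\Alg_{i(b)}(\cS)}\Alg_{i(T_b)}(\cS)$ compatibly with $\phi^*$, so $\phi^*$ is an equivalence and $\phi$ is an equivalence by~\ref{thm:reconstruction}, giving $i(T)=i(T^b)\sqcup^{i(b)}i(T_b)$.
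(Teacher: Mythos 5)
Your proof is correct and follows essentially the same route as the paper's: reduce via the reconstruction theorem~\ref{thm:reconstruction} to an equivalence of algebra categories, rectify by Pavlov--Scholbach to the strict fiber product $\Alg_{o(T^b)}(\sSet)\times_{\sSet}\Alg_{o(T_b)}(\sSet)$, and pass to the homotopy fiber product of underlying $\infty$-categories via the fibrancy argument of Lemma~\ref{lem:correctfp} as in the pruning step~\ref{sss:pruning}. You merely make explicit two points the paper leaves implicit, namely the construction of the comparison map $\phi$ with its bijectivity on colors and the formal fact that $\cP\mapsto\Alg_\cP(\cS)$ carries pushouts of operads to pullbacks.
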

\begin{proof}
 By Theorem~\ref{thm:reconstruction}, the claim reduces to proving that
the natural map 
$$
\Alg_{i(T)}(\cS)\to\Alg_{i(T^b)}(\cS)\times_{\Alg_{i(b)}(\cS)}\Alg_{i(T_b)}(\cS)
$$
is an equivalence of $\infty$-categories. By~\cite{PS}, 
see~\ref{thm:rect}, the 
$\infty$-categories of algebras involved underly the simplicial model 
categories of algebras with values in $\sSet$. The category 
$\Alg_{o(T)}(\sSet)$ is equivalent to the fiber product
$$
\Alg_{o(T^b)}(\sSet)\times_{\sSet}\Alg_{o(T_b)}(\sSet)
$$
and the reasoning of \ref{lem:pruning-dec} based on Lemma~\ref{lem:correctfp} proves that
this equivalence induces an equivalence of the underlying $\infty$-categories.
\end{proof}

\section{Monoidal structures}
\label{sec:monoidal}
\subsection{}
In Section~\ref{ss:dop} we introduced the category $\DOp$ underlying a Quillen model 
category of simplicial presheaves. It is known~\cite{HM} that the 
associated homotopy category carries a structure of symmetric monoidal category. Our goal in this section is to explain that this structure can
be lifted to the structure of a symmetric monoidal $\infty$-category
on $\DOp$, and to prove the following sharpening of
Theorem~\ref{thm:equivalence}.

\begin{thm}
\label{thm:SMequivalence}
The functor $\lambda:\DOp\to\LOp$ is an equivalence of symmetric monoidal categories.
\end{thm}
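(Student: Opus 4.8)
The plan is to upgrade the equivalence of $\infty$-categories $\lambda:\DOp\to\LOp$ from Theorem~\ref{thm:equivalence} to an equivalence of symmetric monoidal $\infty$-categories. The fundamental strategy I would follow is to avoid transporting the monoidal structure along the equivalence by hand, and instead to exhibit both structures as manifestations of a single universal construction, so that $\lambda$ is tautologically monoidal. Concretely, the symmetric monoidal structure on $\LOp$ is the Boardman--Vogt-type tensor product described in \ref{sss:smlop}, defined via operad multifunctors over the smash product on $\Fin_*$; the structure on $\DOp$ is built from shuffles of trees as indicated at the end of the introduction and in the Appendix. Both tensor products can be understood as representing multilinear maps, i.e.\ an object $P_1\otimes\cdots\otimes P_n$ corepresents the functor sending $Q$ to the space of multifunctors $(P_1,\dots,P_n)\to Q$. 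My first step is therefore to give an intrinsic, multifunctor-style description of $\otimes$ on $\DOp$ and to check that under $\lambda$ the $\LOp$-multifunctors out of $(\lambda(D_1),\dots,\lambda(D_n))$ correspond naturally to $\DOp$-multifunctors out of $(D_1,\dots,D_n)$.

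The key technical device, flagged in the introduction, is the colax symmetric monoidal structure on categories of (co)presheaves on an (anti-)operad. The plan is to use this to produce a colax (indeed lax) symmetric monoidal structure on $\lambda$ directly from its defining formula $\lambda(D)(A)=\Map_\DOp(\omega(A),D)$, using that $\omega:\bF\to\Phi$ is compatible with the relevant monoidal combinatorics of forests and level forests. Once $\lambda$ is equipped with a lax monoidal structure, i.e.\ with coherent comparison maps $\lambda(D_1)\otimes\cdots\otimes\lambda(D_n)\to\lambda(D_1\otimes\cdots\otimes D_n)$, and I already know from Theorem~\ref{thm:equivalence} that the underlying functor is an equivalence, it suffices to verify that these comparison maps are themselves equivalences. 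A standard reduction applies here: because $\lambda$ is an equivalence and both targets are symmetric monoidal, a lax monoidal functor whose underlying functor is an equivalence is automatically strong monoidal once the structure maps are equivalences, and these can be checked on a generating family.

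The natural generators to test on are the free objects, namely the forests $F\in\Phi$ embedded via $i$ on the Lurie side and via the Yoneda-type representables on the dendroidal side; here I would invoke Lemma~\ref{lem:retract} (every forest is a retract of some $\omega(A)$) and Proposition~\ref{prp:psiisi} (the identification $\lambda(F)\simeq i(F)$ for $F\in\Phi$) to reduce the verification of the comparison maps to a statement about tensor products of free operads. The crux is then a combinatorial identity: the Boardman--Vogt tensor product $i(F_1)\otimes\cdots\otimes i(F_n)$ in $\LOp$ must be identified, compatibly with $\lambda$, with the tensor product $F_1\otimes\cdots\otimes F_n$ of forests in $\DOp$, and the latter is exactly what the shuffle description in the Appendix computes. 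Since tensor products commute with colimits in each variable and free operads generate under colimits, establishing the equivalence on free objects propagates to all objects.

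The hard part, I expect, will be matching the two combinatorial models of the tensor product on generators: showing that the shuffles of trees computing $F_1\otimes F_2$ in $\DOp$ correspond precisely, under $i$ and $\ell$, to the Boardman--Vogt tensor product of the free operads $o(F_1)$ and $o(F_2)$ in $\Op(\Set)$, and that this correspondence is natural and coherent for the full symmetric monoidal (not merely monoidal) structure, including units and the symmetry isomorphisms. This is where the Appendix facts about shuffles are needed, and where one must be careful that $\ell:\Op(\Set)\to\LOp$ transports the classical Boardman--Vogt tensor product to Lurie's smash-product-based tensor product; I would verify this either by comparing corepresented multifunctor spaces or by a direct shuffle count, checking that both sides have the same colors and operations with matching composition. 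Coherence, rather than the pointwise identification, is the genuine obstacle, and I would handle it by packaging everything through the corepresentability of multifunctors so that the comparison maps are induced by a map of corepresenting diagrams and coherence is inherited for free.
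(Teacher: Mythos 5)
Your overall architecture genuinely matches the paper's: both exploit the colax symmetric monoidal structure on presheaves over an anti-operad (\ref{sss:presheaves}) applied to $\boldsymbol\Phi$ with its Boardman--Vogt operations, reduce everything to forests using that the tensor products preserve colimits in each variable, use the shuffle results of the Appendix on the dendroidal side, and correctly locate the crux in identifying the Lurie tensor product $i(F_1)\otimes\cdots\otimes i(F_n)$ with $\ell\bigl(o(F_1)\otimes_{BV}\cdots\otimes_{BV}o(F_n)\bigr)$ (Proposition~\ref{prp:i-coop}). The genuine gap is in how you propose to prove this crux. A ``direct shuffle count'' cannot work: the tensor product in $\LOp$ is defined only by a universal property (\ref{sss:smlop}), mapping spaces out of it are not computable by matching colors and operations, and the Boardman--Vogt tensor product is in general homotopically ill-behaved, which is exactly why extra hypotheses and machinery are needed. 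Your alternative, ``comparing corepresented multifunctor spaces'' against all targets, is the original problem restated rather than a method, since computing the space of multifunctors out of $(\ell(P),\ell(Q))$ into an arbitrary $\infty$-operad is the hard part.

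The mechanism the paper actually uses, and which is absent from your plan, is two-step. First, by the reconstruction theorem (Theorem~\ref{thm:reconstruction}) it suffices to test the canonical map $\ell(\cP)\otimes\ell(\cQ)\to\ell(\cP\otimes_{BV}\cQ)$ on categories of algebras in $\cS$ alone, rather than on all corepresented multifunctor spaces. Second, by the Pavlov--Scholbach rectification theorem (Theorem~\ref{thm:rect}) these algebra categories are computed by projective model structures on simplicial algebras, using crucially that $o(F)$ is $\Sigma$-free. Even then there is a wrinkle your plan would hit: one must compare $\Alg_{\ell(\cP)}(\Alg_{\ell(\cQ)}(\cS))$ with the localization of $\Alg_\cP(\Alg_\cQ(\sSet))$, and $\Alg_\cQ(\sSet)$ is \emph{not} a monoidal model category, so rectification does not apply a second time; the paper instead runs an induction on the corollas of $F$ via pruning, with Lemma~\ref{lem:correctfp} guaranteeing that the relevant strict fiber products of simplicial categories of fibrant-cofibrant objects compute fiber products in $\Cat$ (this is the content of Lemma~\ref{thm:BVL}). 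A secondary remark: your plan to equip $\lambda$ with a lax structure ``directly from its defining formula'' and then check the structure maps on generators is not available as stated in the $\infty$-setting, where coherent structure cannot be written down pointwise; the paper circumvents this by equipping the colimit-preserving composite $i_!=\lambda\circ L$ with a colax structure purely by universal properties (symmetric monoidal envelope plus Day convolution, Lemma~\ref{lem:Day-univ}) and factoring it through the localization $P(\Phi)\to\DOp$, the shuffle results (Propositions~\ref{prp:app-segal} and~\ref{prp:app-asso}) ensuring both that the localization is compatible with the tensor products and that the resulting colax structure on $\DOp$ is in fact strong (Proposition~\ref{prp:DOp-sm}).
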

It follows from this theorem that its inverse $\delta:\LOp\to\DOp$
is symmetric monoidal as well. Even though we already know that 
$\lambda$ and $\delta$ form an equivalence of categories, the proof of this stronger theorem is quite involved, due to the fact that 
(especially in the $\infty$-context!) it is difficult to deal with the rich structure of a symmetric monoidal category in a direct way.

\subsection{Preliminaries}

\subsubsection{} We consider symmetric monoidal categories and operads
in the sense of Lurie~\cite{L.HA}, so ``operad'' means object of 
$\LOp$ here. 

It is convenient to define symmetric monoidal categories as commutative algebras in $\Cat$, that is the functors $\Fin_*\to\Cat$ satisfying
the Segal condition. The (covariant) Grothendieck construction then 
realizes the category $\Cat^\SM$ of symmetric monoidal categories
as the subcategory of $\LOp$ whose objects are cocartesian fibrations
of operads $p:M\to\Fin_*$, with the morphisms preserving cocartesian 
arrows. It is convenient to have another realization, the one connected
to the contravariant Grothendieck construction. The following 
terminology is taken from ~\cite{BGS}. 
\begin{dfn}
A functor $q:\cC\to\Fin_*^\op$ is called  {\sl an anti-operad} if $q^\op:\cC^\op\to\Fin_*$ is fibrous (that is, a Lurie operad).
\end{dfn}

The category of anti-operads will be denoted by $\Coop$. The contravariant realization of $\Cat^\SM$ identifies $\Cat^\SM$  with the subcategory of $\Coop$
 whose objects are cartesian fibrations and whose arrows preserve the cartesian arrows. The categories of operads and of anti-operads are
obviously equivalent. However, if $M$ is a conventional symmetric monoidal category, its operadic realization $M^\otimes$ assigns to $X_1,\ldots,X_n$ and $Y$ in $M$ the set $\Hom_M(\otimes X_i,Y)$ of operations, whereas 
its anti-operadic realization $^\otimes\!M$ assigns the set $\Hom_M(Y,\otimes X_i)$
of ``anti-operations''.
The passage to the opposite symmetric monoidal category intertwines between the two realizations: $(^\otimes\! M)^\op=M^{\op\otimes}$.
For an operad $L$ we denote by $\hat L$ the symmetric monoidal
envelope of $L$. Passing to opposite categories, we define the 
enveloping symmetric monoidal category $\hat C$ of an anti-operad $C$. One has
canonical embeddings $L\to\hat L^\otimes$ and $C\to^\otimes\hat C$
so that if $C=L^\op$, $\hat C=\hat L^\op$.

We will now define two notions intermediate between the world of (anti) operads and the world of symmetric monoidal categories.

\begin{dfn}
\label{dfn:laxSM}
An operad $p:\cO\to\Fin_*$ is called {\sl a lax symmetric monoidal category} if $p$ is a locally cocartesian fibration, see~\cite{L.T},
2.4.2.6.
\end{dfn}

\begin{dfn}
\label{dfn:colaxSM}
\begin{itemize}
\item[1.]
An anti-operad $q:\cC\to\Fin_*^\op$ is called {\sl a colax symmetric monoidal category} if $q$ is a locally cartesian fibration.
\item[2.] $q:\cC\to\Fin_*^\op$ is called {\sl a colax symmetric monoidal category with colimits} if, in addition to the above, the fiber $\cC_1$
has colimits and the maps 
$\otimes_n:\cC_1^n\to\cC_1$ defined by the local cartesian liftings preserve colimits in each argument.
\end{itemize}
\end{dfn}

In the conventional setting, a colax symmetric monoidal category $\cC$ is given by a 
collection of operations $\otimes_n:\cC^n\to\cC$, with a compatible 
collection of natural transformations (not necessarily equivalences) 
of the form
$$
\otimes_m\circ(\otimes_{n_1}\times\ldots\times\otimes_{n_m})\to
\otimes_n
$$
with $n=\sum n_i$.
Note that, since the collections of active and inert arrows in $\Fin_*$ form a factorization system,  it is sufficient to require that the
active arrows have a locally (co)cartesian lifting.

\subsubsection{Day convolution}

The result  \cite{L.HA}, 4.8.1.10  yields the following.
\begin{Lem}
\label{lem:Day-univ}
Let $\cC$ be a symmetric monoidal category. Then the category of presheaves $P(\cC)$ inherits a symmetric monoidal structure, so that
the Yoneda embedding $Y:\cC\to P(\cC)$ is a symmetric monoidal functor, universal
among symmetric monoidal functors from $\cC$ to a symmetric monoidal category with colimits. 
\end{Lem}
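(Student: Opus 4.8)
The plan is to identify the asserted structure with the Day convolution product and to deduce both its construction and its universal property from the single principle that the free cocompletion functor is symmetric monoidal. Concretely, for $F,G\in P(\cC)=\Fun(\cC^\op,\cS)$ the Day convolution is the left Kan extension of the external product along the tensor of $\cC$,
$$
(F\star G)(c)=\colim_{c\to a\otimes b}F(a)\times G(b),
$$
and the content of the lemma is that this assembles into a coherent symmetric monoidal structure for which $Y$ is symmetric monoidal and initial.

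First I would record the two inputs that drive everything. (i) By \cite{L.T}, 5.1.5.6 the Yoneda embedding exhibits $P(\cC)$ as the free cocompletion of $\cC$: for any category $\ccD$ with colimits, restriction along $Y$ is an equivalence $\Fun^L(P(\cC),\ccD)\xrightarrow{\sim}\Fun(\cC,\ccD)$ from colimit-preserving functors to all functors. Globally this says that $P(-)\colon\Cat\to\mathrm{Pr}^L$ is left adjoint to the forgetful functor. (ii) The functor $P(-)$ is symmetric monoidal from $(\Cat,\times)$ to $(\mathrm{Pr}^L,\otimes)$: indeed $P(*)\simeq\cS$ is the unit of $\mathrm{Pr}^L$, and the external product induces equivalences $P(\cC)\otimes P(\ccD)\simeq P(\cC\times\ccD)$ compatible with the associativity and symmetry constraints, by the analysis of the tensor product on presentable categories in \cite{L.HA}, 4.8.1.

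Granting (ii), the construction and the monoidality of $Y$ are formal. A symmetric monoidal left adjoint induces an adjunction on commutative algebra objects, so $P(-)$ yields $\mathrm{CAlg}(\Cat)\rightleftarrows\mathrm{CAlg}(\mathrm{Pr}^L)$; under the identifications $\mathrm{CAlg}(\Cat)=\Cat^\SM$ and $\mathrm{CAlg}(\mathrm{Pr}^L)=$ presentable symmetric monoidal categories, the left adjoint carries a symmetric monoidal $\cC$ to $P(\cC)$ equipped with precisely the Day convolution structure of \cite{L.HA}, 4.8.1.10, and the unit of the adjunction is the Yoneda embedding, now promoted to a symmetric monoidal functor. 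The universal property is then read off the same adjunction: for a target $\ccD$ that is symmetric monoidal with colimits and whose tensor product preserves colimits in each variable, precomposition with the monoidal $Y$ gives an equivalence $\Fun^{\otimes,L}(P(\cC),\ccD)\xrightarrow{\sim}\Fun^\otimes(\cC,\ccD)$, whose underlying equivalence of functor categories is exactly (i).

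The main obstacle is step (ii): upgrading the objectwise identity $P(\cC\times\ccD)\simeq P(\cC)\otimes P(\ccD)$ to a genuine symmetric monoidal structure on $P(-)$, equivalently producing the Day convolution as an honest symmetric monoidal $\infty$-category carrying all of its higher associativity and symmetry coherences, rather than merely a product on objects. This is the part that cannot be verified by hand in the $\infty$-setting and must be imported from Lurie's operadic construction of Day convolution; once it is in place, the remaining assertions are formal consequences of the adjunction on algebra objects. Accordingly, in the write-up I would simply cite \cite{L.HA}, 4.8.1.10 for the existence of the structure and its universal property, having recorded (i) and (ii) only to explain why the cited result takes the form stated here.
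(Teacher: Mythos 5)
Your proposal is correct and matches the paper, which gives no independent proof of this lemma and simply cites \cite{L.HA}, 4.8.1.10 — exactly the citation you land on. Your preliminary steps (i) and (ii), reducing the statement to the symmetric monoidality of the free cocompletion $P(-):\Cat\to\mathrm{Pr}^L$ and the induced adjunction on commutative algebra objects, are a faithful sketch of how Lurie's result is proved, so they are sound explanatory scaffolding rather than a divergence from the paper's route.
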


\subsubsection{Presheaves on an anti-operad}
\label{sss:presheaves}
We will now define, for any anti-operad $C$, a full embedding
of anti-operads $C\to P$ where $P$ is a colax symmetric monoidal category
with colimits whose underlying category is $P(C_1)$.
 
Let $C$ be an anti-operad and let $\hat C$ be the symmetric monoidal envelope of $C$. We write $C_1$ for the category underlying $C$. Then the full embedding $u:C_1\to(^\otimes\hat C)_1=\hat C$ induces an adjoint pair
$$
u_!:P(C_1)\rlarrows P(\hat C):u^*
$$
where $u_!$ is again a full embedding. In fact, 
let $f=\colim(Y\circ a)$, $f'=\colim(Y\circ a')$
for $a:K\to C_1$ and $a':K'\to C_1$. Then 
$$\Map_{P(C_1)}(f,f')=\lim_{k\in K}\colim_{k'\in K'}\Map(a(k),a'(k')).$$
The same formula describes $\Map_{P(\hat C)}(u_!(f),u_!(f'))$,
so $u_!$ is a full embedding.
This implies that $P(C_1)$, as a full subcategory of a symmetric 
monoidal category, inherits the
structure of an anti-operad from $^\otimes P(\hat C)$. We will denote it 
by $^\otimes P(C_1)$ and we claim that it is a colax symmetric monoidal 
category.
This means that for any $f_1,\ldots,f_n$ in $P(C_1)$ 
the functor  
$$
f\in P(C_1)\mapsto \Map(u_!(f),\otimes_i u_!(f_i))
$$
is representable. This is obviously so as 
$$
\Map(u_!(f),\otimes_i u_!(f_i))=
\Map(f,u^*(\otimes_i u_!(f_i))).
$$
Therefore, the multiple tensor product functor on $P(C_1)$
is defined as the composition
\begin{equation}
\label{eq:laxproduct}
P(C_1)^{\otimes n}\stackrel{u_!^{\otimes n}}{\longrightarrow}
P(\hat C)^{\otimes n}\stackrel{\otimes_n}{\to}
P(\hat C)\stackrel{u^*}{\to}P(C_1).
\end{equation}
Note that by construction the map of anti-operads 
$C\to^\otimes\!\hat C$ factors through the full embedding 
$^\otimes P(C_1)\to^\otimes\! P(\hat C)$ and therefore yields a map 
$C\to^\otimes\! P(C_1)$.
 
\subsection{$\DOp$ as a symmetric monoidal category}
First of all, recall that the conventional category $\Op(\Set)$ of operads in sets is symmetric monoidal. Its tensor product is the Boardman-Vogt tensor product of operads, denoted
$$
P\otimes_{BV}Q
$$
for two operads $P$ and $Q$.  

Next,   the full embedding
$$
\Phi\to\Op(\Set),\ F\mapsto o(F)
$$
gives rise to a full sub-anti-operad $\boldsymbol{\Phi}$ of 
$^\otimes\Op(\Set)$ with $\boldsymbol\Phi_1=\Phi$.
Explicitly,
$$
\boldsymbol\Phi(F; F_1\ldots,F_n)=\Hom_{\Op(\Set)}(o(F),o(F_1)\otimes_{BV}\ldots\otimes_{BV} o(F_n)).
$$
It follows that $P(\Phi)$ has the structure of a colax symmetric monoidal category as explained in~\ref{sss:presheaves}.

An arrow $f$ in $P(\Phi)$ will be called {\sl an operadic equivalence}
if it is carried to equivalence by the localization functor
$P(\Phi)\to\DOp$.

\begin{prp}
\label{prp:DOp-sm}
\begin{itemize} 
\item[1.] Multiple tensor products $\otimes_n:P(\Phi)^n\to P(\Phi)$
preserve operadic equivalences in each argument. 
\item[2.] The localization functor $P(\Phi)\to\DOp$ canonically extends to
a map of colax symmetric monoidal categories.
\item[3.] The localization functor $P(\Phi)\to\DOp$ carries associativity constraints to equivalences. Therefore, the colax symmetric monoidal structure on $\DOp$ is in fact symmetric monoidal.
\end{itemize}
\end{prp}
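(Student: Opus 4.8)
The assertion is that the colax symmetric monoidal structure carried by $P(\Phi)$ via \ref{sss:presheaves} descends along the Bousfield localization $L\colon P(\Phi)\to\DOp$ and becomes genuinely symmetric monoidal there. Essentially all the weight lies in part~1; once it is available, parts~2 and~3 are structural. I would begin by recording that the tensor products $\otimes_n\colon P(\Phi)^n\to P(\Phi)$ preserve colimits in each variable. Indeed, by \ref{sss:presheaves} the functor $\otimes_n$ is the composite $u^*\circ\otimes^{\mathrm{Day}}_n\circ u_!^{\otimes n}$, where $\otimes^{\mathrm{Day}}_n$ is the $n$-fold Day convolution on $P(\hat{\boldsymbol\Phi})$; here $u_!$ and $\otimes^{\mathrm{Day}}_n$ preserve colimits, and $u^*$, being restriction along a functor of small categories, preserves all colimits (it even admits a further right adjoint $u_*$).

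For part~1 I would fix the remaining arguments and set $T:=\otimes_n(-,f_2,\dots,f_n)$. Since $T$ preserves colimits and the operadic equivalences form a strongly saturated class of small generation, $T^{-1}(\text{operadic equivalences})$ is again strongly saturated, so it suffices to test $T$ on a generating set; by colimit preservation in the remaining slots and saturation we may likewise take $f_2,\dots,f_n$ representable. The computation at the heart of the matter is that, evaluated on $G\in\Phi$,
\[
\otimes_n\bigl(\boldsymbol\Phi(-,F_1),\dots,\boldsymbol\Phi(-,F_n)\bigr)(G)=\Hom_{\Op(\Set)}\bigl(o(G),\,o(F_1)\otimes_{BV}\cdots\otimes_{BV}o(F_n)\bigr),
\]
i.e.\ the tensor product of representables is the dendroidal nerve (on forests) of the Boardman--Vogt tensor product. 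The generating operadic equivalences are the Segal-core inclusions (for (D1) and (D3)) and the completeness map (for (D2)), and I must show each stays an operadic equivalence after tensoring with representables. This is exactly where the shuffle calculus of the Appendix enters: $o(S)\otimes_{BV}o(T)$ is the union over the poset of shuffles $R$ of the suboperads $o(R)$, and this decomposition is compatible with cutting at inner edges, so the Segal cores of the shuffles assemble to exhibit the tensored map as a Segal-type, hence operadic, equivalence; the completeness generator is handled similarly through the linear part via $\iota^*$. Equivalently, using $u^*\dashv u_*$ with $u^*u_*\simeq\id$, part~1 is the closedness statement that $u^*\shom_{P(\hat{\boldsymbol\Phi})}(u_!f,u_*D)\in\DOp$ for $D\in\DOp$, which the same shuffle decomposition supplies.

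Part~2 is then formal: the compatibility established in part~1 is precisely the hypothesis under which a colax symmetric monoidal structure descends along a Bousfield localization, so both the colax structure on $\DOp$ and the colax symmetric monoidal refinement of $L$ are induced by the universal property of the localization. For part~3 I would show that the colax associativity constraints $\otimes_m\circ(\otimes_{n_1}\times\cdots\times\otimes_{n_m})\to\otimes_n$ of \ref{dfn:colaxSM} become equivalences in $\DOp$. The only source of colax-ness is that $P(\Phi)$ is not closed under the genuinely associative Day convolution: such a constraint is $u^*$ applied, inside a Day tensor, to the counit $u_!u^*Y\to Y$ with $Y=\otimes^{\mathrm{Day}}_2(u_!f,u_!g)$. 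Reducing again to representables via part~1, the inner tensor is the shuffle union $Y=\bigcup_R\boldsymbol\Phi(-,R)$, whence $\otimes_2(Y,\boldsymbol\Phi(-,C))=\colim_R\otimes_2(\boldsymbol\Phi(-,R),\boldsymbol\Phi(-,C))$; strict associativity of $\otimes_{BV}$ on $\Op(\Set)$ together with the shuffle cover identifies this colimit, after $L$, with $\otimes_3(\boldsymbol\Phi(-,A),\boldsymbol\Phi(-,B),\boldsymbol\Phi(-,C))$, so the constraint is an operadic equivalence. Hence the descended colax structure on $\DOp$ is in fact symmetric monoidal.

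The genuinely non-formal step, and the main obstacle, is the shuffle input feeding parts~1 and~3: that the dendroidal nerve of a Boardman--Vogt tensor product is the colimit over the shuffle poset of the nerves of the shuffle trees, compatibly with cutting at inner edges, so that Segal-core inclusions tensor to operadic equivalences. This is the content summarized in the Appendix. A secondary, purely $\infty$-categorical, caution is that every comparison above must be produced as a functor or natural transformation --- through the universal properties of Day convolution (\ref{lem:Day-univ}), of the localization, and of the construction in \ref{sss:presheaves} --- rather than object-by-object, in order to avoid the coherence issues flagged in Section~\ref{sec:preliminary}.
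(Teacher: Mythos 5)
Your proposal follows essentially the same route as the paper's proof: reduce by colimit-preservation and saturation to representables (ultimately trees), identify the tensor product of representables with the nerve of the Boardman--Vogt tensor product, invoke the shuffle results of the Appendix (Proposition~\ref{prp:app-segal} for the Segal generators, Proposition~\ref{prp:app-asso} for the associativity constraints), and descend the colax structure along the localization --- which the paper carries out concretely via the locally cartesian fibration presentation $P(\Phi)^\otimes\to\Fin_*^{\op}$, matching your ``formal descent'' step. The only place you are vaguer than the paper is the completeness generator $[0]\to J$, which the paper dispatches in one line by observing that its tensor with trees is a deformation retract of dendroidal sets, hence an operadic equivalence.
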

\begin{proof}
This result easily follows from the properties of shuffles of trees presented in the Appendix.
Recall that $\DOp$ is a Bousfield localization of $P(\Phi)$ with respect to three types of arrows.
\begin{itemize}
\item[1.] $T_d\sqcup^dT^d\to T$, where $d$ is an inner edge of a tree $T$.
\item[2.] $*\to J$, embedding of simplicial sets considered as objects of $P(\Phi)$.
\item[3.] $\sqcup\ T_i\to F$ where $F\in\Phi$ and $T_i$ are the tree components of $F$.
\end{itemize}
To prove the first claim, we have to show that for all 
$f_i\in P(\Phi)$ the functor 
\begin{equation}
\label{eq:tensoreval}
\otimes_n(f_1,\dots,f_{k-1}, -,f_{k+1},\dots,f_n):P(\Phi)\to P(\Phi)
\end{equation}
carries the arrows of types 1--3 to operadic equivalences. Since $\otimes_n$ preserves colimits and the localization
functor $P(\Phi)\to\DOp$ preserves colimits, it is enough to verify this claim in the case when $f_i$ are 
representable, that is, forests. To calculate the tensor product, 
we can replace each forest with the coproduct of its tree components;
in this way the claim reduces to the case when all $f_i$ are trees. 
For the arrows of type 1 the result now follows from Proposition~\ref{prp:app-segal}. The arrow $[0]\to J$ is carried
by (\ref{eq:tensoreval}) to a deformation retract of dendroidal sets, so to an operadic equivalence.
The arrows of type 3 are obviously carried to equivalences.

{\sl Claim 2.} To see that Claim 1 defines a colax symmetric monoidal
structure on $\DOp$, we look at the anti-operadic presentation
$q:^\otimes\!\!P(\Phi)\to\Fin_*^\op$ of $P(\Phi)$. 
Claim 1 implies that the localization of the total category $^\otimes\!P(\Phi)$ with respect to
operadic equivalences yields a locally cartesian fibration 
$^\otimes\DOp\to\Fin_*^\op$; moreover, the localization map preserves locally cartesian arrows.

{\sl Claim 3.} We now look at the morphisms of functors 
\begin{equation}
\otimes_q\circ(\id^k\times\otimes_p\times\id^{q-k-1})\to\otimes_{p+q+1}:
P(\Phi)^{p+q+1}\to P(\Phi)
\end{equation}
describing the associativity constrants. Both source and target preserve colimits on each argument, so the claim is reduced  to the case when $f=\{f_i\}\in P(\Phi)^{p+q+1}$ is a collection of trees. Then Proposition~\ref{prp:app-asso} implies the result.
\end{proof}

\subsection{Proof of Theorem~\ref{thm:SMequivalence}}
Recall \cite{L.HA} that $\LOp$ has the structure of a symmetric monoidal category. For two objects $L$ and $M$ their tensor product is characterized by the property that there is an equivalence
$$
\Alg_{L\otimes M}(\cS)=\Alg_L(\Alg_M(\cS)).
$$
(This characterizes $L\otimes M$ uniquely by the reconstruction 
theorem~\ref{thm:SMequivalence}.) Since $\LOp$ is symmetric monoidal, it has the structure of an anti-operad.
\begin{prp}
\label{prp:i-coop}
The inclusion $i:\Phi\to\LOp$ canonically extends to a map of 
anti-operads.
\end{prp}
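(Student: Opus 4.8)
The plan is to reduce the statement to the symmetric monoidality of $\ell:\Op(\Set)\to\LOp$ and then to restrict. Recall that $i=\ell\circ o$, that $\boldsymbol{\Phi}$ is by construction a full sub-anti-operad of ${}^\otimes\Op(\Set)$, the anti-operadic realization of the Boardman--Vogt monoidal structure, and that ${}^\otimes\LOp$ is the anti-operadic realization of the monoidal structure on $\LOp$. Hence it suffices to promote $\ell$ to a symmetric monoidal functor $(\Op(\Set),\otimes_{BV})\to(\LOp,\otimes)$ on the operads of the form $o(F)$, and to take the induced map of anti-operadic realizations restricted to $\boldsymbol{\Phi}$. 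Concretely, the value of the resulting map of anti-operads on a multimorphism $\phi\in\boldsymbol{\Phi}(F;F_1,\dots,F_n)=\Hom_{\Op(\Set)}(o(F),o(F_1)\otimes_{BV}\cdots\otimes_{BV}o(F_n))$ will be the composite of $\ell(\phi):i(F)\to\ell(o(F_1)\otimes_{BV}\cdots\otimes_{BV}o(F_n))$ with the inverse of the comparison equivalence $i(F_1)\otimes\cdots\otimes i(F_n)\simeq\ell(o(F_1)\otimes_{BV}\cdots\otimes_{BV}o(F_n))$ established below.

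The heart of the matter is the natural equivalence $\ell(P\otimes_{BV}Q)\simeq\ell(P)\otimes\ell(Q)$ for operads $P,Q$ in sets. The universal bilinear maps of the two tensor products, together with the fact that $\ell$ preserves products, furnish a canonical comparison $c:\ell(P)\otimes\ell(Q)\to\ell(P\otimes_{BV}Q)$, and I would prove that $c$ is an equivalence through algebras. Indeed, Lurie's tensor product is characterized by $\Alg_{\ell(P)\otimes\ell(Q)}(\cS)=\Alg_{\ell(P)}(\Alg_{\ell(Q)}(\cS))$. Applying the Pavlov--Scholbach rectification (Theorem~\ref{thm:rect}) first to $\ell(Q)$ with $\cC=\sSet$, and then to $\ell(P)$ with $\cC=\Alg_Q(\sSet)$ in its cartesian simplicial symmetric monoidal model structure, identifies this iterated algebra category with the underlying $\infty$-category of $\Alg_P(\Alg_Q(\sSet))$. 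By the universal property of the Boardman--Vogt product the latter is $\Alg_{P\otimes_{BV}Q}(\sSet)$, whose underlying $\infty$-category is $\Alg_{\ell(P\otimes_{BV}Q)}(\cS)$ by a third application of Theorem~\ref{thm:rect}. All these identifications are compatible with the forgetful functors to $\cS^{[P]\times[Q]}$, and $c$ is a bijection on colors; hence the reconstruction theorem (Theorem~\ref{thm:reconstruction}) shows that $c$ is an equivalence of operads.

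It remains to assemble the pointwise comparisons into a single map of anti-operads, and this coherence is the main obstacle, since an $\infty$-functor is not determined by its effect on objects and multimorphisms alone. I would resolve it by noting that the maps $c$ are the structure maps of a lax symmetric monoidal structure on $\ell$: being built from the universal bilinear maps, they are automatically coherent (associative, unital, and symmetric). Once $\ell$ is lax symmetric monoidal and each $c$ is an equivalence by the previous paragraph, $\ell$ is strong symmetric monoidal, and the restriction of its anti-operadic realization to $\boldsymbol{\Phi}$ is the desired map $\boldsymbol{\Phi}\to{}^\otimes\LOp$ extending $i$. Two technical points deserve attention: one must know that the operads $o(F_1)\otimes_{BV}\cdots\otimes_{BV}o(F_n)$ are $\Sigma$-free and $\sSet$-admissible so that Theorem~\ref{thm:rect} applies, which I would deduce from the shuffle decomposition of such tensor products discussed in the Appendix; and the cartesian model structure on $\Alg_Q(\sSet)$ must be checked to be simplicial and symmetric monoidal, which is standard for algebras over a $\Sigma$-free operad in a cartesian setting.
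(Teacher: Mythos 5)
Your overall strategy---reduce the statement to the comparison map $\theta:\ell(\cP)\otimes\ell(\cQ)\to\ell(\cP\otimes_{BV}\cQ)$ and prove $\theta$ is an equivalence by passing to algebras in $\cS$ and invoking the reconstruction theorem \ref{thm:reconstruction}---is indeed the paper's strategy, but your proof of the key comparison has a genuine gap at exactly the point you dismiss as ``standard''. Your second application of Theorem \ref{thm:rect} takes $\cC=\Alg_\cQ(\sSet)$ with its cartesian product, and this $\cC$ is \emph{not} a symmetric monoidal model category: already for $\cQ=\Ass$, the pushout-product of the cofibration $\{e\}\to F(x)$ (free simplicial monoid on one generator) with itself is the canonical map $F(x)\sqcup F(x)\to F(x)\times F(x)$ from a free product to a cartesian product, which is not even injective, hence not a cofibration. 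The paper flags this explicitly in the proof of Lemma \ref{thm:BVL} (``we cannot use the result of Pavlov--Scholbach as \dots not a monoidal model category''), and this failure is precisely why that lemma is stated only for $\cP=o(F)$ with $F$ a forest, rather than for arbitrary operads in sets as you attempt. The missing rectification $L(\Alg_\cP(\cC))\simeq\Alg_{\ell(\cP)}(L\cC)$ is instead proved by hand: induction on the number of corollas of $F$, pruning off a corolla $F=F_1\cup^r F_2$, writing $\Alg_\cP(\cC)$ as a fiber product of algebra categories over $\cC^k$, and using Lemma \ref{lem:correctfp} to see that one leg induces a fibration of the simplicial categories of fibrant--cofibrant objects, so that the strict fiber product computes the fiber product of underlying $\infty$-categories in $\Cat$. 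Without some substitute of this kind, the middle paragraph of your argument does not go through.

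A second, lesser gap is your coherence step: the claim that the comparison maps, ``being built from the universal bilinear maps, are automatically coherent'' and so assemble into a lax symmetric monoidal structure on $\ell$ is not an argument in the $\infty$-categorical setting, where a lax structure is an infinite hierarchy of coherence data that cannot be conjured from naturality of binary comparisons (the paper's own conventions warn against defining functors by their effect on objects and arrows). The paper sidesteps coherence entirely: it considers the \emph{full} sub-anti-operad $\cC\subseteq{}^\otimes\LOp$ spanned by the objects $i(F)$---fullness means no structure has to be constructed---and then identifies $\cC$ with $\boldsymbol{\Phi}$ by a mapping-space computation: once $\theta:iF_1\otimes\cdots\otimes iF_n\to\ell(o(F_1)\otimes_{BV}\cdots\otimes_{BV}o(F_n))$ is known to be an equivalence, the multimapping space $\Map_{\LOp}(iF,iF_1\otimes\cdots\otimes iF_n)$ is the discrete set $\Hom_{\Op(\Set)}(o(F),o(F_1)\otimes_{BV}\cdots\otimes_{BV}o(F_n))$, so $\cC$ is a conventional anti-operad canonically isomorphic to $\boldsymbol{\Phi}$. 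Replacing your ``automatic coherence'' paragraph by this fullness argument (and your double Pavlov--Scholbach step by the corolla induction) yields the paper's proof; your remaining technical remark, that the $\Sigma$-freeness needed for the induction follows from the shuffle description of Boardman--Vogt products, is correct and consistent with the paper's use of the Appendix.
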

\begin{proof}
Recall that $i$ is the composition $\Phi\stackrel{o}{\to}\Op(\Set)
\stackrel{\ell}{\to}\LOp$. Let $\cC$ be the full subcategory of $\LOp$
spanned by the objects $i(F),\ F\in\Phi$. Since $\LOp$ is a symmetric monoidal category, $\cC$ \VH{acquires} \IM{receives} the structure of a anti-suboperad.
We will verify that $\cC$
is a conventional anti-operad canonically isomorphic to 
$\boldsymbol\Phi$.

Given a sequence $O_1,\dots,O_n$ of operads in sets, one has a canonical
operad multifunctor
$$
\ell(O_1)\times\dots\times \ell(O_n)\to \ell(O_1\otimes_{BV}\dots\otimes_{BV}O_n),
$$
expressing the universal property of Boardmann-Vogt tensor product. In particular, a sequence $F_1,\ldots,F_n$ of objects of $\Phi$ yields an operad multifunctor
$$
iF_1\times\ldots\times iF_n\to\ell(o(F_1)\otimes_{BV}\ldots\otimes_{BV}o(F_n))
$$ 
that induces a map of operads (see \ref{sss:smlop})
$$
\theta:iF_1\otimes\ldots\otimes iF_n\to\ell(o(F_1)\otimes_{BV}\ldots\otimes_{BV}o(F_n)).
$$ 
Thus, it suffices to verify that this map is an equivalence in $\LOp$. Indeed, we would then have an equivalence of anti-operads $\Phi$ and 
$\cC$ since for any object $F\in\Phi$ the induced map from
$$
\Hom_{\Op(\Set)}(o(F),o(F_1)\otimes_{BV}\ldots\otimes_{BV}o(F_n))=
\Map_{\LOp}(iF,\ell(o(F_1)\otimes_{BV}\ldots\otimes_{BV}o(F_n)))
$$
to $\Map_{\LOp}(iF,iF_1\otimes\ldots\otimes iF_n)$, will be then an equivalence. The fact that $\theta$ is an equivalence now follows by induction from the following lemma.
\end{proof}

\begin{lem}
\label{thm:BVL}
Let $\cP=o(F)$ where $F$ is a forest
and let $\cQ$ be a $\Sigma$-free operad in $\Set$. Then 
the canonical operad bifunctor $\ell(\cP)\times\ell(\cQ)\to\ell(\cP\otimes_{BV}\cQ)$ exhibits
$\ell(\cP\otimes_{BV}\cQ)$ as a tensor product (in the sense of Lurie)
of $\ell(\cP)$ and $\ell(\cQ)$.
\end{lem}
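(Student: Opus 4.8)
The plan is to use the characterization of Lurie's tensor product together with the reconstruction theorem \ref{thm:reconstruction} to reduce the statement to a comparison of categories of algebras, and then to evaluate both sides by Pavlov--Scholbach rectification \ref{thm:rect} and the classical interchange law for the Boardman--Vogt tensor product. First, the canonical operad bifunctor $\ell(\cP)\times\ell(\cQ)\to\ell(\cP\otimes_{BV}\cQ)$ induces, by the universal property of the tensor product in $\LOp$ (see \ref{sss:smlop}), a map of operads
$$
\Theta:\ell(\cP)\otimes\ell(\cQ)\to\ell(\cP\otimes_{BV}\cQ).
$$
On colors $\Theta$ is the canonical identification of the product of the colors of $\cP$ with the colors of $\cQ$ (the underlying category of $\ell(\cP)\otimes\ell(\cQ)$ being the product $\ell(\cP)_1\times\ell(\cQ)_1$) with the set of colors of $\cP\otimes_{BV}\cQ$; in particular $\Theta$ is essentially surjective on colors. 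By the reconstruction theorem it therefore suffices to prove that $\Theta$ induces an equivalence on categories of $\cS$-valued algebras, i.e. that
$$
\Theta^*:\Alg_{\ell(\cP\otimes_{BV}\cQ)}(\cS)\to\Alg_{\ell(\cP)\otimes\ell(\cQ)}(\cS)
$$
is an equivalence.

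The target is identified, by the characterization of Lurie's tensor product recalled at the beginning of the proof of Theorem~\ref{thm:SMequivalence}, with $\Alg_{\ell(\cP)}(\Alg_{\ell(\cQ)}(\cS))$. For the source I would use that $\cP=o(F)$ is $\Sigma$-free and that the Boardman--Vogt tensor product of the $\Sigma$-free operads $\cP$ and $\cQ$ is again $\Sigma$-free; rectification \ref{thm:rect} then applies to $\cP\otimes_{BV}\cQ$ over $\sSet$ (with its cartesian monoidal structure) and gives $\Alg_{\ell(\cP\otimes_{BV}\cQ)}(\cS)=\Alg_{\cP\otimes_{BV}\cQ}(\sSet)_\infty$. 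The classical universal (interchange) property of the Boardman--Vogt tensor product furnishes an equivalence of simplicial symmetric monoidal model categories $\Alg_{\cP\otimes_{BV}\cQ}(\sSet)=\Alg_\cP(\Alg_\cQ(\sSet))$, where $\Alg_\cQ(\sSet)$ carries its cartesian symmetric monoidal structure. Applying rectification twice more --- to $\cQ$ over $\sSet$, and to the $\Sigma$-free operad $\cP=o(F)$ over the simplicial symmetric monoidal model category $\Alg_\cQ(\sSet)$ --- and using that the underlying $\infty$-category of $\Alg_\cQ(\sSet)$ with the cartesian structure is $\Alg_{\ell(\cQ)}(\cS)$ with the cartesian structure, I obtain
$$
\Alg_{\cP\otimes_{BV}\cQ}(\sSet)_\infty=\Alg_\cP(\Alg_\cQ(\sSet))_\infty=\Alg_{\ell(\cP)}(\Alg_{\ell(\cQ)}(\cS)).
$$
Chaining these identifications and checking that the resulting equivalence is the one induced by $\Theta^*$ then completes the argument.

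The main obstacle I anticipate is ensuring that the hypotheses of rectification hold at the nested stage and that the monoidal structures match up. Concretely, one must verify that $\cP\otimes_{BV}\cQ$ inherits $\Sigma$-freeness from its factors, that $\Alg_\cQ(\sSet)$ is a simplicial symmetric monoidal model category in which $\cP=o(F)$ is admissible, and --- most delicately --- that the cartesian symmetric monoidal structure used to form $\Alg_\cP(\Alg_\cQ(\sSet))$ corresponds, upon passage to underlying $\infty$-categories, precisely to the cartesian structure on $\Alg_{\ell(\cQ)}(\cS)$ that appears in Lurie's characterization of $\ell(\cP)\otimes\ell(\cQ)$. This last compatibility is exactly what forces the two descriptions of $\Theta^*$ to agree as the map induced by $\Theta$, rather than merely to be abstractly equivalent categories.
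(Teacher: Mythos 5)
Your reduction is the same as the paper's: reconstruction (Theorem~\ref{thm:reconstruction}) reduces the claim to showing that $\theta^*:\Alg_{\ell(\cP\otimes_{BV}\cQ)}(\cS)\to\Alg_{\ell(\cP)\otimes\ell(\cQ)}(\cS)$ is an equivalence, the target is $\Alg_{\ell(\cP)}(\Alg_{\ell(\cQ)}(\cS))$ by the defining property of Lurie's tensor product, and a first application of Pavlov--Scholbach identifies the source with the $\infty$-category underlying $\Alg_{\cP\otimes_{BV}\cQ}(\sSet)=\Alg_\cP(\Alg_\cQ(\sSet))$. But your final step --- a second, nested application of rectification to $\cP=o(F)$ over $\cC=\Alg_\cQ(\sSet)$ --- is exactly the step that does not go through, and the paper says so explicitly: ``we cannot use the result of Pavlov--Scholbach as $\Alg_\cP(\cC)$ is not a monoidal model category.'' The obstacle you flag as a verification to be carried out is in fact false: $\Alg_\cQ(\sSet)$ with the projective model structure and the cartesian product is in general not a (simplicial) symmetric monoidal model category --- the pushout-product axiom fails, and products of projectively cofibrant algebras need not be cofibrant --- so the hypotheses of Theorem~\ref{thm:rect} simply do not hold at the nested stage. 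A separate inaccuracy: your assertion that the Boardman--Vogt tensor product of $\Sigma$-free operads is again $\Sigma$-free is false in general, by the Eckmann--Hilton argument ($\Ass\otimes_{BV}\Ass\cong\Com$); what saves the first rectification step is that $\cP=o(F)$ is \emph{free} on a forest, whose colors are pairwise distinct along each tree, which blocks the interchange collapse.

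The paper's actual proof replaces your nested rectification with a direct inductive comparison, and it uses freeness of $\cP$ (not merely $\Sigma$-freeness) in an essential way. Writing $L\cC=\Alg_{\ell(\cQ)}(\cS)$, one endows $\Alg_\cP(\cC)$ with the projective model structure and proves that $L(\Alg_\cP(\cC))\to\Alg_{\ell(\cP)}(L\cC)$ is an equivalence by induction on the number of corollas of $F$: decomposing $F=F_1\cup^r F_2$ with $F_2$ a corolla gives $\Alg_\cP(\cC)=\Alg_{\cP_1}(\cC)\times_{\cC^k}\Alg_{\cP_2}(\cC)$, and Lemma~\ref{lem:correctfp} guarantees that the forgetful functor from the corolla piece induces a fibration of the simplicial categories of fibrant-cofibrant objects, so that the strict fiber product computes the fiber product of the underlying $\infty$-categories in $\Cat$; since the same decomposition holds by definition for $\Alg_{\ell(\cP)}(L\cC)$, the induction closes. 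To repair your proposal you would have to supply an argument of this kind (or some other substitute for rectification over a non-monoidal model category); as written, the central step fails.
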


\begin{proof}
By the reconstruction theorem,  it is sufficient
to verify that the map $\theta:\ell(\cP)\otimes\ell(\cQ)\to\ell(\cP\otimes_{BV}\cQ)$
induces an equivalence of the categories of algebras
$$
\theta^*:\Alg_{\ell(\cP\otimes_{BV}\cQ)}(\cS)\to\Alg_{\ell(\cP)\otimes\ell(\cQ)}(\cS).
$$

By the rectification theorem  the left-hand side
is the $\infty$-category underlying the model category 
\begin{equation}
\label{eq:algsset}
\Alg_{\cP\otimes_{BV}\cQ}(\sSet)=\Alg_\cP(\Alg_\cQ(\sSet)),
\end{equation}
whereas, by definition, the right-hand side is
\begin{equation}
\label{eq:algspaces}
\Alg_{\ell(\cP)}(\Alg_{\ell(\cQ)}(\cS)).
\end{equation}

We denote $\cC=\Alg_\cQ(\sSet)$. This is a simplicial model category 
whose underlying $\infty$-category is $L\cC:=\Alg_{\ell(\cQ)}(\cS)$. We have the 
localization functor $L:\cC\to L\cC$ and we have to verify that
the natural map
$
\Alg_\cP(\cC)\to\Alg_{\ell(\cP)}(L\cC)
$
induces an equivalence 
\begin{equation}
\label{eq:equivalence}
L(\Alg_\cP(\cC))\to\Alg_{\ell(\cP)}(L\cC).
\end{equation}
Recall that  $\cP=o(F)$. We endow $\Alg_\cP(\cC)$
with the projective model structure. Note that we cannot use the 
result of Pavlov-Scholbach~\cite{PS} as $\Alg_\cP(\cC)$ is not a 
monoidal model category.
It is easy to see that a map $f:A\to A'$ in $\Alg_\cP(\cC)$ is a fibration iff its restriction
to any corolla of $F$ is a fibration.
Therefore, the simplicial category $\Alg_\cP(\cC)^{cf}_*$
of fibrant cofibrant algebras is the (naive) fiber product of the
simplicial categories of algebras over the corollas contained in $\cP$. To simply the formulas,
we will proceed by induction on the number of corollas in $F$.

We can write $F=F_1\cup^vF_2$ (pruning/grafting) where $F_2$ is a corolla containing a root of $F$, $F_1$ is a (smaller) forest and $v=\{v_1,\ldots,v_k\}$ is a subset of
the set of leaves of $F_2$ whose elements are identified in $F$ with 
roots of $F_1$.
This decomposition yields a decomposition of operads
$\cP=\cP_1\sqcup^v\cP_2$ where $\cP_j=i(F_j)$ for $j=1,2$. We have
$$
\Alg_\cP(\cC)=\Alg_{\cP_1}(\cC)\times_{\cC^k}\Alg_{\cP_2}(\cC)
$$
where the functors $g_i:\Alg_{\cP_i}(\cC)\to\cC^k$ are given by the evaluation at $v$. 
According to Lemma~\ref{lem:correctfp} the functor $g_2:\Alg_{\cP_2}(\cC)\to\cC$ induces a fibration
of the corresponding simplicial categories of fibrant-cofibrant objects. So, applying the functor of homotopy coherent nerve, we get a decomposition
$$
L(\Alg_\cP(\cC))=L(\Alg_{\cP_1}(\cC))\times_{L(\cC)^k}
L(\Alg_{\cP_2}(\cC)),
$$ 
where one of the structure maps is a categorical fibration of quasicategories, so it represents the fiber product in $\Cat$.
Since, by definition, the same decomposition holds for $\Alg_\cP(L\cC)$,
we deduce that (\ref{eq:equivalence}) is an equivalence by induction on the number of corollas in $\cP$.
 
\end{proof}

\subsubsection{Proof of Theorem~\ref{thm:SMequivalence}}
The diagram
\begin{equation}
\xymatrix{
&\Phi\ar^Y[d] \ar^i[r] & \LOp\\
&P(\Phi)\ar^L[r]&\DOp\ar_\lambda[u] 
}
\end{equation}
with $Y$ the Yoneda embedding, is commutative by~\ref{prp:psiisi}.
The composition $i_!=\lambda\circ L$ preserves colimits. By
\ref{prp:i-coop} the functor
$i:\Phi\to\LOp$ has a canonical extension to a map of anti-operads.
We will show that $i_!$ canonically extends  to a map of colax 
symmetric monoidal categories with colimits. If 
$u:\boldsymbol\Phi\to^\otimes\!\!\hat{\boldsymbol\Phi}$ is the symmetric monoidal envelope of $\boldsymbol\Phi$, 
the map of anti-operads $i:\boldsymbol\Phi\to^\otimes\!\LOp$ canonically extends to
a symmetric monoidal functor $\hat{\boldsymbol\Phi}\to\LOp$ that gives, 
by Lemma~\ref{lem:Day-univ}, a colimit preserving symmetric monoidal functor
$\Upsilon:P(\hat{\boldsymbol\Phi})\to\LOp$. The composition 
of $\Upsilon$ with $u_!:P(\Phi)\to P(\hat{\boldsymbol\Phi})$
yields a map of colax SM categories
 extending $i_!$,
see~\ref{sss:presheaves}.
Since $\Upsilon\circ u_!$ $=i_!=\lambda\circ L$ carries operadic equivalences to equivalences, 
it factors through a symmetric monoidal functor from $\DOp$ to $\LOp$ extending $\lambda$. This proves the theorem.

\subsection{} We  present below, for the convenience of the reader,
a diagram presenting some important categories and functors
appearing in the paper.

\begin{equation}
\label{eq:thediagram}
\xymatrix{
&\F \ar_{\mathrm{Yoneda}}[d]\ar^\omega[r] &\Phi\ar^{o\quad}[r]\ar_j[dl]\ar^i[dr]
&\Op(\Set)\ar^\ell[d]\\
&P(\F)& & \LOp\ar@{_{(}->}[ll]
}
\end{equation}

\appendix
\section{Shuffles of trees}

The category $\dSet$ of (set-valued) presheaves on $\Omega$ carries
the ``operadic'' model structure already mentioned in~\ref{sss:dop} above and having $\DOp$ as the underlying $\infty$-category. The tensor product on $\dSet$ does not make it a monoidal model category, however, because,
for instance, the functors $S\otimes\_$ where $S$ is a fixed tree,
do not preserve cofibrations (see~\cite{HM}, Section 4.3, for a discussion of this point). 
The smaller category $\odSet$ of presheaves on {\sl open}
trees does have a homotopically well-behaved tensor product, see~\cite{HHM}, Section 6.3.
In this appendix we explain how some of these good homotopical properties
of the open trees extend to arbitrary trees. This will imply that 
$\DOp$ is a symmetric monoidal $\infty$-category by the argument presented in Section~\ref{sec:monoidal}.

\subsection{Terminology} Recall from \ref{sss:dop} the category $\Omega$
of  trees.   For a tree $S$ we 
denote the set of its maximal edges, i.e., its leaves and stumps, by
$$\max(S).$$
A tree is open if it has no stumps. For an arbitrary tree $S$ we write $S^\circ\to S$ for its  ``interior'', obtained by chopping off the stumps. So, $S^\circ\to S$ is bijective on edges. (Warning: the assignment of $S^\circ$ to $S$ is not functorial.) If $e$ is a leaf of $S$, we denote by $S[\bar e]$ the tree obtained by adding a stump on top of $e$. We will also use the similar notation $S[\bar E]$ for a set $E$ of leaves of $S$. For example,
$$
	\begin{tikzpicture} 
	[level distance=8mm, 
	every node/.style={fill, circle, minimum size=.12cm, inner sep=0pt}, 
	level 1/.style={sibling distance=10mm}, 
	level 2/.style={sibling distance=10mm}, 
	level 3/.style={sibling distance=10mm}]
	
	\node(anchorR1)[style={color=white}] {} [grow'=up] 
	child {node(vertexR1)[draw] {} 
		child
		child
		child{node {}
			child
			child{node {}}
			}
	};
	
	\node[style={color=white}, right=3cm of anchorR1] {} [grow'=up] 
	child {node(vertexR2)[draw] {} 
		child
		child
		child{node {}
			child{node {}}
			child{node {}}
			}
	};
	
	\node[style={color=white}, right=6cm of anchorR1] {} [grow'=up] 
	child {node(vertexR3)[draw] {} 
		child
		child{node {}}
		child{node {}
			child{node {}}
			child{node {}}
			}
	};
	
	\node[style={color=white}, right=9cm of anchorR1] {} [grow'=up] 
	child {node(vertexR4)[draw] {} 
		child
		child
		child{node {}
			child
			child
			}
	};
	
	\tikzstyle{every node}=[]
	
	\node at ($(vertexR1) + (-.2cm,.5cm)$) {$d$};
	\node at ($(vertexR1) + (.5cm, 1cm)$) {$e$};
	\node at ($(vertexR1) + (0,-1.5cm)$) {$S$};
	
	\node at ($(vertexR2) + (.5cm, 1cm)$) {$e$};
	\node at ($(vertexR2) + (0,-1.5cm)$) {$S\left[\overline{e}\right]$};
	
	\node at ($(vertexR3) + (0,-1.5cm)$) {$S\left[\overline{\{d,e\}}\right]$};
	
	\node at ($(vertexR4) + (0,-1.5cm)$) {$S^\circ$};
	
	\end{tikzpicture}
$$
The edge $e$ corresponds to a map $e:\eta\to S$ from the unit tree $\eta$, and $e$ extends to a map $\bar e:\bar\eta\to S[\bar e]$ where
$\bar\eta=C_0$ is the null-corolla. The tree $S[\bar e]$ is the grafting
$S\circ_e{\bar e}$ of $C_0$ onto $S$ at $e$, and the map
$$
S\sqcup^e\bar e\to S[\bar e]
$$
is a weak equivalence (a Segal map) in the model structure mentioned above. For a set $E$ of leaves, we have a similar weak equivalence
\begin{equation}
\label{eq:graftebar}
S\sqcup^E\bar E\to S[\bar E]
\end{equation}
where $S\sqcup^E\bar E$ denotes the pushout of $\sqcup_{e\in E}\eta\to S$ along $\sqcup_{e\in E}\eta\to\sqcup_{e\in E}\bar\eta$.

\subsection{Shuffles}

The $n$-fold tensor product $S_1\otimes\ldots\otimes S_n$ of a sequence of trees $S_1,\ldots, S_n$ is a union of ``shuffles'' 
(see~\cite{HM}, Section 4.4).
A shuffle $A\to S_1\otimes\ldots\otimes S_n$ is a tree whose edges are (labeled by) $n$-tuples of edges $(e_1,\ldots,e_n)$ of $S_1,\ldots,S_n$,
respectively. Not all such tuples will occur in a particular shuffle. But for us, it is important to note that the root of a shuffle $A$
is the $n$-tuple $(r_1,\ldots,r_n)$ of roots, and the set of maximal edges is {\sl exactly} the set of $n$-tuples of maximal edges in $S_i$;
that is, 
\begin{equation}
\label{eq:max}
\max(A)=\prod_{i=1}^n\max(S_i).
\end{equation}

The fact that $S_1\otimes\ldots\otimes S_n=\bigcup_{j\in J}A_j$ is the union of its
shuffles can be expressed as a finite colimit
$$
S_1\otimes\ldots\otimes S_n=\colim A_\alpha,
$$
where $\alpha$ ranges over  non-empty subsets of $J$, and
$$
A_\alpha=\bigcap _{j\in\alpha}A_j
$$
is the corresponding intersection of shuffles. Each such finite
intersection $A_\alpha$ has property (\ref{eq:max}) and each map $A_\alpha\to A_\beta$ for $\beta\subseteq\alpha\subseteq J$ is an inner face map, in fact, a map obtained by contracting a set of edges
other than the maximal edges or the root.  

The structure of the set of shuffles does not depend on the stumps that
might occur in the trees $S_i$. More precisely, for a leaf $e$ in $S_i$, there is a bijective correspondence between the shuffles of 
$S_1\otimes\ldots\otimes S_n$ and of $S_1\otimes\ldots\otimes S_i[\bar e]\otimes\ldots\otimes S_n$, given by $A\mapsto A[\bar E_i]$,
where $E_i=\{(d_1,\ldots,d_n)|d_i=e, d_j\textrm{ are all leaves of }S_j\}$. 
The same applies to the intersections of shuffles
$A_\alpha\mapsto A_\alpha[\bar E_i]$.

In particular, for trees $S_1,\ldots,S_n$ the tensor product
$S_1\otimes\ldots\otimes S_n$ can be reconstructed from the tensor product of their interiors $S^\circ_1,\ldots,S^\circ_n$: precisely, if
$S^\circ_1\otimes\ldots\otimes S^\circ_n=\bigcup_jA_j=\colim A_\alpha$,
then
 $S_1\otimes\ldots\otimes S_n=\bigcup_jA_j[\bar E]=\colim A_\alpha[\bar E]$,
 where 
$$
E=\{(e_1,\ldots,e_n)|\textrm{ each }e_j\textrm{ is a leaf of }
S_j^\circ,\textrm{ at least one}\ e_i\textrm{ is a stump of }S_i\}.
$$
From this observation one easily deduces the following lemma.
\begin{lem}
For trees $S_1,\ldots,S_n$ and the set $E$ as above, the map
$$
(S_1^\circ\otimes\ldots\otimes S_n^\circ)\sqcup^E\bar E\to
S_1\otimes\ldots\otimes S_n
$$
is a weak equivalence.
\end{lem}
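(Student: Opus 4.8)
The plan is to deduce the statement directly from the two colimit presentations recorded just above it. Writing $\alpha$ for a non-empty subset of the index set $J$ of shuffles and $A_\alpha=\bigcap_{j\in\alpha}A_j$, we have $S_1^\circ\otimes\ldots\otimes S_n^\circ=\colim_\alpha A_\alpha$ and $S_1\otimes\ldots\otimes S_n=\colim_\alpha A_\alpha[\bar E]$, the structure maps being the inner face maps $A_\alpha\to A_\beta$ for $\beta\subseteq\alpha$. By (\ref{eq:max}) every $A_\alpha$ has the same maximal-edge set $\prod_i\max(S_i^\circ)$, and these maximal edges are genuine leaves since each $S_i^\circ$ is open; moreover, as noted above the statement, the inner face maps in the diagram contract only edges other than the maximal ones, so they fix all of $\max(A_\alpha)$. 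Consequently $E$ sits inside the leaves of each $A_\alpha$ compatibly with the structure maps, which gives a constant subdiagram $\bigsqcup_{e\in E}\eta\to A_\alpha$; since grafting $\bar E$ is a pushout along this inclusion and pushouts commute with colimits, I would first record the identification $(S_1^\circ\otimes\ldots\otimes S_n^\circ)\sqcup^E\bar E=\colim_\alpha(A_\alpha\sqcup^E\bar E)$.

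Next I would compare the two diagrams termwise. For each $\alpha$ the edges of $E$ are leaves of $A_\alpha$, so the grafting weak equivalence (\ref{eq:graftebar}) applied with $S=A_\alpha$ produces a weak equivalence $A_\alpha\sqcup^E\bar E\to A_\alpha[\bar E]$. These are natural in $\alpha$ and assemble into a natural transformation between the diagram $\{A_\alpha\sqcup^E\bar E\}$ and the diagram $\{A_\alpha[\bar E]\}$ which is a levelwise weak equivalence, and, via the identification of the previous paragraph and the universal property of the colimit, the map induced on colimits is precisely the map in the lemma.

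The hard part will be passing from a levelwise weak equivalence to a weak equivalence of the colimits, i.e.\ showing that both colimits compute \emph{homotopy} colimits. I would exploit the shape of the index: it is the punctured-cube (\v{C}ech) diagram of a finite cover by subobjects, all of whose structure maps are monomorphisms between normal, hence cofibrant, objects of $\dSet$, and are therefore cofibrations in the operadic model structure underlying $\DOp$. Since that model structure is left proper, I would run an induction on the number of shuffles $|J|$, building the union one shuffle at a time: one has $\bigcup_{j\le k}A_j=\bigl(\bigcup_{j<k}A_j\bigr)\sqcup_{\bigcup_{j<k}(A_j\cap A_k)}A_k$, a pushout of cofibrations whose three corners are again unions of shuffle-intersections of strictly smaller complexity. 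The termwise weak equivalences identify these corners on the two sides, so left properness together with the lower-cardinality cases of the induction yields a weak equivalence of the pushouts, and hence of the full colimits. This gives the map of the lemma as a weak equivalence. The only genuine subtlety I anticipate is bookkeeping: verifying that the intersections $\bigcup_{j<k}(A_j\cap A_k)$ inherit the same maximal-edge set (so that grafting $\bar E$ keeps commuting with all the colimits at every stage of the induction), which follows once more from (\ref{eq:max}) and the fact that inner faces fix maximal edges.
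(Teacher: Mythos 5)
Your proposal is correct and follows the paper's own proof almost verbatim: the same presentation of both sides as colimits over the punctured cube of finite intersections of shuffles $A_\alpha$, the same commutation of the grafting pushout $\sqcup^E\bar E$ with that colimit, and the same termwise weak equivalences $A_\alpha\sqcup^E\bar E\to A_\alpha[\bar E]$ from (\ref{eq:graftebar}), made compatible in $\alpha$ via (\ref{eq:max}) and the fact that the inner face maps fix maximal edges. The only divergence is at the final step, where the paper simply observes that both diagrams are Reedy cofibrant (so their colimits are homotopy colimits and the levelwise equivalence passes to the colimits), whereas you re-derive this by induction on the number of shuffles via pushout decompositions and the gluing lemma --- a valid expansion of the same point, for which left properness is not actually needed, since all the subobjects occurring are normal (hence cofibrant) and the legs of your pushouts are normal monomorphisms, i.e.\ cofibrations.
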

\begin{proof}
Write $S_1\otimes\ldots\otimes S_n$ as the colimit $\colim A_\alpha$
of the diagram of finite intersections of shuffles. This is a Reedy
cofibrant diagram, so $S_1^\circ\otimes\ldots\otimes S_n^\circ$ is
also the homotopy colimit. Then 
$
(S_1^\circ\otimes\ldots\otimes S_n^\circ)\sqcup^E\bar E
$
is the colimit of the diagram of the $A_\alpha$ together with the inclusions
$$
\bar\eta\leftarrow\eta\stackrel{e}{\to}A_\alpha
$$
for each $e$ in $E$ and each $\alpha$. This colimit is the same as the colimit of the pushouts $A_\alpha\sqcup^E\bar E$. But $A_\alpha\sqcup^E\bar E\to A_\alpha[\bar E]$ is a weak equivalence for each $\alpha$, see~(\ref{eq:graftebar}), and $S_1\otimes\ldots\otimes S_n=\colim_\alpha
A_\alpha[\bar E]$ is again the homotopy colimit of the corresponding Reedy cofibrant diagram. Therefore, the weak equivalences  
$A_\alpha\sqcup^E\bar E\to A_\alpha[\bar E]$ for different $\alpha$ yield the one in the lemma.
\end{proof}
\subsection{Segal condition}
Consider trees $S_1,\ldots, S_n$ and a further tree $T$. Let $d$ be an 
inner edge in $T$. Cutting the tree $T$ at $d$ results in two trees 
$T^d$ and $T_d$ of which $T=T^d\circ_dT_d$ is a grafting.

\begin{prp}
\label{prp:app-segal}
The map 
$$
S_1\otimes\ldots\otimes S_n\otimes(T^d\cup T_d)\to
S_1\otimes\ldots\otimes S_n\otimes T
$$
is a weak equivalence in the operadic model structure on $\dSet$.
\end{prp}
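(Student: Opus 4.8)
The plan is to reduce the assertion to the case of open trees, where the tensor product is homotopically well behaved, and then to reattach the stumps using the reduction lemma proved just above. Throughout write $S=S_1\otimes\ldots\otimes S_n$, and interpret $T^d\cup T_d$ as the pushout $T^d\sqcup^d T_d$ glued along the edge $d$, which is a leaf of $T^d$ and the root of $T_d$. Since $\otimes$ preserves colimits in each variable, the source decomposes as $S\otimes(T^d\sqcup^d T_d)=(S\otimes T^d)\sqcup^{S}(S\otimes T_d)$. One cannot argue directly that tensoring the Segal equivalence $T^d\sqcup^d T_d\to T$ with $S$ is again an equivalence, because $S\otimes(-)$ fails to preserve cofibrations once stumps are present; circumventing this is precisely the purpose of the reduction to open trees.

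The open case is immediate. Suppose first that all of $S_1,\ldots,S_n$ and $T$ (hence also $T^d$ and $T_d$) are open. Then we may work inside $\odSet$, whose tensor product underlies a monoidal model structure by \cite{HHM}, Section 6.3. Every tree is cofibrant, so $S$ is cofibrant and $S\otimes(-)$ is a left Quillen functor. The grafting map $T^d\sqcup^d T_d\to T$ is a Segal (inner anodyne) weak equivalence between cofibrant objects, so its image under $S\otimes(-)$ is again a weak equivalence by Ken Brown's lemma. As a weak equivalence between open dendroidal sets in the structure of \cite{HHM} is in particular a weak equivalence in $\dSet$, this settles the open case.

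For general trees I pass to interiors. Applying the reduction lemma above to $S_1\otimes\ldots\otimes S_n\otimes T$ identifies the target, up to weak equivalence, with $(S_1^\circ\otimes\ldots\otimes S_n^\circ\otimes T^\circ)\sqcup^{E}\bar E$, where $E\subset\prod_i\max(S_i)\times\max(T)$ is the set of tuples of maximal edges at least one of which is a stump; here I use that $\max$ is unchanged by passing to interiors and that, by (\ref{eq:max}), the maximal edges of each shuffle are exactly such tuples. The crucial point is that the cut edge $d$ is an \emph{inner} edge, hence never maximal, so it is disjoint from the loci where stumps are attached. Consequently, under the identification $\max(T)=\max(T^d\sqcup^d T_d)$, the same set $E$ describes the stumps to be attached to the source, and the reduction equivalences are natural with respect to the leaf inclusion $d\hookrightarrow T^d$ and the root inclusion $d\hookrightarrow T_d$. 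Assembling these over the pushout presentation of the source yields a commutative square
$$
\xymatrix{
\bigl(S^\circ\otimes((T^d)^\circ\sqcup^d(T_d)^\circ)\bigr)\sqcup^{E}\bar E \ar[r]\ar[d]_{\sim} & (S^\circ\otimes T^\circ)\sqcup^{E}\bar E\ar[d]^{\sim}\\
S\otimes(T^d\sqcup^d T_d)\ar[r] & S\otimes T
}
$$
whose vertical maps are reduction equivalences and whose top arrow is the open Segal equivalence of the previous paragraph (for the trees $S_i^\circ$ and $T^\circ$) with the same stumps $E$ attached on both sides. Attaching stumps is a pushout along the cofibration $\bigsqcup_{e\in E}\eta\to\bigsqcup_{e\in E}\bar\eta$ of (\ref{eq:graftebar}), so by left properness it preserves the top weak equivalence; the bottom arrow is then an equivalence by two-out-of-three.

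The main obstacle is the bookkeeping in the last paragraph, concentrated in the left-hand vertical map. One must verify that the reduction equivalence of the preceding lemma, which is stated for a single tensor product of trees, is natural enough to be glued along the pushout presentation $S\otimes(T^d\sqcup^d T_d)=(S\otimes T^d)\sqcup^{S}(S\otimes T_d)$ so as to produce a reduction equivalence for this non-representable source, and that the stump set attached there genuinely coincides with the set $E$ used for the target. Both points reduce to the single geometric fact that the grafting takes place along the inner edge $d$, which is disjoint from every maximal edge where stumps live; once this disjointness is used to run the shuffle and Reedy-cofibrancy argument of the reduction lemma on the pushout diagram (controlling normality of the tensors $S\otimes T^d$, $S\otimes T_d$ so that the relevant legs are cofibrations and the gluing lemma applies), the left vertical is an equivalence and the square above closes.
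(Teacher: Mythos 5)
Your overall strategy---reduce to the open case via the reduction lemma for interiors and then reattach stumps---is the same as the paper's, but your pivotal step is wrong as stated, and a degenerate case is missing. The ``crucial point'' (disjointness of the cut edge from the stump loci) is true only for the \emph{target}: since $d$ is inner in $T$, no maximal edge of $S\otimes T$ involves $d$, so $E$ avoids the cut. But you reduce the \emph{source} piecewise along $S\otimes(T^d\sqcup^d T_d)=(S\otimes T^d)\sqcup^{S}(S\otimes T_d)$, and in $T^d$ the edge $d$ \emph{is} a leaf, hence maximal, while $\max(\eta_d)=\{d\}$. So the reduction lemma applied to $S\otimes T^d$ and to $S=S\otimes\eta_d$ attaches stumps at all tuples $(e_1,\ldots,e_n,d)$ with some $e_i$ a stump---exactly over the gluing locus, not disjoint from it. The left vertical of your square therefore cannot be assembled by the naturality you invoke: under the root-side map $S\to S\otimes T_d$, a freely attached stump at $(e_1,\ldots,e_n,d)$ must map to a nontrivial composite dendrex (by the Boardman--Vogt interchange for nullary operations, an ``early'' cap at $(e_1,\ldots,e_n,d)$ equals a copy of $T_d$ in that color capped at its maximal tuples), and one needs a cancellation argument in the pushout to see that these extra stumps are absorbed, leaving precisely the stumps indexed by $E$ via $\max(T)=\bigl(\max(T^d)\setminus\{d\}\bigr)\sqcup\max(T_d)$. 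Relatedly, the cofibrancy you defer at the end (``controlling normality of $S\otimes T^d$, $S\otimes T_d$'') is not a technicality: failure of normality of tensor products in the presence of stumps is the very reason $\dSet$ is not a monoidal model category, so the gluing must be run through the Reedy-cofibrant diagrams of intersections of shuffles, as in the lemma's proof, rather than through abstract left properness.

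Second, your argument silently assumes that $d$ remains an inner edge after passing to interiors. This fails exactly when the vertex immediately above $d$ is a stump, i.e.\ $T_d=\bar\eta$: then $(T_d)^\circ=\eta$, $T^\circ=(T^d)^\circ$, and $d$ is a \emph{leaf} of $T^\circ$, so ``the open Segal equivalence for $S_1^\circ,\ldots,S_n^\circ$ and $T^\circ$'' is ill-posed---your top arrow degenerates to an isomorphism rather than an instance of \cite{HHM}, Lemma 6.3.5, and the claimed identification $\max(T)=\max(T^d\sqcup^d T_d)$ needs separate scrutiny there. The paper isolates exactly this case and treats it by a different argument: the map becomes $S_1\otimes\ldots\otimes S_n\otimes(T^d\sqcup^d\bar d)\to S_1\otimes\ldots\otimes S_n\otimes T$ with $T=T^d[\bar d]$, which is recognized as another instance of the reduction lemma with stump set $D=\{(e_1,\ldots,e_n,d)\mid e_i\ \textrm{a leaf of}\ S_i\}$. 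As written, your proof does not cover this case, and the generic case stands only after the overlap-and-cancellation bookkeeping above replaces the false disjointness claim.
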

\begin{proof}
The claim is known to hold if all the trees are open, see~\cite{HHM}, Lemma 6.3.5.
 The general case immediately follows from the lemma above, at least if $d$ itself is not a stump in $T$. If it is, 
$T_d=\bar\eta$  and $T=T^d[\bar\eta]$, and the map
in the proposition is
$$
S_1\otimes\ldots\otimes S_n\otimes(T^d\sqcup^d\bar d)\to
S_1\otimes\ldots\otimes S_n\otimes T.
$$
But in this case 
$$
S_1\otimes\ldots\otimes S_n\otimes(T^d\sqcup^d\bar d)\stackrel{\sim}{=}
(S_1\otimes\ldots\otimes S_n\otimes T^d)\sqcup^D\bar D
$$
where $D=\{(e_1,\ldots,e_n,d)|e_i \textrm{ is a leaf in }S_i\}$,
and the proposition becomes a special case of the lemma again.
\end{proof}
\subsection{Associativity}

The tensor product of dendroidal sets is not associative. For example, cosider trees $R,S$ and $T$. If $S\otimes T$ is a union of shuffles, say, $S\otimes T=\bigcup A_j$, then $R\otimes(S\otimes T)=\bigcup R\otimes A_j$ is a union of only a subset of shuffles making up $R\otimes S\otimes T$, see \cite{HM}, Section 4.4. More generally, if $S_1\otimes
\ldots\otimes S_n=\bigcup_{j\in J}A_j$ as above, then the map
\begin{equation}
\label{eq:asso-ij}
S_1\otimes\ldots\otimes(S_i\otimes\ldots\otimes S_j)\otimes\ldots\otimes S_n\longrightarrow S_1\otimes\ldots\otimes S_n
\end{equation}
is an inclusion of the form $\bigcup_{k\in K}A_k\to\bigcup_{j\in J}A_j$ where $K\subseteq J$.
\begin{prp}
\label{prp:app-asso}
Let $S_1,\ldots,S_n$ be trees. Then for any $1\leq i<j\leq n$ the map
(\ref{eq:asso-ij}) is a weak equivalence, and similarly for more
nested bracketings.
\end{prp}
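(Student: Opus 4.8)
The plan is to reduce the statement to the case of open trees, exactly as in the proof of Proposition~\ref{prp:app-segal}. Write $S_1\otimes\ldots\otimes S_n=\bigcup_{j\in J}A_j$ as the union of its shuffles, so that the map (\ref{eq:asso-ij}) is the inclusion $\bigcup_{k\in K}A_k\hookrightarrow\bigcup_{j\in J}A_j$ of a sub-union indexed by some $K\subseteq J$. The first point I would record is that the index set $J$, the distinguished subset $K$, and the poset of intersections $\{A_\alpha\}$ are all \emph{independent of the stumps}: under the bijection $A\mapsto A[\bar E]$ between the shuffles of $S_1^\circ\otimes\ldots\otimes S_n^\circ$ and those of $S_1\otimes\ldots\otimes S_n$, the bracketing selects the same subset $K$ in both cases, since the condition determining which shuffles occur in a given bracketing is purely combinatorial and does not see the stumps. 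Thus, writing $A_k^\circ$ for the shuffle of the interiors corresponding to $A_k$, the open associativity map is the inclusion $\bigcup_{k\in K}A_k^\circ\hookrightarrow\bigcup_{j\in J}A_j^\circ$ with the \emph{same} $J$ and $K$.

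Next I would extend the lemma preceding Proposition~\ref{prp:app-segal} from the full tensor product to an arbitrary sub-union of shuffles. Its proof applies verbatim: for $U=\bigcup_{k\in K}A_k$ one has $U=\colim_\alpha A_\alpha[\bar E]$ over the (Reedy cofibrant) diagram of nonempty subsets $\alpha\subseteq K$, while $(\bigcup_{k\in K}A_k^\circ)\sqcup^E\bar E=\colim_\alpha(A_\alpha^\circ\sqcup^E\bar E)$, and the per-shuffle weak equivalences $A_\alpha^\circ\sqcup^E\bar E\to A_\alpha[\bar E]$ of (\ref{eq:graftebar}) assemble into a weak equivalence of homotopy colimits; the set $E$ is the same for every sub-union, since by (\ref{eq:max}) every shuffle has the same maximal edges. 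Applying this to $K$ and to $J$ produces a commutative square whose two vertical arrows
$$
\Bigl(\bigcup_{k\in K}A_k^\circ\Bigr)\sqcup^E\bar E\to\bigcup_{k\in K}A_k,\qquad
\Bigl(\bigcup_{j\in J}A_j^\circ\Bigr)\sqcup^E\bar E\to\bigcup_{j\in J}A_j
$$
are weak equivalences, whose bottom arrow is the map (\ref{eq:asso-ij}), and whose top arrow is the open associativity map pushed out along $\sqcup_E\eta\to\sqcup_E\bar\eta$. The open associativity map is a weak equivalence: in the open case the complementary shuffles are glued in along inner faces only (contracting inner edges, never maximal edges or the root), so the inclusion is inner anodyne, which is the open case treated in~\cite{HHM}. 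Since it is in particular a cofibration, the gluing lemma shows that its pushout, the top arrow of the square, is again a weak equivalence, and two-out-of-three then forces the bottom arrow (\ref{eq:asso-ij}) to be a weak equivalence.

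The statement for more deeply nested bracketings follows by induction, applying the same reduction one pair of brackets at a time. I expect the only genuinely delicate point to be the first one: verifying that a given bracketing picks out literally the same index subset $K$ for the trees and for their interiors, i.e.\ that the shuffle combinatorics governing associativity are insensitive to the presence of stumps. Everything downstream — the sub-union version of the stump lemma, the gluing-lemma step, and the induction — is then a routine transcription of the corresponding steps in the proof of Proposition~\ref{prp:app-segal}.
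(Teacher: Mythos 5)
Your proof is correct and follows essentially the same route as the paper's: reduce to the open case via \cite{HHM}, Lemma 6.3.6, write both sides as colimits of Reedy cofibrant diagrams of intersections of shuffles, push out along $\sqcup_E\eta\to\sqcup_E\bar\eta$, and conclude from the grafting weak equivalences (\ref{eq:graftebar}) and two-out-of-three in the same square the paper draws. The ``same $K$'' point you flag as delicate is exactly what the paper's preliminary discussion supplies, where the bijection $A\mapsto A[\bar E]$ and its compatibility with intersections $A_\alpha\mapsto A_\alpha[\bar E]$ show the shuffle combinatorics are insensitive to stumps.
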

\begin{proof}
This map is a weak equivalence (in fact, inner anodyne) if all the $S_i$
are open, and the same holds for more nested bracketings, see~\cite{HHM}, Lemma 6.3.6. So for general trees $S_1,\ldots, S_n$ the map
$$
S^\circ_1\otimes\ldots\otimes(S^\circ_i\otimes\ldots\otimes S^\circ_j)\otimes\ldots\otimes S^\circ_n\longrightarrow S^\circ_1\otimes\ldots\otimes S^\circ_n
$$
is a weak equivalence. As before, we can write this map as
$\colim A_\beta\to\colim A_\alpha$ where $\beta\subseteq K$ and $\alpha\subseteq J$ are nonempty  subsets, and the colimits are the colimits of Reedy cofibrant diagrams. Taking the pushout along 
$\sqcup_E\eta\to\sqcup_E\bar\eta$ for $E$ as before yields the top map in the diagram
\begin{equation}
\xymatrix{
&\colim A_\beta\sqcup^E\bar E \ar^\sim[r]\ar[d]
&\colim A_\alpha\sqcup^E\bar E \ar[d]\\
&\colim A_\beta[\bar E] \ar[r]&\colim A_\alpha[\bar E]  
}
\end{equation}
The vertical maps are colimits of grafting weak equivalences of the form (\ref{eq:graftebar}),
so the bottom map is a weak equivalence as well. But this is precisely the map in the proposition.
\end{proof}

\end{document}